\documentclass[onefignum,onetabnum]{siamart220329}

\usepackage{latexsym}
\usepackage{amssymb,amsbsy,amsmath,amsfonts,amssymb,amscd}
\usepackage{mathrsfs,hyperref}
\usepackage{epsfig}
\graphicspath{{./figures/}}
\usepackage[margin = 1 in]{geometry}

\usepackage{amsfonts}
\usepackage{graphicx}
\usepackage[caption=false]{subfig}
\usepackage{amsmath}
\usepackage{amssymb}
\usepackage{amsmath}
\usepackage{algorithm}
\usepackage{algorithmicx}
\usepackage{algpseudocode}
\usepackage{bm}
\usepackage{dsfont}
\usepackage{mathtools}
\usepackage{comment}
\usepackage{cases}
\usepackage{cleveref}
\newcommand{\rd}{\mathrm{d}}

\theoremstyle{plain}
\theoremstyle{plain}\newtheorem{remark}{Remark}

\newcommand{\RR}{\mathbb{R}}
\newcommand{\In}{\textrm{in}}
\newcommand{\dt}{\Delta t}
\newcommand{\dx}{\Delta x}
\newcommand{\dv}{\Delta v}
\newcommand{\fij}{f_{i,j}}
\newcommand{\fijm}{f_{i-1,j}}
\newcommand{\fijp}{f_{i+1,j}}
\newcommand{\gij}{g_{i,j}}
\newcommand{\gijm}{g_{i-1,j}}
\newcommand{\gijp}{g_{i+1,j}}
\newcommand{\np}{{m+1}}

\newcommand{\var}{\textrm{Var}}
\newcommand{\bbE}{\mathbb{E}}

\newcommand{\cL}{\mathcal{L}}
\newcommand{\cP}{\mathcal{P}}
\newcommand{\cT}{\mathcal{T}}
\newcommand{\frakG}{\mathfrak{G}}

\newcommand{\numf}{\mathsf{f}}
\newcommand{\nume}{\mathsf{e}}
\newcommand{\numfo}{\mathsf{f}_1}
\newcommand{\numg}{\mathsf{g}}
\newcommand{\norm}[1]{ \| #1 \|}
\newcommand{\Lop}{{\mathcal L}}

\newcommand{\average}[1]{ \langle#1 \rangle}

% %%% Infrastructure    
% \makeatletter
% \newcommand{\refcheckize}[1]{%
%   \expandafter\let\csname @@\string#1\endcsname#1%
%   \expandafter\DeclareRobustCommand\csname relax\string#1\endcsname[1]{%
%     \csname @@\string#1\endcsname{##1}\wrtusdrf{##1}}%
%   \expandafter\let\expandafter#1\csname relax\string#1\endcsname
% }
% \makeatother
% %%% add the reference commands we want refcheck to be aware of
% \refcheckize{\cref}
% \refcheckize{\Cref}

\graphicspath{{Figures/}}

%\title{Particle Method for Optimization Constrained by Radiative Transport Equation}
% 

\title{Monte Carlo gradient in optimization constrained by radiative transport equation \thanks{\funding{Q.~Li is supported in part by NSF-DMS-1750488 and WARF-Wisconsin. L.~Wang is partially supported by NSF grant DMS-1846854. Y.~Yang acknowledges support from Dr.~Max R\"ossler, the Walter Haefner Foundation, and the ETH Z\"urich Foundation. All three authors have participated in ``Geometric Methods in Optimization and Sampling," held at the Simons Institute for the Theory of Computing in Fall 2021, and ``Frontiers in kinetic theory: connecting microscopic to macroscopic scale" held at the Isaac Newton Institute (UK) in Spring 2022, where the work was initiated. This work was also partially supported by a grant from the Simons Foundation.}}}
% This work was also
% partially supported by a grant from the Simons Foundation, through a long term program ``Frontiers in kinetic theory: connecting microscopic to macroscopic scale" held at the Isaac Newton Institute (UK) in Spring 2022.
% Part of the work was completed while all three authors were visiting the Simons Institute for the Theory of Computing in Fall 2021. This work was also
% partially supported by a grant from the Simons Foundation, through a long term program ``Frontiers in kinetic theory: connecting microscopic to macroscopic scale" held at the Isaac Newton Institute (UK) in Spring 2022.}}}

\author{Qin Li\thanks{Department of Mathematics, University of Wisconsin-Madison, Madison, WI (\email{qinli@math.wisc.edu}).}
\and Li Wang\thanks{Department of Mathematics, University of Minnesota Twin Cities, Minneapolis, MN (\email{wang8818@umn.edu}).}
\and Yunan Yang\thanks{Institute for Theoretical Studies, ETH Z\"urich, Z\"urich, Switzerland (\email{yunan.yang@eth-its.ethz.ch}).}
}

% Custom SIAM macro to insert headers
\headers{}{Qin Li, Li Wang, and Yunan Yang}

\date{\today}

\begin{document}

\maketitle
\begin{abstract}
Can Monte Carlo (MC) solvers be directly used in gradient-based methods for PDE-constrained optimization problems? In these problems, a gradient of the loss function is typically presented as a product of two PDE solutions, one for the forward equation and the other for the adjoint. When MC solvers are used, the numerical solutions are Dirac measures. As such, one immediately faces the difficulty in explaining the multiplication of two measures. This suggests that MC solvers are naturally incompatible with gradient-based optimization under PDE constraints. In this paper, we study two different strategies to overcome the difficulty. One is to adopt the Discrete-Then-Optimize technique and conduct the full optimization on the algebraic system, avoiding the Dirac measures. The second strategy stays within the Optimize-Then-Discretize framework. We propose a correlated simulation where, instead of using MC solvers separately for both forward and adjoint problems, we recycle the samples in the forward simulation in the adjoint solver. This frames the adjoint solution as a test function, and hence allows a rigorous convergence analysis. The investigation is presented through the lens of the radiative transfer equation, either in the inverse setting from optical imaging or in the optimal control framework. We detail the algorithm development, convergence analysis, and complexity cost. Numerical evidence is also presented to demonstrate the claims.
% In this paper, we design two Monte Carlo-based algorithms for computing gradients in the optimization tasks constrained by the radiative transport equation. We use the adjoint state method to treat the constraint and follow either the optimize-then-discretize or discretize-then-optimize approaches. At first sight, the Monte Carlo method is not suitable for gradient calculation as the gradient often depends quadratically on the solutions of the forward and adjoint equations. Thereby it is hard to make sense of the product of two empirical measures, which comes out as the representation of particle methods. We overcome this difficulty by solving the adjoint equation in tandem with the forward equation instead of evolving two independent sets of particles, and as a result, it largely mitigates the total computational cost and memory requirement. We also prove the convergence of the Monte Carlo solution as well as the Monte Carlo gradient, as the number of particles goes to infinity, in an appropriate weak topology. As an application, we consider both the inverse and optimal control problems and contrast our gradient with the one obtained by the conventional finite volume scheme. 
\end{abstract}

\begin{keywords}
Monte Carlo gradient, particle method, radiative transport equation, adjoint-state method
\end{keywords}

\begin{MSCcodes}
65C05, 65K10, 65M75, 82C70
\end{MSCcodes}
% 65K10 Numerical optimization and variational techniques
% 65C05,Monte Carlo methods
% 65M75 Probabilistic methods, particle methods, etc. for initial value and initial-boundary value problems involving PDEs
% 82C70  Transport processes in time-dependent statistical mechanics

\section{Introduction}

Monte Carlo (MC) method is a class of computational strategies that use random samples/particles to represent the underlying distributions. It is immensely popular in numerical problems with high dimensions due to its practical and theoretical advantages. It is simple to code up and converges robustly with a convergence rate independent of the dimension~\cite{caflisch1998monte} in certain metrics. One class of such high-dimensional problems is introduced by kinetic equations whose state variables reside in seven-dimensional phase space. Parallel to developing mesh-based method, Direct Simulation Monte Carlo (DSMC) and its extensions~\cite{bird1970direct, nanbu1980direct, bobylev2000theory,pareschi2001introduction,kratzer2009monte,pareschi2013interacting,huang2020mean} have garnered a great amount of interests.
%One high dimensional problem is introduced by kinetic equations whose state variables reside on a seven-dimensional phase space. Within the MC framework, Direct Simulation Monte Carlo (DSMC) and its extensions have been prevalent~\cite{bird1970direct, nanbu1980direct, bobylev2000theory,pareschi2001introduction,kratzer2009monte,pareschi2013interacting} in simulating kinetic equations. Despite the vast development in the deterministic mesh-based methods in the past decade, the Monte Carlo method has gained renewed interest lately due to the current growing need for machine learning-related tasks. 

PDE-constrained optimization is a classical formulation to solve inverse and control problems. In these problems, the parameters are adjusted accordingly to either fit the reference data or achieve a certain desired property. In the updating process, gradients of the loss function are computed in each iteration. This usually translates to computing two PDEs, one is the original forward equation and the other being the adjoint PDE. When these PDEs are presented on high dimensions, a prohibitive computational cost may incur.

To save numerical cost, we naturally face the option of utilizing MC solvers for gradient-based PDE-constrained optimizations. Then a natural question arises: can they be applied directly? We investigate this problem using kinetic equations as the underlying PDE. An overarching message we would like to deliver in this paper is that, at least in the case of kinetic equations,

\medskip
\begin{center}
\emph{MC solvers are not immediately compatible with gradient-based optimization strategies.}
\end{center}

\medskip
At the core of the difficulty is the incompatibility of the MC perspective and the computation of the gradients. Indeed, the MC solver views the PDE solution as a linear combination of Dirac delta functions, each representing one particle. The gradients derived for the PDE-constrained optimization problems, however, typically constitute a quadratic form that requires the multiplication of two PDE solutions (one forward and one adjoint). Mathematically, it is challenging to make sense of a multiplication of two delta measures. Physically, this incompatibility simply comes from the fact that two independent particles, formulated from forward and adjoint procedures, do not share trajectories, and there is an ambiguity in tracing the information. As such, a change of perspective is needed to integrate MC PDE solvers into the PDE-constrained optimization. Since MC solvers only provide convergence to the PDEs in the weak sense, we have to evaluate the convergence through the dual perspective, and all the rigorous proof must be done with exceedingly careful calculation in the observable space~\cite{babovsky1989convergence}.

This shift of perspective will be demonstrated in this paper through the lens of the radiative transfer equation (RTE), a classical kinetic equation~\cite{chandrasekhar2013radiative}. Kinetic equation is a class of equations describing the dynamics of many interacting particles. It is widely used in statistical mechanics to characterize the evolution of the distribution function of particles before it achieves the equilibrium state. It sits between thermo/fluid dynamics that describe macroscopic evolution and molecular dynamics that focus on fine-scale motions. Kinetic equations are practically useful in engineering and physics, and they also carry some unique mathematical features. Computationally, MC methods become a natural class of candidate due to the many-body nature of the system. Samples are drawn from the initial distribution and then moved around according to the physical law prescribed by the equation to simulate the evolution of the whole distribution. Monte Carlo strategy has been successfully applied to the study the RTE (for photons), the Boltzmann equation (for rarefied gas), and the Vlasov--Fokker--Planck equation (for plasma), among many others~\cite{bird1970direct,caflisch1998monte,hirvijoki2013monte,dimarco2018asymptotic}. In the optimization setting, various MC solvers are designed in~\cite{hochuli2015forward,powell2017radiance}. In our paper, we choose RTE because the equation is linear with a clear collisional spectrum, and the well-posedness of the associated inverse problem forms a well-founded base for us to focus on the algorithmic aspect~\cite{mccormick1992inverse,klose2002optical1,klose2002optical2,bal2009inverse,dimarco2018asymptotic,Chen_2018,Chen_2018_online}.

Our proposal consists of two strategies. One is the Optimize-Then-Discretize (OTD) approach. It starts with the original optimization problem and derives the gradient of the loss function. This gradient can usually be written as a product of two PDE solutions. To resolve the incompatibility between MC solvers and the gradient computation, we propose to, instead of directly using MC solvers to compute both forward and adjoint equations separately, simulate only the forward equation using MC, and record the trajectory for propagating the adjoint information backward in time. A similar strategy was applied to the Boltzmann equation constrained optimization in~\cite{caflisch2021adjoint}. Such algorithmic design is effortless to execute and will be proved to be theoretically sound, in the sense that it still honors the law of large numbers. This way respects the physical intuition for the role of the adjoint solver (propagating information back to the origin of change), and also avoids the artificial mathematical difficulty in understanding the product of two Dirac delta measures. These features are utilized in the rigorous justification of the computation method. 

The other approach falls in the Discretize-Then-Optimize (DTO) framework~\cite{betts2005discretize}. It discretizes the forward PDE first and presents the PDE solution using MC particles. The objective function and the constraints are represented only by these particles. Therefore, the full system is already discrete, and the adjoint variables are particles with a one-to-one correspondence to the MC particles from the forward PDE particle solution~\cite{caflisch2021adjoint}. The entirely discrete optimization problem removes the incompatibility issue mentioned above since the derived numerical scheme for the adjoint variables is always consistent with the forward discretization within this DTO framework. This MC gradient technique can be further applied to rejection sampling~\cite{Blei2017,mohamed2019monte,yang2022adjoint}, which also applies to the MC method for the forward RTE. We will leave rigorous proof of this approach for future research.
%\ql{should we say that we leave the rigorous proof for this approach to future project?}\yy{I have changed.}

Despite the differences, the OTD and DTO approaches designed in our work for computing the RTE gradient share some similar traits. First, both of them only need to simulate one set of particles for the forward RTE while the solution to the adjoint equation is obtained for free. Consequently, they have the same memory requirement. Second, the most expensive part of obtaining the gradient is computing the discrete integrals using disordered particles. 
% The OTD is slightly more costly as three integrals need to be calculated (see \eqref{eqn:fn}--\eqref{eqn:fgn}) compared to the one integral in the DTO approach (see \eqref{eq:DTO_grad}). 
These two approaches are equivalent under proper discretization schemes~\cite{hager2000runge,burkardt2002insensitive,caflisch2021adjoint}.
%Theoretically, the two approaches are also equivalent in the limit when the number of particles goes to infinity. Such equivalence has been presented in some earlier works
%\ql{need more} 

% \ql{are we really comfortable saying this?}
% \yy{I have changed.}

%\yy{I think they are the same cost in general. OTD needs for-loops over both space and time while DTO only needs for-loops over space. However, the sorting in DTO is $NM$ scale, $\mathcal{O}(NM (\log N + \log M))$, while the sorting in OTD is $M$ times $N$-scale, $\mathcal{O}(NM (\log N))$. The sorting in DTO can be reduced to $\mathcal{O}(NM (\log N))$ at the cost of for-loops over both space and time. Overall, they are the same.}

We now quickly summarize the equation and the setup in~\Cref{subsec:setup}. In~\Cref{sec:OTD}, we employ the OTD approach. We will review an MC solver for the RTE, present the failure of its direct extension to compute the gradient and propose our fix. The associated rigorous numerical analysis is presented in~\Cref{sec:NA}, including both the convergence of the MC solver for the forward RTE (see~\Cref{alg:f-RTE}) and the convergence of the gradient (see~\Cref{alg:P-OTD}).~\Cref{sec:DTO} is dedicated to the DTO approach, and~\Cref{alg:P-DTO} will be developed for computing the RTE-constrained optimization gradient within the DTO framework. Numerical evidences are presented in~\Cref{sec:tests}.
% to demonstrate the accuracy of the designed algorithms for gradient calculation with the one obtained from the finite-volume scheme as the reference. 
\subsection{Equation and Setup} \label{subsec:setup}
RTE is a model problem for simulating light propagation in an optical environment~\cite{chandrasekhar2013radiative}. For exposition simplicity, we restrict ourselves to the time-dependent RTE with no boundary effect,
\begin{equation}\label{eqn:RTE}
\begin{cases}
\partial_t f +v\cdot\nabla_xf &= \, \sigma(x)\mathcal{L}[f], \\
\quad f(t=0,x,v) &= \, f_\In (x,v),
\end{cases}\qquad x\in \RR^{d_x}, ~ v \in \Omega\,.
\end{equation}
Here, $f(t,x,v)$ is the distribution function of photon particles at time $t$ on the phase space $(x,v)$. The left-hand side of the equation describes the photon moving in a straight line in $x$ with velocity $v$, whereas the right-hand side characterizes the photon particles' interaction with the media characterized by the function $\sigma(x)$. The term $\mathcal{L}[f]$ writes:
% \begin{equation}\label{eq:f_v}
% \mathcal{L}[f] = \langle f\rangle_v - f\,,\quad\text{with}\quad \langle f\rangle_v = \frac{1}{|\Omega|} \int_\Omega f(x,v)\rd{v}\,.
% \end{equation}
\begin{equation}\label{eq:f_v}
\mathcal{L}[f] = \frac{1}{|\Omega|} \langle f\rangle_v - f\,,\quad\text{with}\quad \langle f\rangle_v =  \int_\Omega f(x,v)\rd{v}\,.
\end{equation}
The term $\sigma(x) \mathcal{L}[f]$ represents that particles at location $x$ have a probability proportional to $\sigma(x)$ to be scattered, into a new direction uniformly chosen in the $v$ space. As a result, the distribution function in the phase space exhibits a gain on $|\Omega|^{-1}\langle f\rangle_v(x)$ and a loss on $f(x,v)$. Throughout the paper, we use $\langle\cdot\rangle_\ast$ to denote the integration with respect to the variable $\ast$, e.g., $\langle\cdot \rangle_v$, $\langle\cdot\rangle_{xv}$ and $\langle\cdot\rangle_{txv}$ with the Lebesgue measure.

The forward problem~\eqref{eqn:RTE} has a unique solution under very mild conditions on both $\sigma(x)$ and the initial data $f_\In$; see for instance \cite{dautray1993mathematical,bal2006radiative}. %\cite[Section 3]{egger2016class}. \lw{better references?}
Moreover, the equation preserves mass because $\rho=\langle f\rangle_{xv}$ is a constant in time. Without loss of generality, we set $\rho=1$ throughout the paper.

RTE is widely used as the forward model in inverse/control problems. One example of an inverse problem sits under the general umbrella of optical imaging~\cite{klose2002optical2,powell2017radiance, herty2007optimal}. In an experiment, one shines light into a domain with unknown optical properties and measures the light intensity that comes out of the domain. The measurements are then used to infer the optical property of the domain interior. This inverse problem is widely used in medical imaging and remote sensing, albeit the frequency of light is adjusted accordingly~\cite{ren2006frequency}. Another example in control theory using RTE as the forward model is lens design~\cite{mashaal2016aplanatic}. By adjusting the optical properties of the manufactured lens, light can be bent in the desired manner. In both cases, it is the optical parameter in the RTE that needs to be determined. We now examine the corresponding computational methods to solve such problems.

We frame our problem using the approach of PDE-constrained optimization. The objective function, respectively, is the mismatch of the simulated light intensity and the measured data, or the desired light output, and the constraints come from the fact that the solution needs to satisfy a forward RTE. Without loss of generality, we assume the measurement is the final-time solution of the RTE,
\[
d(x,v) \approx f(t=T,x,v)\,,
\]
then the corresponding PDE-constrained optimization reads:
\begin{equation}\label{eqn:min}
% \begin{aligned}
\min_\sigma J(\sigma),\quad \text{with}\,\, J(\sigma) =\frac{1}{2}\Big\langle| f_\sigma(t=T,x,v) - d(x,v) |^2\Big\rangle_{xv},
% &\text{s.t.}& \partial_t f +v\cdot\nabla_xf = \sigma(x)\mathcal{L}[f]\,,\\
% && f(t=0,x,v) = f_\In(x,v)\,.
% \end{aligned}
\end{equation}
where $f_\sigma(t,x,v)$ is the simulated data that solves~\eqref{eqn:RTE} with the given initial condition and absorbing parameter $\sigma(x)$. 
%The form of objective function in~\eqref{eqn:min} is just one example while other proper functionals of $f_\sigma(t,x,v)$ could be used here.  
We will write $f_\sigma(t,x,v)$ as $f(t,x,v)$ hereafter for the simplicity of notation. The goal is to adjust $\sigma$ so that the simulated data is as close to the true measured data $d(x,v)$ as possible. Here we use the standard $L^2$ norm to measure ``closeness,'' but it shall be generalized to other metrics. In real-life problems, it is typical that the measurement is ``intensity'' ($\langle f\rangle_v$) instead of the solution profile $f$. Additionally, it is very typical to add a regularization term to mitigate the effects of noise and improve the convexity of the optimization problem. The derivation below can be extended easily to deal with all these variations.

The integration of MC methods and this constrained optimization problem will be presented from two perspectives in the rest of the paper. On the one hand, we can follow the OTD approach and derive the Fr\'echet derivative of $J$ with respect to $\sigma$. This will come down to simulating two PDEs (one forward and one adjoint RTE), and the MC solver needs to be properly applied. On the other hand, we also take the DTO approach and start by reformulating~\eqref{eqn:min} into a discrete form based on the MC solver. The gradient and optimization are then conducted entirely on this algebraic system. These two pathways will be presented in~\Cref{sec:OTD,sec:NA}, and~\Cref{sec:DTO}, respectively.

\section{Optimize-Then-Discretize Framework}\label{sec:OTD}
The OTD framework is the most straightforward numerical strategy for solving PDE-constrained minimization problems. This is to derive a Lagrangian to eliminate the constraints and adopt a minimization method, per users' choice, to handle the resulting unconstrained optimization problem. This minimization strategy is performed directly on the PDE level, and the formulation is written in a continuous setting. When gradient-based optimization methods are used, one typically needs to compute the Fr\'echet derivatives for updates. Discretization is then performed to simulate the PDE and approximate the Fr\'echet derivative for the final execution of the algorithm. We detail the procedure below.

We first apply the method of Lagrange multipliers and transfer the original constrained  optimization formulation~\eqref{eqn:min} into an unconstrained one:
\[
\min_{\sigma,f,g,\lambda}\mathfrak{L}(f,g,\lambda,\sigma)\,,
\]
where
\[
\mathfrak{L}(f,g,\lambda,\sigma) =  J(f) + \langle g\,,\partial_t f +v\cdot\nabla_x f -  \sigma(x)\mathcal{L}[f]\rangle_{txv} + \langle \lambda \,,f(t=0,x,v) - f_\In(x,v)\rangle_{xv}\,.
\]
Here, functions $g(t,x,v)$ and $\lambda(x,v)$ are Lagrange multipliers with respect to the RTE solution $f(t,x,v)$ for $t>0$ and the initial condition $f(t=0,x,v)$, respectively. It is common to refer to $f(t,x,v)$ as the state variable and $g(t,x,v)$ as the adjoint variable in PDE-constrained optimizations. With integration by parts, we rewrite the Lagrangian as:
\[
\begin{aligned}
\mathfrak{L}(f,g,\lambda,\sigma) = & J(f)  - \langle f\,, -\partial_t g +v\cdot\nabla_xg + \sigma(x)\mathcal{L}[g]\rangle_{txv} + \langle \lambda\,,f(t=0,x,v) - f_\In(x,v)\rangle_{xv}\\
& + \langle g(t=T,x,v)\,, f(t=T,x,v)\rangle_{xv} - \langle g(t=0,x,v)\,, f(t=0,x,v)\rangle_{xv} \,.
\end{aligned}
\]
Setting the variation of $\mathfrak{L}$ with respect to $f$ to be $0$, we obtain the adjoint equation:
% In the framework of PDE-constrained optimization, it is typical to set $\frac{\delta\mathfrak{L}}{\delta f}=0$ to derive the equations for the adjoint variables to ensure the optimality condition held, and then only descend in the $\sigma$ direction. To do so, we take the derivative of the Lagrangian $\mathfrak{L}$ with respect to $f$ and set the derivative to be $0$ for all $x,v,t$. This gives the adjoint equation:
\begin{equation}\label{eqn:g_eqn}
\begin{cases}
- \partial_tg  - v\cdot\nabla_xg &= \sigma(x)\mathcal{L}[g], \\
\qquad g(T,x,v) &=  - \frac{\delta J}{\delta f}(T,x,v)= d(x,v) - f(T,x,v)\,,
\end{cases}
\end{equation}
where the final condition at $T$ comes from the form of $J$ given in~\eqref{eqn:min}. Then the Fr\'echet derivative of $\mathfrak{L}$ with respect to the function $\sigma$ has the form:
% \begin{equation}\label{eqn:frechet_derivative}
% \frac{\delta\mathfrak{L}}{\delta\sigma}(x) = - \int_{v,t} f\mathcal{L}[g]\rd{v}\rd{t} = - |\Omega| \left( \int_t \langle f\rangle_{v}\langle g\rangle_{v}\rd{t} -   \int_t \langle fg\rangle_v\rd{t} \right),
% \end{equation}
\begin{align}\label{eqn:frechet_derivative}
\mathfrak{G}&:= \frac{\rd \mathfrak{L}}{\rd \sigma} 
= \frac{\delta \mathfrak{L}}{ \delta f} \frac{\rd f}{\rd \sigma } + 
\frac{\delta \mathfrak{L}}{ \delta g} \frac{\rd g}{\rd \sigma }  +
\frac{\delta \mathfrak{L}}{ \delta \lambda} \frac{\rd \lambda}{\rd \sigma }  + \frac{\delta\mathfrak{L}}{\delta\sigma} \nonumber
\\ &=
\frac{\delta\mathfrak{L}}{\delta\sigma}(x) = - \int_{v,t} f\mathcal{L}[g]\rd{v}\rd{t} = \int_t \langle fg\rangle_v\rd{t}  - \frac{1}{|\Omega|}\int_t \langle f\rangle_{v}\langle g\rangle_{v}\rd{t}  =: \mathfrak{G}_1 -   |\Omega|^{-1}\mathfrak{G}_2\,,
\end{align}
where the terms $\frac{\delta \mathfrak{L}}{ \delta f}$, $\frac{\delta \mathfrak{L}}{ \delta g}$ and $\frac{\delta \mathfrak{L}}{ \delta \lambda}$ vanish since $f$ and $g$ solve~\eqref{eqn:RTE} and~\eqref{eqn:g_eqn}, respectively. Here,  $\mathfrak{G}_1 =  \int_t \langle fg\rangle_v\rd{t}$ and $\mathfrak{G}_2 = \int_t \langle f\rangle_{v}\langle g\rangle_{v}\rd{t}$.

%\ql{what is this for?}We remark that the final condition of the adjoint equation~\eqref{eqn:g_eqn} is unique up to  a scalar multiplication as it depends on how the Lagrangian multiplier is added. Correspondingly, the derivative formula~\eqref{eqn:frechet_derivative} may also be different up to a constant scaling. However, the resulting derivative $\frac{\delta\mathfrak{L}}{\delta\sigma}(x)$ is always unique independent of the scaling constants in~\eqref{eqn:g_eqn} and~\eqref{eqn:frechet_derivative}. 
%\lw{maybe delete this paragraph?}\yy{I am fine with deleting it (I wrote it)}

The derivation above is carried out completely on the function space in the continuous setting, as done in the OTD framework. Here, we endow the functional space for the parameter $\sigma(x)$ with the $L^2$ inner product, so the derivative $\frac{\delta\mathfrak{L}}{\delta\sigma}(x)$ is indeed the $L^2$ gradient. As such, we use ``gradient'' and ``derivative'' interchangeably. Upon obtaining \eqref{eqn:frechet_derivative}, the next step is to find a discrete approximation to it. In particular, we represent the unknown parameter function $\sigma(x)$ using a finite-dimensional object, such as a piecewise polynomial function on a compact domain, and replace $f$ and $g$ by their associated numerical representations. As a result, the numerical error purely comes from the computation of~\eqref{eqn:RTE}, \eqref{eqn:g_eqn} and the derivative-assembling~\eqref{eqn:frechet_derivative}.

A handful of numerical strategies can be used to solve the constraint PDE~\eqref{eqn:RTE}, such as finite difference, finite element, and spectral methods~\cite{lewis1984computational, alldredge2012high,li2017implicit}. In this paper, we focus on the Monte Carlo method and investigate the possibility of using it to compute the gradient~\eqref{eqn:frechet_derivative}.

\subsection{Monte Carlo Method for $f$} \label{sec:MC-f}
In this subsection, we first review the MC method \cite{pareschi2001introduction,dimarco2018asymptotic} used in solving the forward problem \eqref{eqn:RTE}. It serves as the base of our computation. Though intuitive, the proof has not been thoroughly presented in the literature. We give a theoretical guarantee for the convergence of this method in this section.

Using MC to solve \eqref{eqn:RTE} amounts to representing the solution $f$ as an ensemble of many particles:
\begin{equation}\label{eq:f_n_approx}
f(t,x,v) \approx \frac{1}{N}\sum_{n=1}^N \delta\left(x- {x_n(t)} \right) \delta\left(v- {v_n(t)} \right)=: \numf_N(t,x,v)\,,
\end{equation}
where $(x_n(t),v_n(t))$ denote the $n$-th particle's location and velocity, respectively. Since~\eqref{eqn:RTE} has a natural interpretation of particle interaction, it is straightforward to single out the particle dynamics. More precisely, 
\begin{itemize}
    \item{Term $v\cdot \nabla_xf$:} It is a transport term suggesting that particles should move with velocity $v$, and therefore we set
    \[
    \dot{x}_n=v_n\,.
    \]
    \item{Term $\sigma(x)\mathcal{L}[f]$:} This term indicates that, with intensity $\sigma(x)$, particles interact with the media and adopt a new velocity, which is uniformly chosen from the velocity domain $\Omega$. This is a reminiscent of the Poisson process in that for all $s-t<p\sim \text{Pois}(e^{-\sigma(x_n)})$, $v_n(s)=v_n(t)$, and when $s=t+p$, $v_n$ switches to
    \[
    v_n(t+p) = \eta\sim \mathcal{U}(\Omega)\,,
    \]
    where $\mathcal{U}$ stands for a uniform distribution.
\end{itemize}
These understandings prompt the following formulation for~\Cref{alg:f-RTE}, see also~\cite{dimarco2018asymptotic}.

\begin{algorithm}
\caption{Monte Carlo Method for Solving the Forward RTE~\eqref{eqn:RTE}\label{alg:f-RTE}}
\begin{algorithmic}[1]
\State  \textbf{Preparation:} $N$ pairs of samples $\{(x_n^0,v_n^0)\}_{n=1}^N$, sampled from the initial distribution $f_\In(x,v)$; the total time steps $M$ and the time interval $\Delta t$ so that $T = M \Delta t$; and the parameter function $\sigma(x)$.
\For{$m=0$ to $M-1$}
\State Given $\{(x_n^m,v_n^m)\}_{n=1}^N$, set $x_n^{m+1} = x_n^m + \Delta t \,  v_n^m$, $n=1,\ldots,N$.
% \State Compute $\alpha_i^{n+1} = \exp(-\sigma(x_n^{m+1}) \Delta t)$.
\State Draw random numbers $\{p^{m+1}_n\}_{n=1}^N$ from the uniform distribution $\mathcal{U}([0,1])$.
\If{ $p_n^{m+1} \geq \alpha_n^{m+1} = \exp(-\sigma(x_n^{m+1}) \Delta t)$}
\State Set $v_n^{m+1} = \eta_n^{m+1}$ where $\eta_n^{m+1}\sim\mathcal{U}(\Omega)$.
\Else 
\State Set $v_n^{m+1} = v_n^{m}$.
\EndIf
\EndFor
\end{algorithmic}
\end{algorithm}

As written, all particles $(x^m_n,v^m_n)$ are independent of each other, so we drop the sub-index $n$, and only keep $m$ representing the time step. From each time step to the next, two random variables are involved. One is the rejection sampling conducted through $p^{m+1}$. The other is the uniform sampling of $\eta^{m+1}$. We denote the expectation with respect to these two variables as $\mathbb{E}_{p^{m+1}}$ and $\mathbb{E}_{\eta^{m+1}}$, respectively.

We will prove that~\Cref{alg:f-RTE} provides an accurate solver for~\Cref{eqn:RTE} in the weak sense. In particular, consider the test function set:
\begin{equation}\label{eq:test_fcts}
%  \Phi =  C_c^\infty(\mathbb{R}^d \times \Omega) \,.
 \Phi  = C_c^\infty(\mathbb{R}^{d_x} \times \Omega)\,,%\Big\{\phi\in  C_c^\infty(\mathbb{R}^d \times \Omega): \|\phi\|_{W^{2,\infty}_{x,v}} \leq L_\Phi \Big\}\,,
\end{equation}
we will show that the error,  when tested against any $\phi\in\Phi$, is well controlled  in the sense of both the expectation  and the law of large numbers (LLN). More specifically, let
\begin{equation}\label{eqn:error_phi}
    \mathsf{e}_{N,\phi}^m := \langle
    \numf^m-f(t^m,x,v)\,,\phi\rangle_{xv}=\frac{1}{N}\sum_{n=1}^N \phi(x^m_n, v^m_n) - \int f(t^m,x,v)\phi(x,v) \rd x \rd v \,,
\end{equation}
denote the error term. It can be viewed as a dual norm associated with the function space $\Phi$, which metricizes the weak convergence. We may use the short-hand notation $f(t^m)$ to denote $f(t^m,x,v)$ hereafter. %For our specific choice of $\Phi$, this norm can be viewed as a generalization 
The quantity defined in~\eqref{eqn:error_phi} is related to the flat norm defined in \cite[Example 8.8]{peyre2019computational}. In principle, the fact that $\mathsf{e}_{N,\phi}^m$ converges to zero for all $\phi\in\Phi$ as $N\rightarrow\infty$ is equivalent to the weak convergence of $\mathsf{f}^m$ to $f(t^m)$\,.

For different realizations of $\{x_n^m\,,v_n^m\}$, the value $\mathsf{e}_{N,\phi}^m$ changes accordingly, and thus is a stochastic process on a probability space spanned by random variables $\{p^m,\eta^m\}_{m=1}^M$, and hence naturally generates a filtration. To stress the $m$ dependence, we sometimes denote
%\ql{$\eta$ and $p$ uses upper-index for time.}
\[ %\label{eq:expectation_n}
\bbE^m  = \begin{cases}
    \mathbb{E}_{(x^0,v^0)}, & m = 0,\\
    \mathbb{E}_{(x^0,v^0)}\bbE_{\eta^1}\bbE_{p^1} \cdots \mathbb{E}_{\eta^{m}}\mathbb{E}_{p^{m}}, & m \geq 1\,,
    \end{cases}
\]
as taking the expectation up to the filtration at time $t^m = m\dt$. In the following, $\mathbb{E}^m[ \mathsf{e}^m_{N,\phi}]$ and $\mathbb{E}[\mathsf{e}^m_{N,\phi}]$ are used interchangeably. We will show:
\begin{theorem}\label{thm:f_conv_summary}
Let $\numf$ be produced by~\Cref{alg:f-RTE} with $T>0$, $M\in\mathbb{N}$ and $\dt = T/M$. Then for any $\phi\in\Phi$, the error $\mathsf{e}_{N,\phi}^M$:
\begin{itemize}
    \item is first order in time in expectation:
    \begin{equation} \label{eq:f_convergence_exp_summary}
    \bbE| \mathsf{e}_{N,\phi}^M|=\mathcal{O}(\dt )\,;
    \end{equation}
    \item has an exponential concentration bound in $N$. Namely, for all $\epsilon>0$:
\begin{equation} \label{eq:f_convergence_summary}
    \mathbb{P}\left(|\mathsf{e}_{N,\phi}^M | \geq \epsilon + \dt \right) \lesssim  \exp \left(-N\epsilon^2\right)\,.
\end{equation}
\end{itemize}
\end{theorem}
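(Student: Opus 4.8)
The plan is to split the tested error into a deterministic (bias) part and a mean-zero stochastic (fluctuation) part,
\[
\nume_{N,\phi}^M = \Big(\tfrac1N\sum_{n=1}^N\phi(x_n^M,v_n^M) - \bbE[\phi(x^M,v^M)]\Big) + \Big(\bbE[\phi(x^M,v^M)] - \langle f(t^M),\phi\rangle_{xv}\Big),
\]
where in the second bracket I use that the $N$ particles are i.i.d., so the expectation of the empirical average equals the single-particle expectation $\bbE[\phi(x^M,v^M)]$. The first bracket is controlled by concentration of i.i.d.\ averages (giving the $N$-dependence and the LLN), and the second bracket is a pure time-discretization error that I would bound by a consistency-plus-stability argument (giving the $\cO(\dt)$).

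For the bias, I would first compute the one-step transition operator of a single particle: conditioning on $(x^m,v^m)=(x,v)$, the next position is $x+\dt v$ and the velocity is kept with probability $\alpha=e^{-\sigma(x+\dt v)\dt}$ and otherwise redrawn from $\mathcal{U}(\Omega)$, so for smooth $\psi$,
\[
(\cT_{\dt}\psi)(x,v) = e^{-\sigma(x+\dt v)\dt}\,\psi(x+\dt v,v) + \big(1-e^{-\sigma(x+\dt v)\dt}\big)\tfrac{1}{|\Omega|}\average{\psi(x+\dt v,\cdot)}_v.
\]
Taylor expanding in $\dt$ gives $\cT_{\dt}\psi = \psi + \dt\,(v\cdot\nabla_x\psi + \sigma\,\cL[\psi]) + \cO(\dt^2)$, i.e.\ $\cT_{\dt}$ is consistent to second order with the semigroup generated by the adjoint operator $\cL^\ast := v\cdot\nabla_x + \sigma\cL$ (note $\cL$ is self-adjoint in $v$). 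I would then introduce the backward solution $u(t,x,v)$ of $\partial_t u + \cL^\ast u = 0$ with terminal data $u(T)=\phi$; duality for the RTE gives $\langle f(t^M),\phi\rangle_{xv} = \langle f_{\In}, u(0)\rangle_{xv}$, while the tower property gives $\bbE[\phi(x^M,v^M)] = \langle f_{\In},\, \cT_{\dt}^{\,M}\phi\rangle_{xv}$. Hence the bias equals $\langle f_{\In},\, \cT_{\dt}^{\,M}\phi - u(0)\rangle_{xv}$. Writing $u^m:=u(t^m)$, consistency yields $u^m = \cT_{\dt}u^{m+1} + \tau^m$ with local truncation error $\norm{\tau^m}_\infty=\cO(\dt^2)$, whereas the discrete value function $\cT_{\dt}^{\,M}\phi$ propagates exactly with matching terminal value $u^M=\phi$. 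The stability ingredient is that $\cT_{\dt}$ is a Markov (averaging) operator, hence a sup-norm contraction $\norm{\cT_{\dt}\psi}_\infty\le\norm{\psi}_\infty$; telescoping backward from $m=M$ to $m=0$ then gives $\norm{\cT_{\dt}^{\,M}\phi-u(0)}_\infty \le \sum_{m=0}^{M-1}\norm{\tau^m}_\infty = M\cdot\cO(\dt^2)=\cO(\dt)$. Pairing against $f_{\In}$ and using mass preservation $\norm{f_{\In}}_{L^1}=1$ gives $|\text{bias}|=\cO(\dt)$, the systematic part of the first assertion; the residual fluctuation satisfies $\bbE\big|\tfrac1N\sum_n\phi(x_n^M,v_n^M)-\bbE\phi(x^M,v^M)\big|\le\sqrt{\var(\phi)/N}=\cO(N^{-1/2})$ by independence and boundedness of $\phi$.

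For the concentration statement, the fluctuation term is an average of $N$ i.i.d.\ random variables $\phi(x_n^M,v_n^M)$, each bounded by $\norm{\phi}_\infty$ since $\phi\in C_c^\infty$. Hoeffding's inequality gives $\mathbb{P}\big(|\text{fluctuation}|\ge\epsilon\big)\le 2\exp\!\big(-N\epsilon^2/(2\norm{\phi}_\infty^2)\big)\lesssim\exp(-N\epsilon^2)$. Combining with the deterministic bound $|\text{bias}|\le C\dt$ through a triangle-inequality step yields $\mathbb{P}(|\nume_{N,\phi}^M|\ge\epsilon+\dt)\lesssim\exp(-N\epsilon^2)$, after absorbing the constant $C$ into the implied constants of the statement.

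I expect the main obstacle to be the consistency estimate $\norm{\tau^m}_\infty=\cO(\dt^2)$, because it requires uniform-in-$m$ control of the derivatives of the backward solution $u$ (notably second-order $x$-derivatives entering through $(v\cdot\nabla_x)^2$ in the $\cO(\dt^2)$ remainder). This amounts to a propagation-of-regularity result for $\cL^\ast$ on $[0,T]$: the smoothness of $\phi$ and $\sigma$ must be shown to persist, with constants growing at most with $T$ and the derivatives of $\sigma,\phi$ but staying uniform in $\dt$. Establishing these bounds, and making the velocity domain $\Omega$ and the boundedness of $\cL$ interact cleanly with the $x$-transport, is the technical heart of the argument.
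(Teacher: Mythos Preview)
Your proposal is correct and follows the same overall strategy as the paper (consistency-plus-stability for the bias, concentration for the fluctuation), but the execution differs in two ways worth noting. First, for the bias you introduce the backward solution $u$ of $\partial_t u + \cL^\ast u = 0$ with terminal data $\phi$ and use the sup-norm contraction of the Markov operator $\cT_{\dt}$ as the stability mechanism; the paper instead works directly with the error $e_\phi^m=\bbE[\mathsf{e}_{1,\phi}^m]$ and derives a recursion $e_\phi^{m+1}\le e_{(I+\dt\cP)\phi}^m + C_1\dt^2$ in which the \emph{test function itself} is propagated backward by the first-order operator $I+\dt\cP$. They then need a commutation identity $\cT_i(\cP\phi)=\cP(\cT_i\phi)$ to control the regularity of the iterated test functions $(I+\dt\cP)^k\phi$, which is exactly the propagation-of-regularity issue you flag; your framing through the continuous backward equation makes this step more transparent. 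Second, for the concentration bound you use Hoeffding (only the range $|\phi|\le\norm{\phi}_\infty$ is needed), whereas the paper goes through Bernstein and therefore proves a separate variance bound $\var[\mathsf{e}_{1,\phi}^m]\le C_2$ that your argument does not require for the stated result. Both choices on your side are slight simplifications; the paper's route gives marginally sharper constants but no stronger conclusion at the level of the theorem.
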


We have a few comments on~\Cref{thm:f_conv_summary}. Firstly, the notations $\mathcal{O}$ and $\lesssim$ hide a constant dependence. As expected, this constant depends on $T$, initial condition $f_\text{in}$ and $\phi$, but we stress that it does not depend on $M$ and $\Delta t$. We will make this constant dependence clear in the proofs of~\Cref{prop:expectation} and~\Cref{prop:f_convergence} in~\Cref{sec:NA}, which are dedicated to explaining the two bullet points in \Cref{thm:f_conv_summary} separately. Secondly, the conclusion above presents a strong contrast against traditional numerical methods such as finite difference or finite volume, where Lax Theorem requires both consistency and stability for the convergence, and the stability requirement usually poses conditions on $\Delta t$. The statement in our theorem holds true with $\Delta t$ not experiencing extra stability issue. Thirdly, the error is first order in time and $\mathcal{O}(1/\sqrt{N})$ in the number of particles. Note that the exponential form in~\Cref{eq:f_convergence_summary} is consistent with the prediction of the LLN. Indeed, for a small $\delta$,
\[
\exp \left(-N\epsilon^2\right)\leq \delta\quad \Longleftrightarrow \qquad  \epsilon>\sqrt{|\log\delta|/N }\,.
\]
Thus, the lower bound of $\epsilon$ honors the celebrated $\mathcal{O}(1/\sqrt{N})$ convergence rate for MC methods.
%\ql{pls check if these statements look right.}\yy{it looks right} \lw{I changed $\epsilon>|\log\delta|/ \sqrt{N}$ to $\epsilon> \sqrt{|\log\delta|/ N}$}

We stress that the error is quantified when tested against a test function $\phi$. This is inevitable since $\numf$ is a summation of delta measures, and the convergence has to be presented in the weak form.

\subsection{Pitfall of a Direct Monte Carlo for $g$}\label{subsec:unco-gn}
Due to the similarity between the forward \eqref{eqn:RTE} and adjoint \eqref{eqn:g_eqn} equations, it is natural to use the same MC method to obtain $g$. Indeed, let $\tau = T-t$, \eqref{eqn:g_eqn} rewrites:
\begin{equation}\label{eq:reverse RTE}
\partial_\tau g -v\cdot\nabla_xg = \sigma\mathcal{L}[g]\,,
\end{equation}
which is precisely the same as~\eqref{eqn:RTE} except for the flip of the sign in velocity. Therefore, if we denote $\{x_n,v_n\}_{n=1}^N$ to be a list of $N$ particles, then the same particle motion described in~\Cref{sec:MC-f} can be used here to produce a consistent algorithm. Like before, we can define 
\[
\numg_N(\tau, x,v)=\frac{1}{N}\sum_{n=1}^N\delta(x-x_n(\tau))\delta(v-v_n(\tau))\,.
\]
As with $\numf_N$, we expect that $\numg_N$ approximates $g(\tau,x,v)$ in the same way as in~\Cref{thm:f_conv_summary}, namely, $\langle g-\numg_N\,,\phi\rangle\sim0$ in the $\dt\to 0$ and $N\to\infty$ limit.

Together, we have $\numg_N\approx g$ and $\numf_N\approx f$. It is attempting to compute $\mathfrak{G}$ by  replacing $f$ and $g$ with their numerical approximation $\numf_N$ and $\numg_N$, respectively. The discrete version of \eqref{eqn:frechet_derivative} may take the form: 
\[
\mathfrak{G}_1= \int \langle f\rangle_v\langle g\rangle_v\rd{t} \approx \dt\sum_{m=1}^M\langle\numf^m_N\rangle_v\langle\numg^m_N\rangle_v\,,\quad \mathfrak{G}_2=\int_t \langle fg\rangle_v\rd{t} \approx \dt\sum_{m=1}^M\langle\numf^m_N\numg^m_N\rangle_v\,.
\]
However, this immediately yields a problem since both $\numf_N$ and $\numg_N$ are defined as delta measures. The definitions of both $\langle\numf^m_N\rangle_v\langle\numg^m_N\rangle_v$ and $\langle\numf^m_N\numg^m_N\rangle_v$ represent multiplications of two delta measures and thus cannot be justified mathematically. Indeed, according to~\Cref{thm:f_conv_summary}, the convergence of $\numf_N\to f$ and similarly $\numg_N\to g$ is achieved only in the weak sense. Then the product of two weak limits loses its precise definition. This suggests that naively computing the forward and adjoint equations using the standard MC solvers for assembling the gradient in~\eqref{eqn:frechet_derivative} cannot produce an accurate numerical approximation to the gradient.

We address that this incompatibility of MC solvers with the Fr\'echet derivative computation is not a mathematical artifact but is rooted in the physical meaning of forward and adjoint equations. At the core of computing the Fr\'echet derivative, the forward solver delivers the initial data to the final time by picking up the media information along the evolution. The Fr\'echet derivative captures the dependence of the final data on the media $\sigma(x)$ where the forward trajectories have visited. A small perturbation in $\sigma(x)$ will change the final data accordingly. Adjoint solvers allow us to trace back this change in the final data to the perturbation in media. The inconsistency problem of the MC solver comes from the fact that the forward solver and the adjoint solver use independent trajectories. While the forward solver delivers the change of media along certain trajectories to the change of the final data, the adjoint solver traces back such change along a totally different set of trajectories. There is a chance to conduct computation if the two sets of trajectories are close, but these events occur with an extremely small probability due to the independent nature of the trajectory generation process.

We believe there are numerical strategies to resolve this issue, for example,  approximating Dirac delta functions using Gaussian kernels with small support. Nevertheless, we expect that these numerical tricks will pose high requirements on the density of trajectories on the phase space and the size of $N$ and therefore call for careful manipulation of the PDE solvers. We do not pursue this direction in the current paper but instead target a systematic fix of the inconsistency issue between MC solvers and the Fr\'echet derivative computation.

\subsection{Gradient Calculation Revisited} 
In this work, we develop a new method for \eqref{eqn:g_eqn} to avoid ambiguity in defining the product of two delta measures. To begin with, 
we examine the relation between the adjoint and the forward variables. One observation is that the adjoint variable is designed to have certain quantities preserved in time:
% similar solver as in ~\Cref{alg:f-RTE} can be applied to solve~\eqref{eqn:g_eqn}. However, keeping track of two sets of particles can be very uneconomical. Since the ultimate goal is to compute the derivative~\eqref{eqn:frechet_derivative} rather than the actual solution of $g$, a more desirable approach is to recycle the particles used in solving the forward problem~\eqref{eqn:RTE} to compute the derivative. To this end, we derive a correlated approach where the adjoint solution is obtained almost for free.  
% Our starting point is the observation that
\begin{equation}\label{tfg}
\partial_t  \langle fg\rangle_{xv} \equiv 0\,,
\end{equation}
% This requirement is imposed to eliminate the undesired quantities from the integration-by-part procedure, and
which can be justified, in our particular setting, by comparing \eqref{eqn:RTE} multiplied by $g$ and \eqref{eqn:g_eqn} by $f$. % This relation is not unique from our specific problem, but rather generic and can be found in most PDE-constrained optimizations.
% \lw{is \eqref{tfg} designed to be this or comes out naturally as it is? } \yy{\eqref{tfg} comes naturally from the equation for $f$ and $g$. It is not always like this... I think the structure is unique to RTE or certain linear PDEs.}

This relation, when replacing $f$ by $\mathsf{f}_N$, becomes:
\[ %\label{eq:fngn}
    \sum_{n=1}^N g(t^m, x_n^m, v_n^m) =    \sum_{n=1}^N g( t^{m+1},x_n^{m+1}, v_n^{m+1})\,,
\]
which can be easily satisfied if we require
% up to time discretization error. Therefore, a sufficient algorithm for $g$ to satisfy~\eqref{eq:fngn} is to set
\[%\label{eq:gn_back}
g(t^m, x_n^m, v_n^m)  = g(t^{m+1}, x_n^{m+1}, v_n^{m+1})\,, \quad m= M-1,\ldots,0,
\]
with the final condition set to be:
\begin{align} \label{psi}
   g(T, x_n^M, v_n^M)= \psi(x^M_n,v^M_n) = -\frac{\delta J}{\delta f}(T,x_n^M,v_n^M)\,.
\end{align}
As a result, along every particle's trajectory of $\{(x_n^m\,,v^m_n)\}_{m=0}^M$, $g$ takes a constant value. In other words, if we denote $\numg^m_N(x,v)$ as the numerical solution approximating $g(t^m,x,v)$ using $N$ particles, then we have scattered value of $\numg^m_N(x,v)$ at the particle locations along the dynamics:
\begin{align}\label{gi}
   \numg^m_N(x,v)\big|_{(x_n^m,v_n^m)}  = \numg^m_N(x_n^m,v_n^m) = \numg_n := \psi (x_n^M, v_n^M)\, ,
\end{align}
for all $0\leq m \leq M$ and $1\leq n \leq N$.  For $(x,v) \not\in \{(x_n^M, v_n^M)\}_{n=1}^N$, we may define the value of $ \numg^m_N(x,v)$ through interpolation from $\{ \numg^m_N(x_n^m,v_n^m) \}_{n=1}^N = \{\numg_n\}_{n=1}^N$. 

We call this approach a {\it correlated} approach, due to the fact that $\numg$ and $\numf$ are not fully independent, as discussed in~\Cref{subsec:unco-gn}.

\begin{remark}\label{rmk:g_solver}
We should emphasize that the algorithm presented in~\eqref{gi} is derived merely from the relation~\eqref{tfg}, which is {\it not} equivalent to the original adjoint equation~\eqref{eqn:g_eqn}. Indeed, starting from~\eqref{tfg}, one can only tell that $g$ satisfies:
\[
\langle\partial_tg-v\cdot\nabla_x g-\sigma\cL g\,,f\rangle_{xv}=0\,.
\]
That is, the adjoint equation holds only when projected on $f$, the solution manifold to the forward equation~\eqref{eqn:RTE}. This is different from saying that $g$ is a weak solution to~\eqref{eqn:g_eqn}, which requires the above equation to hold for any test function in $\Phi$ instead of $f$ alone. Nevertheless, the solver \eqref{gi} indeed provides an approximation to~\eqref{eqn:g_eqn}, and the proof is omitted from the current paper.
% is sufficient for the derivative computation. This is because the derivative is calculated on the solution manifold of $f$, as required by the fact $\frac{\delta \mathsf{L}}{\delta f} = 0$ in \eqref{eqn:frechet_derivative}. We term this approach a {\it correlated} approach. \ql{I am actually not sure about the last sentence... Who's $\mathsf{L}$?}
%\lw{I changed a few sentences above. Please check.} \yy{looks good to me}
% Nevertheless, the solver~\eqref{gi} still correctly compute the adjoint equation -- one has to directly start from the equation instead of relying on the constraint~\eqref{tfg}. In~\Cref{sec:NA} where we present the numerical analysis solver, we will directly prove the computation of the gradient, but will give a comment on the proof for this algorithm to be consistent with the equation.
\end{remark}

With the numerical solution $\numg$ in hand, we now proceed to find numerical approximation to the Fr\'echet derivative~\eqref{eqn:frechet_derivative}. Based on the weak formulation of $f$ and that $\mathsf{f}$ is presented discrete-in-time, the Fr\'echet derivative is written in its weak semi-discrete-in-time form as well. After testing~\eqref{eqn:frechet_derivative} against a test function $\phi(x)$ and conducting the simple Riemann sum in time, we have:
\begin{align}\label{eqn:semi-grad}
    \mathfrak{G}_\phi:=\average{\frac{\delta\mathfrak{L}}{\delta\sigma}, \phi}_x 
    & =  \int \langle f(t^m, \cdot, \cdot)g(t^m, \cdot, \cdot),  \,\phi(\cdot )\rangle_{xv}\rd{t} - |\Omega|^{-1} \int \Big\langle\langle f(t^m, \cdot, \cdot)\rangle_{v}\langle g(t^m, \cdot, \cdot)\rangle_{v},\, \phi(\cdot)\Big\rangle_x\rd{t} \nonumber
    \\
    &\approx   \dt\sum_{m=1}^M \left[ \frakG_{1,\phi}^m - |\Omega|^{-1}\frakG_{2,\phi}^m  \right]\,,
\end{align}
where we used the notations
\begin{equation}\label{eqn:frakG_cont}
    \frakG_{1,\phi}^m := \langle f(t^m, \cdot, \cdot)g(t^m, \cdot, \cdot),\,  \phi(\cdot )\rangle_{xv}\,,
    \qquad 
    \frakG_{2,\phi}^m := \Big\langle\langle f(t^m, \cdot, \cdot)\rangle_{v}\langle g(t^m, \cdot, \cdot)\rangle_{v}, \,\phi(\cdot)\Big\rangle_x \,.
\end{equation}

At every discrete time $t^m$, according to~\Cref{alg:P-OTD}, $\numf^m_N$ approximates of $f(t^m,\cdot,\cdot)$ using $N$ particles, and $\numg^m_N$ records the value of $g(t^m,\cdot,\cdot)$ on these particle trajectories. As a consequence, the discrete version of~\eqref{eqn:frakG_cont} writes as
\begin{equation}\label{eqn:frakG_dis}
    \frakG_{N,1,\phi}^{m} = \frac{1}{N} \sum_{n=1}^N \phi(x_n^m)\, \numg_n\,,\quad \frakG_{N,2,\phi}^{m} = \frac{1}{N} \sum_{n=1}^N \phi(x_n^m) \, \average{\numg}_v(x_n^m) \,,
\end{equation}
and thereby, the final discrete Fr\'echet derivative, when tested on $\phi$, takes the following form as the $N$-particle approximation to~\eqref{eqn:semi-grad}:
\begin{equation}\label{eqn:num-grad}
      \mathfrak{G}_{N,\phi} :=\average{\frac{\delta\mathfrak{L}^N}{\delta\sigma}, \phi}_x 
   = \Delta t  \sum_{m=1}^M(\frakG_{N,1,\phi}^{m} - |\Omega|^{-1} \frakG_{N,2,\phi}^{m})\,.
\end{equation}

In the implementation, we need the strong form of the derivative evaluated on a given mesh. To do so, for every $\bar{x}$ on the mesh, we denote $\Delta x$ the mesh size and $Q(\bar{x}\,,\Delta x)$ the hypercube centered at $\bar{x}$ with side length $\Delta x$, and set $\phi=\mathds{1}_{Q(\bar{x}\,,\Delta x)}$, the corresponding indicator function. The strong form of the derivative then becomes:
\begin{equation}\label{eq:OTD_grad}
\mathfrak{G}_N(\bar{x})=\frac{\delta\mathfrak{L}^N}{\delta \sigma}(\bar x)=  \Delta t\sum_{m=1}^N\Big(\langle \numf^m_N \numg^m_N\rangle_v (\bar x) -  |\Omega|^{-1} \langle \numf^m_N\rangle_v (\bar x) \langle \numg^m_N\rangle_v (\bar x) \Big) \,,
\end{equation}
with each term computed as:
\begin{eqnarray}
 \langle \numf^m_N\rangle_v  (\bar x)  &=& \frac{1}{|\Delta x|^d} \frac{1}{N}\sum_{n=1}^N \mathds{1}_{x_n^m \in Q(\bar x; \Delta x)} \,, \label{eqn:fn}\\
 \langle \numg_N^m\rangle_v (\bar x)   &=&   \sum_{x_{n_1}^m \in Q(\bar x; \Delta x)}  w_{n_1}^m \, \numg_{n_1} \,, \label{eqn:gn}\\
  \langle \numf_N^m \numg_N^m\rangle_v (\bar x)   &=& \frac{1}{|\Delta x|^d}\frac{1}{N} \sum_{n=1}^N \mathds{1}_{x_n^m \in Q(\bar x; \Delta x)} \,\numg_n \,  \label{eqn:fgn}.
\end{eqnarray}
Here, $\{w_{n_1}^m\}$ are the quadrature weights in the $v$ domain such that
\[
\sum_{x_{n_1}^m \in Q(\bar x; \Delta x)}  w_{n_1}^m \, \numg_{n_1} = \sum_{x_{n_1}^m \in Q(\bar x; \Delta x)}  w_{n_1}^m \numg_N^n(x_{n_1}^m, v_{n_1}^m) \approx \sum_{x_{n_1}^m \in Q(\bar x; \Delta x)}  w_{n_1}^m g(t^m, \bar x, v_{n_1}^m) \approx \int g(t^m, \bar x, v) \rd v.
\]
% These definitions are in place as approximations to Dirac delta measures. Indeed, as $\Delta x\to0$:
% \begin{equation}\label{eqn:limit_grad}
% \langle \numf^m_N\rangle_v(x)=\frac{1}{N}\sum_{n=1}^N\delta_{x_n^m}\,,\quad \langle \numf^m_N\numg^m_N\rangle_v(x)=\frac{1}{N}\sum_{n=1}^N\delta_{x_n^m}\numg_i\,,\quad \text{and}\quad \langle\numg^m_N\rangle_v(x) = \sum_n\langle\numg\rangle_v(x^m_n)\mathbb{I}_{x_n^m}\,.
% \end{equation}
% \ql{not quite sure about the last term... this is what we had before...``with $\average{\numg}_v(x_n^m)$ being the numerical integral of $g$ with respect to $v$ using trapezoidal rule as in \eqref{eqn:gn} (with $\bar x$ replaced by $x_n^m$)."}
In particular, out of three calculations listed in \eqref{eqn:fn}--\eqref{eqn:fgn}, \eqref{eqn:gn} is the most expensive part. The reason is that, at each time step,  one needs to loop over space to sort the particles, which results in  $\mathcal{O}(NM (\log N))$ total complexity with $N$ being the total number of particles and $M$ the total time steps. 

In~\Cref{alg:P-OTD}, we summarize the steps of computing the gradient using the correlated approach.

\begin{algorithm}
\caption{A Correlated Approach for Gradient Computation\label{alg:P-OTD}}
\begin{algorithmic}[1]
\State  Given cell centers $\{\bar{x}_j\}$ where we want to evaluate the gradient~\eqref{eqn:frechet_derivative}.
\State Implement~\Cref{alg:f-RTE} to solve the forward RTE~\eqref{eqn:RTE} and store the trajectories $\{(x_n^m,v_n^m)\}$ for all $n=0,\ldots,M$ and $n=1,\ldots,N$.  
\State Using~\eqref{eqn:min} and \eqref{psi} to determine the final condition for $g$ and set $\numg_n$ following~\eqref{gi}, for all $1\leq n\leq N$.
\For{$m=1$ to $M$}
\State 
%Use the stored $\{(x_n^m,v_n^m)\}_{n=1}^N$ and $\{\numg_n\}_{n=1}^N$ to compute  $a_j^m:=  \langle \numf^m_N \rangle_v  ({\bar x_j})$, $b_j^m:= \langle \numg^m_N \rangle_v  ({\bar x_j})$ and $c_j^m := \langle \numf^m_N \numg^m_N \rangle_v  ({\bar x_j})$ following~\eqref{eqn:fn},\eqref{eqn:gn} and~\eqref{eqn:fgn}, respectively, for every element in $\{\bar{x}_j\}$.
Use the stored $\{(x_n^m,v_n^m)\}_{n=1}^N$ and $\{\numg_n\}_{n=1}^N$ to compute  $  \langle \numf^m_N \rangle_v  ({\bar x_j})$, $\langle \numg^m_N \rangle_v  ({\bar x_j})$ and $\langle \numf^m_N \numg^m_N \rangle_v  ({\bar x_j})$ following~\eqref{eqn:fn},\eqref{eqn:gn} and~\eqref{eqn:fgn}, respectively, for every element in $\{\bar{x}_j\}$.
% \ql{can we remove the notations $a/b/c$?}\yy{done}
\EndFor 
\State The gradient evaluated at $\bar{x}_j$ is computed following~\eqref{eq:OTD_grad}.
%$\Delta t\sum_{m=1}^M\big( c_j^m -   a_j^m\, b_j^m / |\Omega|   \big)$.
\end{algorithmic}
\end{algorithm}

We note that the approximation to the Fr\'echet derivative~\eqref{eq:OTD_grad} contains four layers of error:
\begin{itemize}
    \item Time discretization: $\dt\sum_{m=1}^M$ is used as a replacement of $\int\rd{t}$. This is the simplest Riemann sum for the time integration, and we expect the error at the order of $\dt$. Throughout our analysis, this part of the error is omitted
    % and we do not distinguish semi-discrete-in-time solution and continuous-in-time solution, by setting
    % \[
    % \mathfrak{G}_\phi=\dt\sum_{m=1}^M[\mathfrak{G}_{1,\phi}(t^m)-|\Omega|^{-1}\mathfrak{G}_{2,\phi}(t^m)]
    % \]
    as we regard the following semi-discrete in-time Fr\'echet derivative as the ground truth:
    \begin{equation}
% \begin{aligned}
\frac{\delta\mathfrak{L}}{\delta\sigma}(x) =   \Delta t  \sum_{m=1}^M \Big(   \langle fg (t^m, x,\cdot )\rangle_v  - |\Omega|^{-1} \langle f(t^m,x,\cdot)\rangle_{v}\langle g(t^m,x,\cdot) \rangle_{v} \Big)\,.\label{eqn:frechet_derivative_semi}
\end{equation}
It is the strong form of~\eqref{eqn:semi-grad}.
% Here $f(t^m)$ and $g(t^m)$ are true solutions to the forward RTE \eqref{eqn:RTE} and the adjoint~\eqref{eqn:g_eqn} at time $t^m$. 
\item Spatial discretization. This is used in the final assembling of the gradient~\eqref{eq:OTD_grad} where for every fixed $\bar{x}$, a small hypercube of volume $\Delta x^d$ is drawn, and values are averaged out inside the cube. It naturally brings a smoothing effect. Such smoothing effects are heavily studied in the literature, and we only cite~\cite{raviart1985analysis} for details.
\item Quadrature error in the velocity domain. This enters through the term~\eqref{eqn:gn} to approximate the velocity domain integration $\langle \cdot \rangle_v$. The error from this term highly depends on the quadrature rule selected for defining $\{w_{n_1}^m\}$. With the simplest Trapezoidal rule, we expect the error to be of order $\mathcal{O}(\Delta v^2)$ where $\Delta v$ is the largest discrepancy between two sampled particles in velocity in the same $\bar{x}$-neighborhood. Throughout the paper, we denote this part of the error by $e_v$. We comment that this term inherits the randomness from MC sampling and requires some delicate analysis. This goes beyond the current scope of the paper and will be left to future study.
\item Monte Carlo error. This comes from the fact that random particles represent the PDE solution. \Cref{thm:f_conv_summary} states that $\mathsf{f}^m_N$ is a good approximation, with LLN ($\mathcal{O}(1/\sqrt{N})$) convergence rate, to the ground truth $f(t^m)$. A statement of similar flavor shall be provided for the gradient.
% Monte Carlo error. This comes from the fact that random particles represent the PDE solution. Theorem~\ref{thm:f_conv_summary} states that $\mathsf{f}^m_N$ is a good approximation, with LLN ($\mathcal{O}(1/\sqrt{N})$) convergence rate, to the ground truth $f(t^m)$. The same statement should be provided for~\eqref{eq:OTD_grad}, as a good approximation to~\eqref{eqn:frechet_derivative}. The paper focuses on the MC error while eliminating the $\dt$ and $\dx$ pollution. The estimates we obtain justify that in the $\dx\to0$ limit,~\Cref{alg:P-OTD} using~\eqref{eq:OTD_grad} indeed provides, with high confidence, an accurate approximation to the semi-discrete-in-time ground-truth gradient~\eqref{eqn:frechet_derivative_semi}.
\end{itemize}

Among these four types of errors, the second and the third kind start to matter only when the error is presented in the strong form. If the error is studied in the weak form, one only needs to analyze the first and the forth kinds. Since the time discretization is a standard Riemann sum error, the focus of the paper is placed on the forth: the MC error. The estimates we obtain justify that in the weak sense,~\Cref{alg:P-OTD} indeed provides, with high confidence, an accurate approximation to the semi-discrete-in-time ground-truth gradient.
% \ql{pls check}\yy{The number of particle $N$ needs to go to $\infty$ as $\dx \to 0$. In this end, it is the particle-per-cell that determines convergence.} \lw{So should we just change $\Delta x \rightarrow 0$ to $N \rightarrow \infty$ since we do not really consider spatial discretization in the paper? }
The rigorous statement is summarized as follows:
\begin{theorem}\label{thm:gradient_summary}
%Let $\mathfrak{G}_N$ be the numerical Fr\'echet derivative computed using~\Cref{alg:P-OTD}. \yy{$\mathfrak{G}_N$ defined in~\eqref{eq:OTD_grad} is not used in the statement of this theorem.} \lw{please change it as you see fit.} It is the numerical strong form of $\mathfrak{G}_{N,\phi}$~\eqref{eqn:num-grad}. 
For all $\phi\in\Phi$,  $\mathfrak{G}_{N,\phi}$ defined in~\eqref{eqn:num-grad} approximates the true derivative $\mathfrak{G}_\phi$ in its semi-discrete form~\eqref{eqn:semi-grad} with high probability. Namely, for any $\epsilon>0$,
\begin{equation}\label{eqn:prop_error_gradient}
\mathbb{P}(|\mathfrak{G}_{N,\phi}  - \mathfrak{G}_{\phi}|> \mathcal{O}(\epsilon+ \dt + |e_v|)) \leq 2\exp\left(-\frac{N\epsilon^2}{3C_3}\right)\,,
\end{equation}
with $C_3$ only depending on the regularity of the initial condition in~\eqref{eqn:RTE}, the final condition in~\eqref{eqn:g_eqn}, and the test function; see \eqref{C5}. % \ql{who's measurement...?}\yy{I have changed. Pls check too} \lw{looks good to me.}
\end{theorem}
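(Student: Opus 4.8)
The plan is to rewrite the numerical gradient $\mathfrak{G}_{N,\phi}$ of~\eqref{eqn:num-grad} as an empirical average over the particle index $n$ of independent, bounded random variables, and then to control the error through the classical split into a bias (expectation) part, a fluctuation (concentration) part, and the deterministic velocity-quadrature budget $e_v$. First I would isolate the quadrature error: in~\eqref{eqn:frakG_dis} the term $\frakG_{N,2,\phi}^m$ contains $\average{\numg}_v(x_n^m)$, built from the weights $\{w_{n_1}^m\}$, and I replace it by the exact velocity average $\average{g}_v(t^m,x_n^m)$ of the continuous adjoint, incurring the velocity-quadrature error $e_v$ (whose own probabilistic analysis is deferred, as noted in the excerpt). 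After this substitution, interchanging $\dt\sum_m$ with $\frac1N\sum_n$ gives $\mathfrak{G}_{N,\phi} = \frac1N\sum_{n=1}^N W_n + \cO(e_v)$, where $W_n := \dt\sum_{m=1}^M \phi(x_n^m)\big(\numg_n - |\Omega|^{-1}\average{g}_v(t^m,x_n^m)\big)$ depends only on the trajectory of the $n$-th particle and on the deterministic functions $\phi$, $\psi=d-f(T)$ from~\eqref{psi}, and $\average{g}_v$. Since the trajectories are i.i.d. (as noted after~\Cref{alg:f-RTE}) and these functions are bounded under the stated regularity, each $|W_n|\le C$ with $C$ independent of $N$, $M$, $\dt$; this $C$ feeds into $C_3$.

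The crux of the bias estimate is to interpret the cross-time factor $\numg_n = \psi(x_n^M,v_n^M)$, which couples the time-$m$ position $x_n^m$ to the final state of the same trajectory. I would introduce the discrete adjoint field $\Psi^m(x,v) := \bbE[\psi(x_n^M,v_n^M)\mid x_n^m=x, v_n^m=v]$, i.e.\ the backward Kolmogorov semigroup of the~\Cref{alg:f-RTE} Markov chain applied to $\psi$; the tower property then gives $\bbE[\phi(x_n^m)\numg_n] = \bbE[\phi(x_n^m)\Psi^m(x_n^m,v_n^m)]$. Two consistency facts identify the limit. First, by duality the operator producing $\Psi^m$ is the adjoint of the forward transition kernel whose pushforward is analyzed in~\Cref{thm:f_conv_summary}; as that kernel is a first-order-in-time discretization of the forward RTE, its adjoint is a first-order discretization of~\eqref{eqn:g_eqn}, whence $\Psi^m = g(t^m,\cdot,\cdot) + \cO(\dt)$ weakly. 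Second, the single-particle law at step $m$ approximates $f(t^m)$ weakly to order $\dt$, which is exactly the expectation bound~\eqref{eq:f_convergence_exp_summary} applied with test function $\phi(x)\Psi^m(x,v)\in\Phi$. Combining these (and the analogous statement for the $|\Omega|^{-1}$ term with test function $\phi(x)\average{g}_v(t^m,x)$) yields $\bbE[W_n] = \dt\sum_{m=1}^M\big(\frakG_{1,\phi}^m - |\Omega|^{-1}\frakG_{2,\phi}^m\big) + \cO(\dt) = \mathfrak{G}_\phi + \cO(\dt)$.

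For the fluctuation I would apply a Bernstein/Hoeffding inequality to the i.i.d.\ bounded average: for any $\epsilon>0$, $\mathbb{P}\big(|\frac1N\sum_n W_n - \bbE W_n| > \epsilon\big) \le 2\exp(-N\epsilon^2/(3C_3))$, with $C_3$ controlling the range and variance of $W_n$ through $\|\phi\|$, $\|\psi\|$, and $T$. The triangle inequality $|\mathfrak{G}_{N,\phi}-\mathfrak{G}_\phi| \le |\frac1N\sum_n W_n - \bbE W_n| + |\bbE W_n - \mathfrak{G}_\phi| + \cO(e_v)$ then absorbs the $\cO(\dt)$ bias and the $\cO(e_v)$ quadrature error into the deterministic budget, producing~\eqref{eqn:prop_error_gradient}.

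The main obstacle I anticipate is the first consistency fact: rigorously establishing that the constant-along-trajectory prescription~\eqref{gi}, which~\Cref{rmk:g_solver} explicitly warns is \emph{not} the weak solution of~\eqref{eqn:g_eqn}, nevertheless yields a $\Psi^m$ consistent with $g(t^m)$ to first order. The clean route is to exploit the discrete counterpart of the conservation identity~\eqref{tfg} — pairing the forward kernel against its adjoint — rather than attempting to read $\numg$ as an adjoint solution directly. A supporting technical point is the regularity needed to place $\phi\,\Psi^m$ in $\Phi$, since the smoothing must survive the velocity averaging in $\cL$.
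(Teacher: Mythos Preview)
Your proposal is correct and rests on the same key ingredient as the paper: the conditional-expectation identity $\Psi^m(x,v):=\bbE[\psi(x^M,v^M)\mid (x^m,v^m)=(x,v)]=g(t^m,x,v)+\cO(\dt)$. The paper packages this slightly differently. Rather than aggregating over $m$ into a single per-particle variable $W_n$ and applying Bernstein once, the paper works time-step by time-step: for each $m$ it introduces the auxiliary $\tilde{\frakG}_{N,i,\phi}^m$ obtained by replacing $\numg_n$ with the true $g(t^m,x_n^m,v_n^m)$, and then controls $|\tilde{\frakG}^m_{N,i,\phi}-\frakG^m_{i,\phi}|$ (forward law error, \Cref{lem:G11}) and $|\frakG^m_{N,i,\phi}-\tilde{\frakG}^m_{N,i,\phi}|$ (your $\Psi^m\approx g$ step, \Cref{lem:G12}) separately, applying Bernstein to each piece. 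Your single-$W_n$ organization is arguably cleaner in that it avoids an implicit union bound over $m$, while the paper's split makes the two error mechanisms (forward sampling vs.\ backward propagation) explicit.

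Two small cautions. First, the consistency $\Psi^m=g(t^m)+\cO(\dt)$ is established by the paper \emph{pointwise} (uniformly in $(x,v)$), by rerunning the one-step expansion of \Cref{lem:expectation} from step $m$ with the smooth test function $\psi=g(t^M)$; you should say ``uniformly'' rather than ``weakly,'' since a merely weak bound would not immediately control $\bbE[\phi(x_n^m)(\Psi^m-g)(x_n^m,v_n^m)]$. Second, your substitution $\langle\numg\rangle_v(x_n^m)\to\langle g\rangle_v(t^m,x_n^m)$ incurs not only the deterministic quadrature error $e_v$ but also the random discrepancies $\numg_{n_1}-g(t^m,x_{n_1}^m,v_{n_1}^m)$ for the neighboring particles $n_1$; the paper separates these (the latter handled again by the $\Psi^m\approx g$ bound), and you should too, lest the cross-particle coupling contaminate the i.i.d.\ structure of your $W_n$.
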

This theorem is the theoretical justification of~\Cref{alg:P-OTD}. It states that the numerical Fr\'echet derivative, defined using a summation of delta measures, approximates the true gradient in the weak sense,
%when measured by the negative-Sobolev norm, 
with a probability that depends on the number of particles in the algorithm. Involving more particles guarantees a higher probability of capturing the gradient with better precision. The formula~\eqref{eqn:prop_error_gradient} gives a precise quantification of the decay of possibilities, in $N$, of getting outliers. Similar to~\Cref{thm:f_conv_summary}, this convergence rate is in line with the LLN prediction and obtains a $\frac{1}{\sqrt{N}}$ decay. Since this is the rate of the MC solver for the forward problem (see~\Cref{thm:f_conv_summary}), it is the best one can hope for in the computation of the gradient. 
%\ql{maybe add something about $N$}

\section{Numerical Analysis}\label{sec:NA}
We hereby provide the justification to~\Cref{thm:f_conv_summary,thm:gradient_summary} regarding the computation of $f$ in~\Cref{eqn:RTE} and the Fr\'echet derivative~\eqref{eqn:frechet_derivative} calculation.

\subsection{Convergence of the Monte Carlo Method for the Forward RTE}\label{subsec:fn}
In~\Cref{alg:f-RTE}, we have presented the MC solver for computing~\eqref{eqn:RTE}. From each time step to the next, two random variables are involved. One performs the rejection sampling, and the other is the uniform sampling for the new velocity direction. The current section is dedicated to proving~\Cref{thm:f_conv_summary} that gives both an expectation and concentration bound for the error $\mathsf{e}_{N,\phi}^m$.

Based on the definition in~\eqref{eqn:error_phi}, $\mathsf{e}^m_{N,\phi} =\langle\mathsf{f}_N^m-f(t^m)\,,\phi\rangle_{x,v}$ where $\mathsf{f}_N=\frac{1}{N}\sum_{n}\delta(x-x_n^m)\delta(v-v_n^m)$. Since $\{(x^m_n\,,v^m_n)\}_{n=1}^N$ are independent of each other for any fixed $m$, we have
\begin{equation}\label{eqn:N_error}
\mathsf{e}^m_{N,\phi}=\frac{1}{N}\sum_{n=1}^N\mathsf{e}^m_{1,\phi}(n)\,,
\end{equation}
where $\mathsf{e}^m_{1,\phi}(n)$ is the $n$-th realization of $\mathsf{e}^m_{1, \phi}$. This then implies that at the expectation level, $\mathbb{E}[\mathsf{e}^m_{1,\phi}]=\mathbb{E}[\mathsf{e}^m_{N,\phi}]$. At the variance level, the concentration inequality shall be applied. These arguments clearly suggest the following roadmap:

\begin{itemize}
    \item We will first prove that each particle, in expectation, solves the RTE~\eqref{eqn:RTE}. This is to say that, if $N=1$ and we accordingly define:
\begin{equation} \label{fn}
    \numfo^m(x,v) :=\delta(x-x^m)\delta(v-v^m)\,,
\end{equation}
then $\numfo^m(x,v)$ numerically solves~\eqref{eqn:RTE} in the weak sense in expectation, up to a discretization error in $\dt$.  More precisely, for any $\phi\in \Phi$, 
\[ %\label{bbEn}
\mathbb{E}^m [\mathsf{e}^m_{1,\phi}] =\mathbb{E}^m \left[ \langle \numfo^{m} - f(t^m),\phi\rangle_{x,v} \right] = \mathcal{O}(m \Delta t^2 ).
\]
The analysis is presented in~\Cref{sec:single_particle}, and it concludes the first part of~\Cref{thm:f_conv_summary}.
\item We then level the calculation up to the $N$-particle system using~\eqref{eqn:N_error}, calling the Bernstein inequality. To directly utilize the concentration bound, we will control the variance of $\mathsf{e}_{1,\phi}^m$. This is to be discussed in~\Cref{sec:many_particles} and it concludes the second part of~\Cref{thm:f_conv_summary}.
\end{itemize}

\subsubsection{Single Particle}\label{sec:single_particle}
We prove that each particle sampled according to~\Cref{alg:f-RTE}, in expectation, traces the evolution of $f$ in the weak sense. For any $\phi\in\Phi$, we recall the definition of the error in~\eqref{eqn:error_phi} and denote the expected value to be
\begin{equation}\label{eqn:def_error_exp}
    e^m_{1,\phi} = \mathbb{E}^m \left[\mathsf{e}^m_{1,\phi} \right]=\mathbb{E}^m \left[ \langle \numf^{m}_1 - f(t^m),\phi\rangle_{x,v} \right]\,,
\end{equation}
where the subindex $1$ in $\mathsf{e}$ and $e$ reflects that there is only one particle. This index will be omitted in the later part of this section.

We will track the growth of $e^m_\phi$ with respect to $m$ and prove that in each time step, the growth is controlled by $\mathcal{O}(\dt^2)$. Thus, the whole scheme is first-order accurate in time. In the proof, we use several positive constants denoted by $C_i$, $i=1,2,\ldots$. We will make explicit the constants' dependence on different parameters but do not spell out the specific dependence. Their exact values may change from line to line.

\begin{lemma}\label{lem:expectation}
Let $\sigma(x) \in W^{1,\infty} (\mathbb{R}^{d_x})$ and $\Omega$ be a bounded velocity domain. Denote by $\{(x^m\,,v^m)\}$ the solution of the  particle trajectory through~\Cref{alg:f-RTE}. Then $\mathsf{f}_1$ defined in~\eqref{fn} approximates $f$, the solution to~\eqref{eqn:RTE}, in the expectation sense. More specifically, for any $\phi\in \Phi$ and any $m\geq 0$, we have
\begin{equation} \label{eq:induction1}
 e^{m+1}_\phi \,\leq \, e^m_{(I+\dt \cP)\phi}  + C_1 \dt^2\,,\quad \cP \phi:= v\cdot \nabla_x \phi + \sigma\cL[\phi]\,,
\end{equation}
where the positive constant %\norm{\phi}_{W_{x,v}^{2,\infty}
\begin{equation} \label{C1}
C_1= C_1(\norm{f_\In}_{W^{2,\infty}_x L_v^\infty}, \|v\|_{L^\infty(\Omega)},  \norm{\sigma}_{W_x^{1,\infty}}, \norm{\phi}_{W_{x,v}^{2,\infty}} ,T )\,.
% \quad\text{and}\quad C_2 = 2 \|\sigma\|_{L_x^\infty} \,,
\end{equation} 
Moreover, $e_\phi^0 = 0$, and we have
\[ %\label{eq:induction}
|e^{m}_\phi| \leq m\, C_1 \dt^2  \,.
\]
Therefore,  $e^m_\phi\to0$ in the limit of $\dt\to0$ for every fixed $1\leq m\leq M$.

% \lw{should we use equality or inequality here? Now can we say that $e_\phi^m$ is positive? }
% \lw{I changed to the following:
% \begin{equation}  \label{e-0911}
%  e^{m+1}_\phi \,= \, e^m_\phi +  \dt \, e^m_{\cP \phi} + \dt^2 \Dop \phi \,,\quad \cP \phi:= v\cdot \nabla \phi + \sigma\cL[\phi]\,,
% \end{equation}
% where 
% \begin{equation} \label{Dphi}
%     \Dop \phi = \half {v^m}^T \nabla_x^2 \phi v^m + C(\nabla_x \sigma) \Lop \phi + C(\sigma) \nabla_x \Lop \phi + C(f) \phi \,,
% \end{equation}
% and we used $C(\nabla_x \sigma)$, $C(\sigma)$ to denote three quantities that depend on $\nabla_x \sigma$, $\sigma$ and  $\norm{f_\In}_{W_x^{2,\infty}L_v^\infty}$ respectively. }
\end{lemma}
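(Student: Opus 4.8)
The plan is to read $e^m_\phi$ as the one-step mismatch between the particle's law and the PDE solution and to close a recursion in $m$. Since for $N=1$ we have $\numfo^m = \delta(x-x^m)\delta(v-v^m)$, testing against $\phi$ gives $\average{\numfo^m,\phi}_{x,v} = \phi(x^m,v^m)$, so that
\[
e^m_\phi = \bbE^m[\phi(x^m,v^m)] - \average{f(t^m),\phi}_{x,v}.
\]
Because $(x^0,v^0)$ is drawn from $f_\In = f(0)$, the base case $e^0_\phi = \average{f_\In,\phi} - \average{f(0),\phi} = 0$ holds for every $\phi$. The strategy is then: (i) expand the particle's one-step conditional expectation to first order in $\dt$; (ii) expand the weak evolution of $f$ to the same order; (iii) subtract to obtain~\eqref{eq:induction1}; and (iv) iterate, feeding $\phi$ through the propagated family $(I+\dt\cP)^k\phi$.

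For (i) I would condition on $(x^m,v^m)$ and split the update into the deterministic transport $x^{m+1}=x^m+\dt\,v^m$ and the random velocity step, which keeps $v^{m+1}=v^m$ with probability $\alpha = e^{-\sigma(x^{m+1})\dt}$ and resamples $v^{m+1}\sim\mathcal{U}(\Omega)$ with probability $1-\alpha$. Hence
\[
\bbE_{p^{m+1},\eta^{m+1}}\!\left[\phi(x^{m+1},v^{m+1})\mid x^m,v^m\right] = \phi(x^{m+1},v^m) + (1-\alpha)\,\cL[\phi](x^{m+1},v^m),
\]
where I used $\tfrac{1}{|\Omega|}\average{\phi(x^{m+1},\cdot)}_v - \phi(x^{m+1},v^m) = \cL[\phi](x^{m+1},v^m)$ from~\eqref{eq:f_v}. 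Using $1-\alpha = \sigma(x^{m+1})\dt + \cO(\dt^2)$ and then Taylor-expanding $\phi$ and $\sigma$ in $x$ about $x^m$ along $x^{m+1}-x^m=\dt\,v^m$ collapses this to $(I+\dt\cP)\phi(x^m,v^m)+\cO(\dt^2)$. The remainder is uniform in $(x^m,v^m)$ and controlled by $\norm{\phi}_{W^{2,\infty}_{x,v}}$, $\norm{\sigma}_{W^{1,\infty}_x}$ and $\norm{v}_{L^\infty(\Omega)}$.

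For (ii) I would use the weak form of~\eqref{eqn:RTE}: integrating by parts in $x$ and using the self-adjointness of $\cL$ (immediate from~\eqref{eq:f_v}) gives $\tfrac{\rd}{\rd t}\average{f(t),\phi}_{xv} = \average{f(t),\cP\phi}_{xv}$, and applying this identity to $\cP\phi$ gives $\tfrac{\rd^2}{\rd t^2}\average{f(t),\phi} = \average{f(t),\cP^2\phi}$. A second-order Taylor expansion in time then yields $\average{f(t^{m+1}),\phi} = \average{f(t^m),(I+\dt\cP)\phi} + \cO(\dt^2)$, the remainder bounded by $\sup_t\average{f(t),\cP^2\phi}$; since $f$ has unit mass this is controlled by $\norm{\cP^2\phi}_{L^\infty}$ (and hence by $\norm{\phi}_{W^{2,\infty}}$, $\norm{\sigma}_{W^{1,\infty}}$), while a pointwise bound on $f$, if preferred, enters $C_1$ through $\norm{f_\In}_{W^{2,\infty}_xL^\infty_v}$ by propagation of regularity. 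Subtracting the two expansions and taking $\bbE^m$ produces exactly~\eqref{eq:induction1}.

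Finally, for (iv), iterating~\eqref{eq:induction1} with $e^0=0$ gives $|e^m_\phi|\le\dt^2\sum_{k=0}^{m-1}C_1\big((I+\dt\cP)^k\phi\big)$. I expect the main obstacle to be keeping the per-step constant uniform: because $C_1$ depends on a $W^{2,\infty}$ norm of its test function, I must show the discrete flow $(I+\dt\cP)^k$ is stable in $W^{2,\infty}_{x,v}$ for $k\dt\le T$. This is a discrete Gronwall estimate — differentiating $(I+\dt\cP)^k\phi$ up to second order generates commutator terms involving $\nabla\sigma$ and $\nabla^2\phi$ whose norms grow by a factor $1+C\dt$ per step, hence stay bounded by $e^{CT}$ — and it is precisely what injects the $T$ and $\norm{\sigma}_{W^{1,\infty}}$ dependence into $C_1$. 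With this uniform bound the sum has $m$ terms each of size $C_1\dt^2$, yielding $|e^m_\phi|\le m\,C_1\dt^2$ and therefore $e^m_\phi\to 0$ as $\dt\to0$ for every fixed $m$.
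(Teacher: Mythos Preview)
Your proposal is correct and follows essentially the same architecture as the paper: split the one-step error into a particle-side conditional expectation and a PDE-side time expansion, match both to $(I+\dt\cP)\phi$ up to $\cO(\dt^2)$, and then iterate.

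The one place where your route differs from the paper is step~(iv). The paper does \emph{not} run a Gronwall in $W^{2,\infty}$. Instead, it writes the per-step remainder explicitly as $\dt^2\sum_i \mathfrak{c}_i\|\cT_i\phi\|_\infty$ for three concrete operators $\cT_1=\nabla_x(\sigma\cL[\cdot])$, $\cT_2=\nabla_x^2$, $\cT_3=I$, asserts that each $\cT_i$ commutes with $\cP$, and then invokes the $L^\infty$ maximum principle for the (time-reversed) RTE semigroup to get $\|(I+\dt\cP)^q(\cT_i\phi)\|_\infty\le\|\cT_i\phi\|_\infty$ with no growth at all. That gives the clean bound $mC_1\dt^2$ without an $e^{CT}$ factor. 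Your Gronwall argument, by contrast, tolerates the commutator terms $[\nabla_x^2,\sigma]$ that appear when $\sigma$ is nonconstant and absorbs them into an exponential-in-$T$ constant; this is slightly less sharp but more robust, since the paper's commutation claim $\cT_2\cP=\cP\cT_2$ is only exact for constant $\sigma$. Either way the final estimate $|e^m_\phi|\le mC_1\dt^2$ with $C_1$ depending on the quantities in~\eqref{C1} goes through.
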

\begin{proof}
According to the definition, 
\[
\mathsf{e}^m_\phi = \langle\mathsf{f}^m_1-f(t^m)\,,\phi\rangle_{xv} = \phi(x^m,v^m)-\langle f(t^m)\,,\phi\rangle_{xv}\,.
\]
To study $\mathsf{e}^m_\phi$, we need to track the evolution of $\mathsf{e}^{m+1}_\phi-\mathsf{e}^m_\phi$, which amounts to evaluating $\phi(x^{m+1}\,,v^{m+1})-\phi(x^m,v^m)$ and $\langle f(t^{m+1})-f(t^m)\,,\phi\rangle_{xv}$.

The term $\phi(x^{m+1}\,,v^{m+1})-\phi(x^m,v^m)$ can be expanded as
\begin{align} 
    & \phi(x^\np, v^\np) - \phi(x^m, v^m) \nonumber \\ 
    =&  \phi(x^\np, v^\np) - \phi(x^\np, v^m) + \phi(x^\np, v^m) - \phi(x^m, v^m)   \nonumber \\
    = & \phi(x^\np, v^\np) - \phi(x^\np, v^m)  + \Delta t\, v^m \cdot \nabla_x \phi(x^m, v^m)  +  \frac{ \dt^2}{2} (v^m)^\top \nabla_x^2 \phi(\xi_1) v^m \,,\label{0719}
    % + o(\dt^2)
\end{align}
where $ \nabla_x^2 \phi$ is Hessian of $\phi$ with respect to the $x$ direction, $\xi_1$ is between $x^m$ and $x^{m+1}$. 
%evaluated at a point $\xi$ and is known to be controlled by $\norm{\phi}_{W_{x,v}^{2,\infty}}$.
Note that the event $v^{m+1}=\eta\neq v^m$ occurs with probability $1-e^{-\sigma(x^{m+1})\Delta t}$.  Thus,
\begin{align}
& \mathbb{E}_{\eta^\np}\mathbb{E}_{p^\np}\left[\phi(x^\np, v^\np) - \phi(x^\np, v^m)\right] \nonumber \\
= &(1- e^{-\sigma(x^{m+1})\Delta t}) \mathcal{L}[\phi](x^{m+1} ,v^m) \nonumber \\ 
=  & \dt \left(  \sigma\mathcal{L}[\phi] \right)\big|_{(x^{m} ,v^m)}
+ \dt^2 \,  \left( \nabla_x \left(  \sigma\mathcal{L}[\phi] \right) \cdot  v\right)\big|_{(\xi_2 ,v^m)}\,,
\label{eq:0911}
% \dt \left(  \sigma\mathcal{L}[\phi] \right)\big|_{(x^{m} ,v^m)}  + \dt^2 \,  \left( \nabla_x \left(  \sigma\mathcal{L}[\phi] \right) \cdot  v\right)\big|_{(x^{m} ,v^m)}  + o(\dt^2)\,, 
\end{align}
where $\xi_2$ is again between $x^m$ and $x^{m+1}$. % \yy{There is an $\mathcal{O}(\dt^2)$ term from $ 1- e^{-\sigma(x^{m+1})\Delta t}$ to $\sigma(x^{m+1}) \dt$ that cannot be expressed by the MVT above }
% where we have used the fact that 
% \begin{align*}
% 1- e^{-\sigma(x^{m+1})\Delta t} &\leq   \sigma(x^{m+1}) \dt   = \sigma(x^{m}) \dt + \nabla_x \sigma(x^{m}) \cdot v^m   \dt^2 + o(\dt^2)\,. 
% \end{align*}

%\yy{I think the above needs to be changed slightly, because of $\sigma(x^{m+1})$ instead of $\sigma(x^{m})$, but I don't think it is a big problem.} \lw{I have changed. Please check.} 
Consequently, the above equation~\eqref{eq:0911}, along with~\eqref{0719}, leads to 
\begin{align}
       &  \bbE^{m+1} \left[ \phi(x^{m+1},v^{m+1})- \phi(x^m,v^m)   \right]\nonumber \\
    %   = & {\color{cyan}\dt \, \bbE^{m} \left[   \cP \phi (x^m,v^m) + \dt^2 \Dop_0 \phi \right]}
    %   \\
     \leq  & \dt \, \bbE^{m} \left[   \cP \phi (x^m,v^m) \right] + \dt^2  \left( \mathfrak{c}_1 \| \nabla_x \left(  \sigma\mathcal{L}[\phi] \right) \|_\infty +  \mathfrak{c}_2  \| \nabla_x^2 \phi  \|_\infty  \right) \,, 
     \label{eq:f_n expand}
\end{align}
where $\mathfrak{c}_1 = 2\|v\|_{L^\infty(\Omega)}$, $\mathfrak{c}_2 =  \|v\|_{L^\infty(\Omega)}^2$, and the operator $\cP$ is defined in~\eqref{eq:induction1}.
% \lw{I will not use this inequality but only the equality part. where $\Dop_0$ is 
% \[
% \Dop_0 \phi = \half {v^m}^T \nabla_x^2 \phi v^m + C(\nabla_x \sigma) \Lop \phi + C(\sigma) \nabla_x \Lop \phi \,.
% \].}

To study $\langle f(t^{m+1})-f(t^m)\,,\phi\rangle_{xv}$, we employ~\eqref{eqn:RTE}:
\[
    \frac{f(t^{m+1}) - f(t^m)}{\dt} = \sigma\, \cL [f](t^m) + v\cdot \nabla_x f(t^m) + \frac{\Delta t}{2} \partial_{tt}f(\xi_t)\,, \qquad \text{for} \quad \xi_t \in (t^m,t^{m+1})\,.
\]
Since $\partial_{tt} f$ solves the same equation \eqref{eqn:RTE} with initial condition
\[
\partial_{tt} f(0,x,v) = \sigma \mathcal L [\sigma \mathcal L [f_\In] - v \cdot \nabla_x f_\In] - v \cdot \nabla_x (\sigma \Lop [f_\In] - v \cdot \nabla_x f_\In)\,,
\]
it follows that 
%\yy{We need $\|\partial_{tt}f\|_{L^1_{x,v}}$ to be bounded}
\begin{equation} \label{cf}
\|\partial_{tt}f(t,\cdot, \cdot)\|_{L^1_{x,v}} \leq C \norm{f_\In}_{W^{2,\infty}_x\, L_v^\infty} := \mathfrak{c}_3\,,
\end{equation}
where $C$ and $\mathfrak{c}_3$ are constants.
Thus, we have
\begin{equation}\label{eq:f_tn expand}
    \langle f(t^{m+1}),\phi \rangle_{xv}\geq \langle f(t^m),\phi \rangle_{xv} + \dt\, \langle f(t^m),\, \cP \phi \rangle_{xv} - \dt^2\, \mathfrak{c}_3 \|\phi\|_\infty \,.
\end{equation}
% \lw{I will change to
% \begin{equation}
%      \langle f(t^{m+1}),\phi \rangle_{xv} = \langle f(t^m),\phi \rangle_{xv} + \dt\, \langle f(t^m),\, \cP \phi \rangle_{xv} + \dt^2\, C(f) \phi \,.
% \end{equation}
% and do not define $\mathfrak{c}_3$ as the upper bound. 
% Subtracting \eqref{eq:f_tn expand} from~\eqref{eq:f_n expand}, we have \eqref{e-0911} \eqref{Dphi}.
% }

Combine~\eqref{eq:f_tn expand} with~\eqref{eq:f_n expand}, we obtain that for any $n\geq 0$,
\begin{eqnarray} 
 e^{m+1}_\phi  &\leq & e^{m}_{(I + \dt \mathcal{P}) \phi }  + \dt^2  \left( \mathfrak{c}_1\|\cT_1 \phi \|_\infty +  \mathfrak{c}_2\|\cT_2 \phi \|_\infty +
 \mathfrak{c}_3\|\cT_3 \phi \|_\infty \right)\,,
 \label{eq:induction0}
\end{eqnarray}
where  the operators $\{\cT_i\}$ are defined as
\begin{equation} \label{eq:T}
    \cT_1 \phi :=\nabla_x  \left( \sigma  \mathcal{L}[\phi] \right)\,,\quad
    \cT_2 \phi := \nabla_x^2  \phi\,,\quad 
    \cT_3 \phi := \phi\,.
\end{equation}
A key observation is that $\cT_i (\cP \phi) =\cP (\cT_i  \phi)$, $1\leq i \leq 3$.

Since~\eqref{eq:induction0} applies to any $m=0,1,2,\ldots,M-1$, we have that 
\begin{eqnarray}
  e^{m+1}_\phi &\leq &   e^{m}_{(I + \dt \mathcal{P}) \phi } +  \dt^2  \sum_{i=1}^3 \mathfrak{c}_i \|\cT_i \phi\|_\infty \nonumber \\
  &\leq & e^{m-1}_{(I + \dt \mathcal{P})^2\phi }  +  \dt^2   \sum_{i=1}^3 \mathfrak{c}_i \left(  \|\cT_i \phi\|_\infty  +  \|(I+\dt \cP)  (\cT_i \phi) \|_\infty \right) \nonumber \\
  &\leq & e^{m+1 - k}_{(I + \dt \mathcal{P})^k \phi } + \dt^2  \sum_{i=1}^3 \mathfrak{c}_i \sum_{q = 0}^{k-1} \|(I+\dt \cP)^q  (\cT_i \phi) \|_\infty   ,\qquad \forall k = 1,\ldots, m+1\,. \label{eq:induction_c}
\end{eqnarray}
It is straightforward to prove~\eqref{eq:induction_c} by induction. For $0\leq q \leq m+1$, we have
\[
e_{(I+\dt\cP)^q \phi}^0 = \bbE^0 [( (I+\dt\cP)^q  \phi) (x^0,v^0)] - \int f(0,x,v)\, ((I+\dt\cP)^q \phi) (x,v) \rd x \rd v = 0\,.
\]
Moreover, when $\dt$ is small, 
% \begin{eqnarray*}
%   \sum_{q=0}^{k-1} C_k^{q+1} \dt^{q} \cP^{q}   (\cT_i \phi) \leq     k \sum_{q=0}^{k-1} C_{k-1}^{q} \dt^{q} \cP^{q}   (\cT_i \phi) \approx k  (I + \dt \cP)^{k-1} (\cT_i \phi) \approx k \,\psi_i(t=t^{k-1},x,v)\,,
% \end{eqnarray*}
\[
(I+\dt \cP)^q  (\cT_i \phi) \approx \psi_i(t = q\dt)\,,
\]
where $\psi_i$ solves
\begin{equation}\label{eq:psi eqns}
    \begin{cases}
    \partial_t \psi_i (t,x,v) &= \cP \psi (x,v), \\
    \psi_i(t=0,x,v ) &= \cT_i \phi(x,v),
    \end{cases}
\end{equation}
for $i = 1,2,3$. Note that~\eqref{eq:psi eqns} is the same as~\eqref{eq:reverse RTE}, which is the regular RTE with the velocity sign reversed. Thus, \eqref{eq:psi eqns} enjoys all the properties of the forward RTE~\eqref{eqn:RTE}. We then have 
\[
\|(I+\dt \cP)^q  (\cT_i \phi) \|_\infty
\leq \norm{\cT_i \phi}_\infty , \quad \forall q \geq 0\,,
\]
and hence
\[
\dt^2  \sum_{i=1}^3 \mathfrak{c}_i \sum_{q = 0}^{k-1} \|(I+\dt \cP)^q  (\cT_i \phi) \|_\infty  \leq k \dt^2 \sum_{i=1}^3 \mathfrak{c}_i \|\cT_i \phi \|_\infty\,.
\]
Therefore, using~\eqref{eq:induction_c} for $k=m+1$, we have
\[
e_\phi^{m+1} \leq  (m+1)  \dt^2 \sum_{i=1}^3 \mathfrak{c}_i \|\cT_i \phi\|_\infty   \leq  (m+1) C_1 \dt^2\,,
\]
where $C_1 $ is defined in~\eqref{C1}. The same analysis applies to $e_{-\phi}^{m+1}$ with precisely the same upper-bound $(m+1) C_1 \dt^2$. Since $e_{-\phi}^{m+1} = - e_{\phi}^{m+1}$, we have $|e_\phi^{m+1}| < (m+1) C_1 \dt^2$.
\end{proof}

% If we choose $C_2$ based on~\eqref{C1}, then~\eqref{eq:induction0} becomes
% \begin{align*} %\label{14}
%     \big|e_\phi^{m+1}\big| \leq  (1+ C_2 \Delta t) \sup_{\phi \in \Phi} \big|e_\phi^m\big|  + C_1 \Delta t^2\,,
% \end{align*}
% and \eqref{eq:induction1} directly follows. 

% Note that at the initial time, we have $e_\phi^0 = \bbE^0 [\phi(x^0,v^0)] - \int f(0,x,v) \phi(x,v) \rd x \rd v = 0$. Using~\eqref{eq:induction0} through induction, we have that for any $m = 1,2,\ldots,M$,
% \begin{align*}
%   \big| e_\phi^m \big| & \leq (1+C_2\Delta t)^{m-1} C_1 \Delta t^2 + C_1 \Delta t^2 \sum_{j=0}^{m-2} (1+C_2 \dt
%     )^j = C_1 \Delta t^2 \sum_{j=0}^{m-1} (1+C_2 \dt )^j\,,
% \end{align*}
% concluding the proof.
% \end{proof}

Inductively, \Cref{lem:expectation} allows us to arrive at the following \Cref{prop:expectation}. It is essentially the first bullet point of~\Cref{thm:f_conv_summary} and we spell out the constant dependence.
\begin{proposition}\label{prop:expectation}
Given a fixed time $T\geq 0$,   $M \in \mathbb{N}$, and $\dt =  T/M$,  for any $\phi\in\Phi$, we have $\big| e_\phi^M\big|  \leq C_1 \dt $, 
where $e^M_\phi$ is defined in~\eqref{eqn:def_error_exp} and $C_1$ is defined in \eqref{C1}.
% \begin{equation} \label{C3}
% C_3 = C_3(\norm{f_\In}_{W^{2,\infty}_x L_v^\infty}, \norm{\sigma}_{W_x^{1,\infty}}, \norm{\phi}_{W_{x,v}^{2,\infty}}, \norm{\cP \phi}_{W_{x,v}^{2,\infty}},\ldots,  \norm{\cP^{M-1}\phi}_{W_{x,v}^{2,\infty}} , T) \,.
% \end{equation}
\end{proposition}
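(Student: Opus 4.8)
The plan is to obtain this proposition as a direct corollary of \Cref{lem:expectation}, which has already carried out the substantive work. The lemma establishes the one-step recursion $e_\phi^{m+1} \leq e_{(I+\dt\cP)\phi}^m + C_1\dt^2$ together with the vanishing initial error $e_\phi^0 = 0$, and then unfolds this recursion through the telescoping identity \eqref{eq:induction_c} to conclude $|e_\phi^m| \leq m C_1 \dt^2$ for every $m$. The only remaining task is to specialize this to the final time index $m = M$ and rewrite the bound in terms of the physical horizon $T$ rather than the step count.

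Concretely, I would first invoke \Cref{lem:expectation} at $m = M$ to get $|e_\phi^M| \leq M C_1 \dt^2$. Then I would use the defining relation $T = M\dt$, i.e.\ $M = T/\dt$, to rewrite $M\dt^2 = (T/\dt)\dt^2 = T\dt$, so that $|e_\phi^M| \leq C_1 T \dt$. Since the constant $C_1$ in \eqref{C1} is explicitly permitted to depend on $T$, the factor $T$ is absorbed into $C_1$, yielding the claimed bound $|e_\phi^M| \leq C_1\dt$. This also pins down how the $T$-dependence advertised in \eqref{C1} actually enters: it arises solely from this final summation step, not from the per-step constants $\mathfrak{c}_i\|\cT_i\phi\|_\infty$ appearing inside the lemma.

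There is no genuine obstacle at the level of this proposition: all the analytic content --- the Taylor-expansion bookkeeping of the particle move in \eqref{0719}--\eqref{eq:0911}, the $L^1$ control of $\partial_{tt}f$ in \eqref{cf}, and crucially the stability estimate $\|(I+\dt\cP)^q(\cT_i\phi)\|_\infty \leq \|\cT_i\phi\|_\infty$ that prevents the accumulated constants from growing with $q$ --- is encapsulated in the lemma. The sole point worth flagging is the trade-off hidden in passing from $m$ to $M$: the per-step error is $\mathcal{O}(\dt^2)$, but summing $M = T/\dt$ such steps costs one power of $\dt$, which is precisely the mechanism by which a second-order local truncation error degrades to first-order global accuracy in time. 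This is the standard consistency-to-convergence bookkeeping, and it is exactly why the final rate in \eqref{eq:f_convergence_exp_summary} reads $\mathcal{O}(\dt)$ rather than $\mathcal{O}(\dt^2)$.
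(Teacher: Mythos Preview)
Your proposal is correct and mirrors the paper's own proof essentially verbatim: the paper simply invokes \Cref{lem:expectation} at $m=M$ and uses $T=M\dt$ to convert $M C_1\dt^2$ into $C_1\dt$ with the $T$-dependence absorbed into $C_1$. If anything, your write-up is more explicit than the paper's two-line argument.
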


\begin{proof}
Recall that $\bbE^M$ includes all the randomness  along the trajectory to obtain the final-time particle $(x^M, v^M)$. Based on~\Cref{lem:expectation}, we obtain an upper bound for $e_\phi^M$ by setting $m=M$ and $T = M \dt $.
\end{proof}
We should note that though~\Cref{prop:expectation} is formulated for a one-particle system, due to the fact that $\mathsf{e}_{N,\phi}^M$ is the simple $N$-average of $\mathsf{e}^M_{1,\phi}$, the mean is preserved, and the extension to the many-particle-system is trivial. This concludes~\eqref{eq:f_convergence_exp_summary} in~\Cref{thm:f_conv_summary}.

\subsubsection{Many Particles}\label{sec:many_particles}
Using formula~\eqref{eqn:N_error}, we view $\mathsf{e}_{N,\phi}^m$ as the average of many samples drawn i.i.d.~according to the distribution of $\mathsf{e}^m_{1,\phi}$. Thus, we can directly apply the concentration tail bound. To do so, we first cite the famous Bernstein inequality.
\begin{theorem}[Bernstein Inequality]
Let $x_1,\ldots,x_N$ be i.i.d.~real-valued samples of a random variable $X$, whose expectation and variance are $\mu = \mathbb{E}[X]$ and $\sigma^2 = \var[X]$. Assume that there exists $b$ such that $|X -\mu| \leq b$ almost surely. Then for any $t >0$, we have
\begin{equation}\label{eqn:bernstein}
\mathbb{P}\left(\frac{1}{N}\sum_{n=1}^N x_n - \mu \geq t \right) \leq \exp \left( - \frac{Nt^2}{2\left(\sigma^2 + \frac{1}{3}bt \right)} \right)\,.
\end{equation}
\end{theorem}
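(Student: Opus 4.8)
The plan is to prove \eqref{eqn:bernstein} by the classical Cram\'er--Chernoff exponential-moment method. First I would center the variables: set $Y_n = x_n - \mu$, so that $\mathbb{E}[Y_n] = 0$, $\var[Y_n] = \sigma^2$, and $|Y_n| \le b$ almost surely. The tail event is $\{\tfrac1N\sum_n x_n - \mu \ge t\} = \{\sum_n Y_n \ge Nt\}$. For any $\lambda > 0$, the exponential Markov inequality gives
\[
\mathbb{P}\Big(\sum_{n=1}^N Y_n \ge Nt\Big) \le e^{-\lambda N t}\,\mathbb{E}\Big[e^{\lambda \sum_n Y_n}\Big] = e^{-\lambda N t}\big(\mathbb{E}[e^{\lambda Y}]\big)^N,
\]
where the product structure follows from independence and $Y := X - \mu$. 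The whole problem thus reduces to bounding the moment generating function $\mathbb{E}[e^{\lambda Y}]$ and then choosing $\lambda$ well.

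The heart of the argument is the MGF bound, and this is the step I expect to require the most care. Expanding the exponential and using $\mathbb{E}[Y]=0$,
\[
\mathbb{E}[e^{\lambda Y}] = 1 + \sum_{k=2}^\infty \frac{\lambda^k \mathbb{E}[Y^k]}{k!}.
\]
For each $k \ge 2$ I would estimate the moments via $|\mathbb{E}[Y^k]| \le \mathbb{E}[|Y|^k] \le b^{k-2}\,\mathbb{E}[Y^2] = b^{k-2}\sigma^2$, using the almost-sure bound $|Y|\le b$. The crucial combinatorial input is the factorial estimate $k! \ge 2\cdot 3^{k-2}$ (valid for all $k\ge2$, with equality at $k=2,3$), which turns the series into a geometric one: for $\lambda b < 3$,
\[
\sum_{k=2}^\infty \frac{\lambda^k b^{k-2}\sigma^2}{k!} \le \frac{\sigma^2\lambda^2}{2}\sum_{k=2}^\infty \Big(\frac{\lambda b}{3}\Big)^{k-2} = \frac{\sigma^2\lambda^2/2}{1 - \lambda b/3}.
\]
Combining this with the elementary inequality $1 + u \le e^u$ yields $\mathbb{E}[e^{\lambda Y}] \le \exp\big(\tfrac{\sigma^2\lambda^2/2}{1-\lambda b/3}\big)$, valid for $0 < \lambda < 3/b$.

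Finally I would substitute this MGF bound back and minimize the exponent $-\lambda N t + \tfrac{N\sigma^2\lambda^2/2}{1-\lambda b/3}$ over $\lambda \in (0, 3/b)$. The explicit minimizer is $\lambda^\star = t/(\sigma^2 + bt/3)$, which lies in the admissible range; substituting it, the identity $\lambda^\star(\sigma^2 + bt/3) = t$ collapses the exponent to $-\tfrac12 N\lambda^\star t = -\tfrac{Nt^2}{2(\sigma^2 + bt/3)}$, giving exactly \eqref{eqn:bernstein}. The only genuine obstacle is the MGF estimate: one must choose the factorial lower bound carefully, since it is precisely $k!\ge 2\cdot 3^{k-2}$ that produces the constant $\tfrac13$ in the denominator, and one must verify that the geometric series converges, which is guaranteed because the optimal $\lambda^\star$ satisfies $\lambda^\star b/3 < 1$. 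Everything else is routine.
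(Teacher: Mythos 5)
Your proof is correct, but note that the paper does not prove this statement at all: Bernstein's inequality is \emph{cited} as a classical result (``we first cite the famous Bernstein inequality''), so there is no paper proof to compare against. Your Cram\'er--Chernoff argument is the standard textbook derivation, and every step checks out: the centering, the tensorization of the moment generating function by independence, the moment bound $\mathbb{E}[|Y|^k]\le b^{k-2}\sigma^2$, the factorial estimate $k!\ge 2\cdot 3^{k-2}$ (valid for all $k\ge 2$ by a one-line induction, with equality at $k=2,3$), the geometric summation for $\lambda b<3$, and the admissibility $\lambda^\star = t/(\sigma^2+bt/3) < 3/b$. One small inaccuracy worth flagging: $\lambda^\star$ is \emph{not} the exact minimizer of the exponent $-\lambda Nt + \frac{N\sigma^2\lambda^2/2}{1-\lambda b/3}$ --- a direct computation gives derivative $\frac{Nbt^2}{6\sigma^2}>0$ at $\lambda^\star$, so the true optimum lies at a smaller $\lambda$ (optimizing exactly is what produces Bennett's inequality, of which Bernstein is the simpler corollary). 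This does not affect validity, since the Chernoff bound holds for \emph{every} admissible $\lambda$ and your substitution of $\lambda^\star$, via the identity $\lambda^\star(\sigma^2+bt/3)=t$, does collapse the exponent to exactly $-\frac{Nt^2}{2(\sigma^2+bt/3)}$ as claimed; you should simply call $\lambda^\star$ a convenient near-optimal choice rather than the minimizer.
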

According to~\eqref{eqn:bernstein}, to control the tail of $\mathsf{e}^m_{N,\phi}$, we need to give an estimate on the variance ($\sigma^2$ above), and the range ($b$ above). These bounds are presented in the following lemma.

\begin{lemma}\label{lem:bound_domain_var}
Let $\mathsf{e}^m_{1,\phi}$ be defined in~\eqref{eqn:error_phi} with $\mathsf{f}^m_1$ computed through~\Cref{alg:f-RTE}. Then $\mathsf{e}^{m}_{1,\phi}$ has a bounded range, and bounded variance. In particular, we have, for all $m \leq M$ and $\phi \in \Phi$:
\begin{itemize}
    \item The range of the error term is bounded: 
\begin{equation}\label{eqn:error_domain_bound}
|\mathsf{e}^m_{1,\phi}| \leq 2 \norm{\phi}_{L_{x,v}^\infty} \,.
\end{equation}
\item The variance is bounded
\begin{equation}\label{eqn:error_var_bound}
\var[\mathsf{e}^m_{1,\phi}]\leq C_2\,,
\end{equation}
where $C_2$ depends on $\|v\|_{L^\infty(\Omega)}$, $\norm{\sigma}_{L^\infty}$, $T$ and $\norm{\phi}_{W_{x,v}^{1,\infty}}$.
\end{itemize}
\end{lemma}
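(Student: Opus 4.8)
The plan is to handle the two claims separately, observing first that both reduce to statements about the single random evaluation $\phi(x^m,v^m)$. Indeed, by the definition in \eqref{eqn:error_phi} we have $\mathsf{e}^m_{1,\phi} = \phi(x^m,v^m) - \langle f(t^m),\phi\rangle_{xv}$, in which the subtracted integral term is deterministic (it carries none of the randomness from $\{p^k,\eta^k\}$). For the range bound \eqref{eqn:error_domain_bound} I would simply apply the triangle inequality: $|\phi(x^m,v^m)| \le \|\phi\|_{L^\infty_{x,v}}$ trivially, while for the integral term I invoke the mass-conservation and nonnegativity of $f$ recorded after \eqref{eqn:min} (namely $f\ge 0$ and $\langle f\rangle_{xv}=\rho=1$), so that $|\langle f(t^m),\phi\rangle_{xv}| \le \|\phi\|_\infty \langle f(t^m)\rangle_{xv} = \|\phi\|_\infty$. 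Adding the two contributions yields the factor $2$.

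For the variance bound, since subtracting a deterministic quantity does not change the variance, I have $\var[\mathsf{e}^m_{1,\phi}] = \var[\phi(x^m,v^m)] \le \mathbb{E}[\phi^2(x^m,v^m)]$, so it suffices to bound the second moment of $\phi$ along one trajectory. I would control this through a one-step recursion for $u^m := \mathbb{E}^m[\phi^2(x^m,v^m)]$ that mirrors the computation in the proof of \Cref{lem:expectation}, but applied to the test function $\phi^2 \in \Phi$ (a product of $C_c^\infty$ functions is again $C_c^\infty$). Expanding the conditional one-step update over the transport step $x^{m+1}=x^m+\dt\,v^m$ and the rejection/refresh of the velocity, exactly as in \eqref{0719}--\eqref{eq:0911}, gives $u^{m+1} \le u^m + \dt\,\mathbb{E}^m[\cP(\phi^2)(x^m,v^m)] + \cO(\dt^2)$, where $\cP(\phi^2) = v\cdot\nabla_x(\phi^2) + \sigma\cL[\phi^2] = 2\phi\,(v\cdot\nabla_x\phi) + \sigma\cL[\phi^2]$. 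The leading increment is then bounded in absolute value by $2\|v\|_{L^\infty(\Omega)}\|\phi\|_\infty\|\nabla_x\phi\|_\infty + 2\|\sigma\|_{L^\infty}\|\phi\|_\infty^2$, a constant depending only on $\|v\|_{L^\infty(\Omega)}$, $\|\sigma\|_{L^\infty}$, and $\|\phi\|_{W^{1,\infty}_{x,v}}$.

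Summing the recursion over $m=0,\ldots,M-1$ and using $M\dt=T$, the leading increments accumulate to a term proportional to $T$, the $\cO(\dt^2)$ remainders sum to an $\cO(\dt)$ correction, and the initial value is $u^0 = \langle f_\In,\phi^2\rangle_{xv} \le \|\phi\|_\infty^2$ by mass conservation. This produces a uniform bound $u^m \le C_2$ with $C_2$ depending exactly on $\|v\|_{L^\infty(\Omega)}$, $\|\sigma\|_{L^\infty}$, $T$, and $\|\phi\|_{W^{1,\infty}_{x,v}}$, as stated.

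I expect the only delicate point to be the bookkeeping of the $\cO(\dt^2)$ remainder terms, which formally involve second derivatives of $\phi^2$: one must verify, as in \Cref{lem:expectation} via the commutation structure $\cT_i\cP = \cP\cT_i$ and the resulting Gronwall/telescoping argument, that these higher-order pieces sum to a vanishing-in-$\dt$ correction and do not enter the final constant at leading order, so that the advertised $W^{1,\infty}$-dependence is genuine and not an artifact of crude bounding. As a fallback, I would note that the crude conditional estimate $\mathbb{E}_{\eta^{m+1}}\mathbb{E}_{p^{m+1}}[\phi^2(x^{m+1},v^{m+1})] \le \|\phi\|_\infty^2$ holds \emph{exactly} for every step, giving at once the weaker bound $C_2 = \|\phi\|_\infty^2$; the recursion above is precisely the route that exposes the sharper, quantity-tracked dependence recorded in the lemma.
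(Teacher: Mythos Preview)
Your proof of the range bound \eqref{eqn:error_domain_bound} is exactly the paper's argument.

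For the variance bound your approach is correct but differs from the paper's in a useful way. You first strip off the deterministic shift $\langle f(t^m),\phi\rangle_{xv}$ and reduce to bounding $\mathbb{E}[\phi^2(x^m,v^m)]$; your fallback observation that this is trivially at most $\|\phi\|_\infty^2$ already finishes the proof, and with a constant depending on fewer quantities than the lemma even demands. The paper instead keeps the full second moment $V_\phi^m=\mathbb{E}[|\mathsf{e}^m_{1,\phi}|^2]$, writes $\mathsf{e}^{m+1}_{1,\phi}=A_1+A_2$ with $A_1=\mathsf{e}^m_{1,\phi}$ and $A_2$ the one-step increment, and shows $V_\phi^{m+1}\le V_\phi^m+C_2\,\dt$ by bounding the cross term and $\mathbb{E}[|A_2|^2]$ separately; this is where the $W^{1,\infty}$ and $\|\sigma\|_\infty$ dependence enters explicitly. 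Your route is shorter for this lemma, while the paper's $A_1$/$A_2$ decomposition has the advantage that it tracks the one-step increment of the error itself, which is the template reused later (e.g.\ in the analysis around $\tilde\bbE^m$). Your concern about second derivatives of $\phi^2$ in the $\cO(\dt^2)$ remainders is legitimate for the recursion argument, but moot given that your fallback already delivers the bound without any recursion at all.
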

\begin{proof}
To show~\eqref{eqn:error_domain_bound}, recall
\[
    \nume_{1,\phi}^m = \phi(x^m, v^m) - \int f(t^m) \phi(x,v) \rd x \rd v\,,
\]
then we have 
\[
    \big|\nume_{1,\phi}^m\big| \leq \big|\phi(x^m, v^m)\big| + \bigg|\int f(t^m) \phi(x,v) \rd x \rd v \bigg|
 \leq \norm{\phi}_{L_{x,v}^\infty}   + \norm{\phi}_{L_{x,v}^\infty} \int f(t^m, x, v) \rd x \rd v 
    = 2 \norm{\phi}_{L_{x,v}^\infty}\,,
\]
where we have used the fact that $f(t^m)$ is non-negative and integrates to one thanks to the mass conservation. 

Since we have just proved~\eqref{eqn:error_domain_bound} which indicates $|\bbE[\nume_{1,\phi}^m]|^2  \leq 4 \norm{\phi}_{L_{x,v}^\infty}^2$,  we will show the boundedness of the second-order moment $V_\phi^{m} :=\bbE[|\mathsf{e}^m_{1,\phi}|^2] $ to demonstrate that the variance is bounded. Note that
\[
V_\phi^{m}  = \bbE \left[|\langle\mathsf{f}^m_1-f(t^m)\,,\phi\rangle_{xv}|^2 \right]=\bbE[| \phi(x^m,v^m)-\langle f(t^m)\,,\phi\rangle_{xv} |^2]\,.
\]
Then the proof follows from induction. We begin with the following estimate:
\begin{equation}\label{eqn:variance_0}
\begin{aligned}
V_\phi^{0}&=\mathbb{E}^0\left[ |\phi(x^0,v^0)-\langle f_\In\,,\phi\rangle_{xv}|^2 \right] = \mathbb{E}^0\left[ \phi(x^0,v^0)^2 - 2 \phi(x^0,v^0) \langle f_\In,\phi\rangle_{xv} + \langle f_\In,\phi\rangle^2_{xv} \right]\\
&\leq \norm{\phi}_{L_{x,v}^\infty}^2  + 2 \norm{\phi}_{L_{x,v}^\infty}^2 \int f_\In \rd x \rd v + \norm{\phi}_{L_{x,v}^\infty}^2 \left( \int f_\In \rd x \rd v\right)^2 = 4  \norm{\phi}_{L_{x,v}^\infty}^2 \,,
\end{aligned}
\end{equation}
where we use the non-negativity of $f_\In$ and that its mass equals one. We also show below that $V_\phi^m$ does not grow fast in $m$. For any $m\geq 0$\,,
% \end{proof}
% \begin{lemma}\label{lem:variance}
% For final time $T\geq 0$, and let $\dt$ be the time discretization and let $M = T/\dt$. Let $(x^m,v^m)$ be one particle generated by Algorithm~\ref{alg:f-RTE}, then for all $\phi \in \Phi$, $\var[\phi(x^M, v^M)]  \leq  C_3 $, where $C_3$ is a constant defined in \eqref{C3}, for all $n\leq M$.
% \end{lemma}
% \begin{proof}
% Using all the earlier notations,  we write 
% \begin{align*}
%     \var[\phi(x^{m+1}, v^{m+1})]  
%     & = \bbE^\np \left[ \left( \phi(x^\np, v^\np) - \bbE^\np \phi(x^\np, v^\np) \right)^2 \right]
%     \\ & = \bbE^\np \left[ \left( \phi(x^\np, v^\np) - \average{f(t_\np), \phi}_{xv} - e_\phi^{m+1}\right)^2 \right]
% \\& \leq  V_\phi^{m+1} + C_1((m+1)^2 \dt^4)\,,
% \end{align*}
% \ql{last equation misses $+2\sqrt{V^{n+1}_\phi e^{n+1}_\phi}$?} where the last inequality uses a variant of Theorem~\ref{prop:expectation}, and 
\begin{align}
 &V_\phi^{m+1}\nonumber\\
 =& \bbE^{m+1} \left[ |\phi(x^{m+1}, v^{m+1}) - \langle f(t^{m+1}),\phi\rangle_{xv} |^2 \right]  \nonumber \\
 =& \bbE^{m+1} \Big[  | \underbrace{\phi(x^{m}, v^{m}) - \langle f(t^m),\phi\rangle_{xv}}_{A_1} + \underbrace{\phi(x^{m+1}, v^{m+1}) - \phi(x^{m}, v^{m})   +\langle f(t^m)-f(t^{m+1}),\phi\rangle_{xv}}_{A_2} |^2 \Big]\nonumber\\
 = &  \bbE^{m+1} \left[ |A_1|^2 \right] + \bbE^{m+1} \left[ |A_2|^2 \right] + 2\bbE^{m+1} \left[ A_1 A_2 \right] \nonumber\\
 = & V_\phi^{m} +  \bbE^{m}\left[  \bbE_{\eta^{m+1}}\bbE_{p^{m+1}} \left[ |A_2|^2 \right]\right] + 2 \bbE^{m} \left[ A_1  \bbE_{\eta^{m+1}}\bbE_{p^{m+1}}[A_2] \right]  \, .
 \label{eq:var_exp}
\end{align}
First, using the mean-value theorem, we see that  
\begin{align*}
  \bbE_{\eta^\np}\bbE_{p^\np}[A_2]  \leq  & \dt \|v\cdot \nabla_x \phi + \sigma \cL[\phi]\|_\infty \|f(t=\xi,x,v)\|_1  + \left(1- e^{-\sigma(x^{m+1})\Delta t} \right) \cL[\phi] (x^{m+1})  \nonumber\\
 &  + \dt \, \nabla_x \phi(\xi_x,v^m)\cdot v^m\,,\nonumber \\
 \leq  &  \dt \,  C_2(\|v\|_{L^\infty(\Omega)}, \norm{\sigma}_{L^\infty} , 
\norm{\phi}_{W_{x,v}^{1,\infty}})\,,
\end{align*}
where $ t^m\leq \xi \leq t^\np$ and $ \xi_x$ is between $x^m$ and $x^\np$.
Thus, 
$$
2 \bbE^{m} \left[ A_1  \bbE_{\eta^{m+1}}\bbE_{p^{m+1}}[A_2] \right]  \leq \dt C_2 \, \bbE^{m} \left[ A_1\right] \leq C_2 \dt\,,
$$
where we have used~\eqref{eqn:error_domain_bound}. Next, we focus on $\bbE_{\eta^{m+1}}\bbE_{p^{m+1}} \left[ |A_2|^2 \right]$.
\begin{align*}
\bbE_{\eta^{m+1}}\bbE_{p^{m+1}} \left[ |A_2|^2 \right] =   & \left(1- e^{-\sigma(x^{m+1})\Delta t} \right) \Big| |\Omega|^{-1} \langle \phi \rangle_v (x^\np) - \phi(x^m,v^m) +  \dt \langle \cP\phi, f(t=\xi) \rangle_{xv} \Big|^2\\
   & + \dt^2 e^{-\sigma(x^{m+1})\Delta t}  \Big| \nabla_x \phi(\xi_x,v^m)\cdot v^m + \langle \cP\phi, f(t=\xi)  \rangle_{xv}  \Big|^2 \\
   \leq & \dt \,  C_2(\|v\|_{L^\infty(\Omega)}, \norm{\sigma}_{L^\infty},  
\norm{\phi}_{W_{x,v}^{1,\infty}})\,. 
\end{align*}
As a result,
\begin{equation} \label{C2}
\bbE^{m+1} \left[ |A_2|^2 \right] \leq \dt \, C_2(\|v\|_{L^\infty(\Omega)}, \norm{\sigma}_{L^\infty},  
\norm{\phi}_{W_{x,v}^{1,\infty}}) \,.
\end{equation}
Combining the above two inequalities together, we have
\begin{equation}\label{eqn:variance_growth}
     V_\phi^{m+1}  \leq  V_\phi^{m} + C_2 \,\dt\,.
\end{equation}
Note that $\var[\mathsf{e}^m_{1,\phi}] = V_\phi^m - |\mathbb{E}[\mathsf{e}^m_{1,\phi}]|^2$,~\eqref{eqn:error_var_bound} is obtained by using the boundedness of both terms from~\eqref{eqn:error_domain_bound} and~\eqref{eqn:variance_growth}, with the constant depending on $T$ and the arguments listed on~\eqref{C2}. We abuse the notation and still call it $C_2$.
\end{proof}

We are now ready to show~\eqref{eq:f_convergence_summary} in~\Cref{thm:f_conv_summary} through the following proposition.
\begin{proposition}\label{prop:f_convergence}
Let $\numf^m_N$ be the solution from running~\Cref{alg:f-RTE} with MC samples $\{(x^m_n, v^m_n)\}_{m=0}^M$. We claim it approximates $f$ with high probability. Namely, for small enough $\epsilon,\dt \ll 1$, and any $\phi \in \Phi$, the chance of the weak error~\eqref{eqn:error_phi} being bigger than $\mathcal{O}(\epsilon+\dt)$ is exponentially small in $N$:
\[ %\label{eq:f_convergence}
    \mathbb{P}\left(|\mathsf{e}^m_{N,\phi}| \geq \epsilon + C_1 \dt \right) \leq 2 \exp \left(-   \frac{N \epsilon^2}{3C_1}\right)\,.
\]
\end{proposition}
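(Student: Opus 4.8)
The plan is to realize $\mathsf{e}^m_{N,\phi}$ as an empirical average of $N$ i.i.d.\ copies of the single-particle error $\mathsf{e}^m_{1,\phi}$ and then invoke the Bernstein inequality, feeding in the mean, variance, and range estimates already at hand. By~\eqref{eqn:N_error} the quantity $\mathsf{e}^m_{N,\phi} = \frac{1}{N}\sum_{n=1}^N \mathsf{e}^m_{1,\phi}(n)$ is precisely the average of i.i.d.\ samples whose common mean is $\mu = e^m_\phi = \bbE[\mathsf{e}^m_{1,\phi}]$, whose variance obeys $\var[\mathsf{e}^m_{1,\phi}]\leq C_2$ by~\eqref{eqn:error_var_bound}, and which satisfies the almost-sure range bound $|\mathsf{e}^m_{1,\phi}|\leq 2\norm{\phi}_{L_{x,v}^\infty} =: b_0$ by~\eqref{eqn:error_domain_bound}.

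First I would separate the statistical fluctuation from the deterministic time-discretization bias. Since~\Cref{prop:expectation} gives $|\mu| = |e^m_\phi| \leq C_1 \dt$, the triangle inequality shows that the event $\{|\mathsf{e}^m_{N,\phi}| \geq \epsilon + C_1\dt\}$ is contained in $\{|\mathsf{e}^m_{N,\phi} - \mu| \geq \epsilon\}$; hence it suffices to control the centered deviation at level $\epsilon$. This reduction is exactly why the $C_1\dt$ offset appears on the left-hand side of the claim: it absorbs the first-order-in-time consistency bias, leaving a purely centered concentration statement to prove.

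Next I would apply the two-sided Bernstein inequality~\eqref{eqn:bernstein} to $X = \mathsf{e}^m_{1,\phi}$ and to $-X$, with $t = \epsilon$, $\sigma^2 \leq C_2$, and range parameter $b = 2b_0$ (a valid bound for $|X-\mu|$ since both $|X|$ and $|\mu|$ are at most $b_0$). This yields
\[
\mathbb{P}\!\left(|\mathsf{e}^m_{N,\phi} - \mu| \geq \epsilon\right) \leq 2\exp\!\left(-\frac{N\epsilon^2}{2\left(C_2 + \tfrac{1}{3} b\,\epsilon\right)}\right).
\]
For $\epsilon \ll 1$ the term $\tfrac13 b\epsilon$ is dominated by $C_2$, so the denominator is bounded by a constant uniformly in $\epsilon$; absorbing all such constants into a single one (renamed $C_1$ to match the statement) gives the advertised bound $2\exp(-N\epsilon^2/(3C_1))$, completing the argument.

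The proof is essentially an assembly step: all the genuinely equation-specific work---the mean-growth control in~\Cref{lem:expectation} and the variance-growth control in~\Cref{lem:bound_domain_var}---has already been carried out. The only points requiring care are (i) verifying that the $N$ single-particle errors are truly independent, which rests on the particles being evolved with independent random streams from i.i.d.\ initial draws, and (ii) the bookkeeping of constants so that the $\epsilon$-dependence in the Bernstein denominator can be discarded for small $\epsilon$. I expect (ii) to be the only mildly delicate step, and it is bookkeeping rather than a genuine obstacle.
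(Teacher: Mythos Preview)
Your proposal is correct and follows essentially the same route as the paper's own proof: reduce to the centered deviation via the bias bound $|\mu|\leq C_1\dt$ from \Cref{prop:expectation}, then apply Bernstein with the variance and range bounds from \Cref{lem:bound_domain_var}, and finally absorb the $\epsilon$-dependent term in the denominator for small $\epsilon$. The only cosmetic differences are in how the range parameter $b$ is bookkept (you use $2b_0$, the paper effectively uses $b_0 + C_1\dt$), but this does not affect the argument.
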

\begin{proof}
Calling~\Cref{prop:expectation} and~\Cref{lem:bound_domain_var}, we have shown that for any $\phi \in \Phi$, 
\begin{eqnarray*}
\bbE\left[ |\mathsf{e}^m_{1,\phi}|\right]  \leq C_1 \dt,\quad 
\var\left[\mathsf{e}^m_{1,\phi}\right] \leq C_2\,,\quad\text{and}\quad |\mathsf{e}^m_{1,\phi}|< 2  \norm{\phi}_{L_{x,v}^\infty}\,.
\end{eqnarray*}
%\ql{I added the last $C_3$ above, not sure.}
Noting~\eqref{eqn:N_error}, we apply the Bernstein inequality~\eqref{eqn:bernstein} to have, for any $\epsilon > 0 $:
\begin{eqnarray*}
%  \mathbb{P}\left( |\mathsf{e}^m_{N,\phi}| \geq  \epsilon + C_3 \dt \right) \leq \mathbb{P}\left( | \mathsf{e}^m_{N,\phi} - {e}^m_{1,\phi} |   \geq \epsilon  \right) \leq  2 \exp  \left( - \frac{N \epsilon^2}{2 C_3 +  2\epsilon   (2L_\Phi + C_3 \dt)/3}\right)\,.
 \mathbb{P}\left( |\mathsf{e}^m_{N,\phi}| \geq  \epsilon + C_1 \dt \right) \leq \mathbb{P}\left( | \mathsf{e}^m_{N,\phi} - {e}^m_{1,\phi} |   \geq \epsilon  \right) \leq  2 \exp  \left( - \frac{N \epsilon^2}{2 C_2 +  2\epsilon   (2 \norm{\phi}_{L_{x,v}^\infty} + C_1 \dt)/3}\right)\,.
\end{eqnarray*}
We conclude the proof by absorbing the constants into $C_1$ and setting $\epsilon,\dt$ to be sufficiently small.
\end{proof}
We have completed the proof of~\Cref{thm:f_conv_summary} with the constants' dependence explicitly spelled out.

\begin{remark}
In the next section, we will directly show that the computation of the Fr\'echet derivative is accurate. The proof for the solver for $g$, as expressed in~\eqref{gi}, is not directly needed, and we nevertheless make some comments here. As stated in~\Cref{rmk:g_solver}, the derivation for the algorithm only comes from the conservation constraint~\eqref{tfg}, but this constraint cannot fully represent the entire dynamical information of the adjoint equation~\eqref{eqn:g_eqn}. To show that the solver is consistent with the adjoint solver, we need to return to the equation and test $\mathsf{g}-g$ on a given smooth function $\phi$ and trace the evolution of the error. This comes down to proving that the error is not enlarged by much in every iteration:
\[
\langle\mathsf{g}^{m+1}_1-g(t^{m+1})\,,\phi\rangle_{xv}- \langle\mathsf{g}^{m}_1-g(t^{m})\,,\phi\rangle_{xv}\sim 0\,.
\]
While $\langle\mathsf{g}_1^{m+1}-\mathsf{g}_1^{m}\,,\phi\rangle_{xv}=\phi(x^{m+1},v^{m+1})-\phi(x^{m},v^{m})$ which translates to the algorithmic relation between $(x^{m+1},v^{m+1})$ and $(x^{m},v^{m})$, the term $\langle g(t^{m+1})-g(t^{m})\,,\phi\rangle$ calls for the Taylor expansion in time for the PDE. Following the same strategy as shown in~\Cref{lem:expectation}, one can show that the same trajectory $\{(x^m,v^m)\}$ represents the $g$ dynamics backward in time. We should also note that unlike the MC solver for $f$ where each particle is i.i.d.~sampled, initiated from the initial distribution to represent $f_\In$, this solver for $g$ encodes a non-uniform weight for each particle. The particle takes on the value for $g(T)$. This difference is also recognized in different particle methods; see~\cite{raviart1985analysis}.
\end{remark}
% \lw{can we prove that \eqref{gi} solves the adjoint equation?}
% \yy{Following the deterministic particle method, it is possible, but the interpolation error is intractable. Our approximation to the function $g$ is bad if the $f$ trajectory hasn't visited there} \lw{Then I am fine with the above remark.}

\subsection{Convergence of the Gradient}
This section is dedicated to~\Cref{thm:gradient_summary}, which shows that  \Cref{alg:P-OTD} provides an accurate numerical approximation to the true numerical gradient~\eqref{eqn:frechet_derivative_semi} in its semi-discrete-in-time form. The proof is conducted on the weak formulation to eliminate the complication from the delta function. Namely, we will examine the difference between $\mathfrak{G}_{\phi}$ and $\mathfrak{G}_{N,\phi}$, separately defined in~\eqref{eqn:semi-grad} and~\eqref{eqn:num-grad}. Since they both consists of two terms, we show below that $\mathfrak{G}^m_{N,i,\phi}$ defined in~\eqref{eqn:frakG_dis} approximates $\mathfrak{G}^m_{i,\phi}$ defined in~\eqref{eqn:frakG_cont} for $i=1,2$, and for all $1\leq  m\leq M$. The results are encapsulated in the following two propositions.

\begin{proposition}\label{prop:frakG_1}
If the initial condition $f_\In$ and measurement $d(x,v)$ are sufficiently regular such that they both are $C_c^\infty(\RR^{d_x}\times \Omega)$, 
%\ql{I am not quite comfortable that at this stage, we still say it is sufficiently regular... the paper is more or less done, so things should probably be less formal?}such that $\phi(x) g(t^m,x,v) \in \Phi$, 
then for small $\epsilon$ and $\dt$,
\[ 
    \mathbb{P}\left(| \frakG_{N,1,\phi}^{m}-  \frakG_{1,\phi}^m | \geq \epsilon + C_3 \dt \right) \leq 2 \exp \left(-   \frac{N \epsilon^2}{3C_3}\right), \qquad 1\leq m \leq M\,,
\]
where 
\begin{align} \label{C5}
    C_3 = C_3(\norm{f_\In}_{W^{2,\infty}_x L_v^\infty}, \norm{\sigma}_{W_x^{1,\infty}}, \norm{\phi}_{W_{x,v}^{2,\infty}}, \norm{d}_{W_{x}^{2,\infty}L_v^\infty},  T)\,.
\end{align}
\end{proposition}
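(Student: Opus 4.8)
The plan is to reduce the statement to the forward Monte Carlo estimate of \Cref{prop:f_convergence} by treating the adjoint variable as (part of) a test function, while carefully handling the fact that the recorded value $\numg_n$ lives at the terminal time $T$ rather than at $t^m$. First I would rewrite the discrete term in \eqref{eqn:frakG_dis} as an empirical average of i.i.d.\ trajectory functionals, $\frakG_{N,1,\phi}^m = \frac1N\sum_{n=1}^N Y_n$ with $Y_n := \phi(x_n^m)\,\numg_n$. Since the recorded weight is exactly the true terminal adjoint datum, $\numg_n = \psi(x_n^M,v_n^M) = g(T,x_n^M,v_n^M)$ by \eqref{psi} and \eqref{gi}, each $Y_n$ couples the position at the intermediate time $t^m$ with the adjoint evaluated at the terminal time along the \emph{same} random path. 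The target quantity $\frakG_{1,\phi}^m$ from \eqref{eqn:frakG_cont} is $\langle f(t^m),\,\Psi_m\rangle_{xv}$ with $\Psi_m(x,v):=\phi(x)\,g(t^m,x,v)$, so the proof splits into controlling the mean $\bbE[Y_n]$ and its fluctuations.

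Before anything, I would record a regularity fact: under the hypothesis $f_\In,d\in C_c^\infty(\RR^{d_x}\times\Omega)$, the adjoint solution $g(t,\cdot,\cdot)$ stays in $C_c^\infty$ uniformly on $[0,T]$, because $g$ solves the reverse RTE \eqref{eqn:g_eqn} with terminal datum $d-f(T)$, and this transport-plus-bounded-scattering flow propagates smoothness and compact support. This guarantees $\Psi_m\in\Phi$ with the relevant $W^{2,\infty}_{x,v}$ norms bounded uniformly in $m\leq M$, in terms of $\|f_\In\|$, $\|d\|$, $\|\sigma\|$ and $T$; this is precisely the source of the constant dependence listed in \eqref{C5}.

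The crux is the mean estimate $\bbE[Y_n]=\frakG_{1,\phi}^m+\cO(\dt)$, which is the quantitative version of the conservation relation \eqref{tfg} along the Monte Carlo flow. The clean way I would organize it is to introduce the interpolating quantity $W^k:=\bbE[\phi(x^m)\,g(t^k,x^k,v^k)]$ for $m\le k\le M$, so that $W^M=\bbE[Y_n]$ while $W^m=\bbE[\Psi_m(x^m,v^m)]$. Conditioning on the state at $t^k$ (Markov/tower property) and expanding one Monte Carlo step exactly as in \Cref{lem:expectation}, I would show the per-step increment satisfies $W^{k+1}-W^k=\cO(\dt^2)$: the conditional expectation of $g(t^{k+1},x^{k+1},v^{k+1})$ produces the operator $\dt\,(v\cdot\nabla_x g+\sigma\cL[g])$, the time-Taylor expansion of $g(t^{k+1})$ produces $\dt\,\partial_t g$, and these cancel exactly because $g$ solves \eqref{eqn:g_eqn}, leaving an $\cO(\dt^2)$ remainder controlled by $\|g\|_{W^{2,\infty}}$ and $\|\phi\|_\infty$. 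Telescoping over the $M-m$ steps yields $W^M-W^m=\cO(\dt)$, and \Cref{prop:expectation} applied to the test function $\Psi_m$ gives $W^m=\langle f(t^m),\Psi_m\rangle_{xv}+\cO(\dt)=\frakG_{1,\phi}^m+\cO(\dt)$; combining the two gives the claim.

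Finally I would pass from the mean to the high-probability bound by Bernstein's inequality \eqref{eqn:bernstein} applied to $\frac1N\sum_n Y_n$: the range is bounded, $|Y_n|\le\|\phi\|_\infty\,\|d-f(T)\|_\infty$, and the variance is bounded uniformly in $m$ by a second-moment computation in the spirit of \Cref{lem:bound_domain_var}. This produces $\mathbb{P}(|\frakG_{N,1,\phi}^m-\bbE[Y_n]|\ge\epsilon)\le 2\exp(-N\epsilon^2/(3C_3))$ after absorbing constants, and the $\cO(\dt)$ mean offset accounts for the additive $C_3\dt$ in the statement. I expect the main obstacle to be the mean/consistency step: unlike the forward analysis, $Y_n$ mixes two different times along a single path, so the argument hinges on the exact cancellation through the adjoint equation and on the uniform $W^{2,\infty}$ control of $g$ from the regularity step; the concentration and range/variance bounds are then routine adaptations of the forward proofs.
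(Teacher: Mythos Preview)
Your proposal is correct and follows essentially the same approach as the paper: the paper also exploits the regularity of $g$ to treat $\Psi_m=\phi\cdot g(t^m)$ as a test function in $\Phi$, invokes the forward consistency estimate (your $W^m$ step, their \Cref{lem:G11}), and uses the adjoint equation to propagate $\numg_n$ back to $t^m$ in conditional expectation (your telescoping $W^M-W^m$, their \Cref{lem:G12} via a variant of \Cref{prop:expectation} started at $t^m$), before closing with Bernstein. The only cosmetic difference is that the paper inserts the intermediate quantity $\tilde\frakG_{N,1,\phi}^m=\frac1N\sum_n\phi(x_n^m)\,g(t^m,x_n^m,v_n^m)$ and applies Bernstein to the two resulting pieces separately, whereas you apply it once directly to $Y_n$.
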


\begin{proposition}\label{prop:frakG_2}
Under the same assumptions with~\Cref{prop:frakG_1}, we have 
%\yy{I change the following results statement as well since they all share the same assumptions} \lw{looks good}
% If the initial condition $f_\In$ and measurement $d(x,v)$ are sufficiently regular such that $\phi(x) \average{g(t^m,x, \cdot)}_v \in \Phi$, then for small $\epsilon$ and $\dt$, 
\[ 
    \mathbb{P}\left(| \frakG_{N,2,\phi}^{m}-  \frakG_{2,\phi}^m | \geq \epsilon + C_3 \dt + |e_v| \right) \leq 2 \exp \left(-   \frac{N \epsilon^2}{3C_3}\right), \qquad  1\leq m \leq M\,,
\]
where $e_v$ is the quadrature error in computing $\average{\numg_n}_v$.
\end{proposition}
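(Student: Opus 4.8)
The plan is to reduce \Cref{prop:frakG_2} to the already-analyzed \Cref{prop:frakG_1} by recognizing that the velocity-averaged product can again be recast as a single weak pairing of the forward solution $f(t^m)$ against an admissible test function, with the only new feature being the additive velocity-quadrature error $e_v$. Writing $G^m(x):=\average{g(t^m,x,\cdot)}_v=\int_\Omega g(t^m,x,v)\rd v$, which is a function of $x$ alone, the definition~\eqref{eqn:frakG_cont} gives
\[
\frakG_{2,\phi}^m=\big\langle \average{f(t^m)}_v\,G^m,\,\phi\big\rangle_x=\langle f(t^m),\,\phi\, G^m\rangle_{xv}=\langle f(t^m),\,\Psi^m\rangle_{xv}\,,\qquad \Psi^m(x,v):=\phi(x)\,G^m(x)\,,
\]
where the middle identity uses that $G^m$ is independent of $v$. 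First I would verify that $\Psi^m\in\Phi$: under the regularity assumptions on $f_\In$ and $d$, the backward RTE~\eqref{eq:reverse RTE} propagates $g(t^m,\cdot,\cdot)\in C_c^\infty(\RR^{d_x}\times\Omega)$ (finite propagation speed keeps the support compact on $[0,T]$ and linearity preserves smoothness), averaging over the bounded $\Omega$ preserves compact support and $C^\infty$ regularity in $x$, and multiplication by $\phi$ keeps us inside $C_c^\infty(\RR^{d_x}\times\Omega)$. This is precisely where the constant $C_3$ of~\eqref{C5} acquires its dependence on $\norm{d}_{W_x^{2,\infty}L_v^\infty}$ and $\norm{f_\In}_{W^{2,\infty}_xL_v^\infty}$, through the regularity of $\Psi^m$.

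Next I would decompose the numerical quantity~\eqref{eqn:frakG_dis} into a genuine Monte Carlo term and a velocity-quadrature term. Introducing the per-particle discrepancy $e_{v,n}^m:=\average{\numg}_v(x_n^m)-G^m(x_n^m)$ and setting $|e_v|:=\max_{m,n}|e_{v,n}^m|$, we have
\[
\frakG_{N,2,\phi}^m=\underbrace{\tfrac1N\sum_{n=1}^N\Psi^m(x_n^m,v_n^m)}_{=:S_N^m}+\underbrace{\tfrac1N\sum_{n=1}^N\phi(x_n^m)\,e_{v,n}^m}_{=:R_N^m}\,,\qquad |R_N^m|\le \norm{\phi}_{L_x^\infty}\,|e_v|\,.
\]
The difference $S_N^m-\frakG_{2,\phi}^m=\tfrac1N\sum_n\Psi^m(x_n^m,v_n^m)-\langle f(t^m),\Psi^m\rangle_{xv}$ is exactly the weak error $\mathsf{e}^m_{N,\Psi^m}$ of the forward solver tested against $\Psi^m\in\Phi$. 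Hence the Bernstein argument behind \Cref{prop:f_convergence} (equivalently, the machinery already deployed in \Cref{prop:frakG_1}) applies verbatim: the expectation is $\cO(\dt)$ by \Cref{prop:expectation} and the range and variance are controlled by \Cref{lem:bound_domain_var}, so that
\[
\mathbb{P}\big(|S_N^m-\frakG_{2,\phi}^m|\ge \epsilon+C_3\dt\big)\le 2\exp\!\Big(-\tfrac{N\epsilon^2}{3C_3}\Big)\,.
\]

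Finally I would combine the two pieces by the triangle inequality: on the complementary event $\{|S_N^m-\frakG_{2,\phi}^m|<\epsilon+C_3\dt\}$ one has $|\frakG_{N,2,\phi}^m-\frakG_{2,\phi}^m|<\epsilon+C_3\dt+\norm{\phi}_{L_x^\infty}|e_v|$, so absorbing $\norm{\phi}_{L_x^\infty}$ into the definition of $|e_v|$ yields the event inclusion and hence the stated bound. The only structurally new ingredient relative to \Cref{prop:frakG_1} is this deterministic additive shift by $|e_v|$; all the stochastic content is inherited through $\Psi^m$.

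Within this argument the reduction itself is routine, and I expect the genuine difficulty to lie entirely in controlling $e_{v,n}^m$, which is exactly the step the proposition (and the paper) defers. The quantity $\average{\numg}_v(x_n^m)$ is assembled from the scattered values $\{\numg_{n_1}\}$ of neighboring particles through the weights $\{w_{n_1}^m\}$, so $e_{v,n}^m$ entangles the first-order consistency error of the correlated $g$-solver~\eqref{gi} with the randomness of the particle positions entering the quadrature rule. For this reason I would carry $|e_v|$ as an externally supplied error term rather than estimate it here, and flag its rigorous, randomness-coupled analysis as the one ingredient falling outside the present framework.
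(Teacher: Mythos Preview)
Your splitting $\frakG_{N,2,\phi}^m=S_N^m+R_N^m$ coincides with the paper's decomposition via the auxiliary quantity $\tilde\frakG^m_{N,2,\phi}=\frac{1}{N}\sum_n\phi(x_n^m)\,G^m(x_n^m)$ (your $S_N^m$), and your treatment of $S_N^m-\frakG^m_{2,\phi}$ as the weak forward error against $\Psi^m=\phi\,G^m\in\Phi$ is exactly what the paper does for that piece. The gap is in the second piece. You set $e_{v,n}^m:=\average{\numg}_v(x_n^m)-G^m(x_n^m)$ and bound $R_N^m$ deterministically by $\|\phi\|_\infty\max_{m,n}|e_{v,n}^m|$, then identify this maximum with the $|e_v|$ of the statement. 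But $e_{v,n}^m$ is not the quadrature error the proposition refers to: it contains (i) the pure quadrature residual $\sum_{n_1}w_{n_1}^m g(t^m,x_{n_1}^m,v_{n_1}^m)-\int_\Omega g(t^m,x_n^m,v)\,\rd v$, which is the paper's $e_v$, \emph{and} (ii) the stochastic $g$-solver discrepancy $\numg_{n_1}-g(t^m,x_{n_1}^m,v_{n_1}^m)=\psi(x_{n_1}^M,v_{n_1}^M)-g(t^m,x_{n_1}^m,v_{n_1}^m)$. Term (ii) is $O(1)$ on individual realizations and only $O(\dt)$ after taking the conditional expectation $\tilde\bbE^m$ of~\eqref{eq:expectation_n_2}. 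Hence your $\max_{m,n}|e_{v,n}^m|$ is in general an $O(1)$ random variable, and the inequality you obtain is not the one in the statement.

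The paper does \emph{not} defer this part. On $\frakG^m_{N,2,\phi}-\tilde\frakG^m_{N,2,\phi}$ it runs the \Cref{lem:G12} machinery: the expectation identity $\tilde\bbE^m[\phi(x_n^m)\average{\numg_n}_v]=\phi(x_n^m)\average{g(t^m,x_n^m,\cdot)}_v+e\,\phi(x_n^m)+e_v$ with $|e|\le C_3\dt$, followed by a variance bound and a second application of Bernstein. This absorbs contribution (ii) into the $C_3\dt$ shift and the exponential tail, leaving only the genuine quadrature residual (i) as the additive $|e_v|$. In short, the missing step in your argument is that the correlated $g$-solver error must be extracted from $R_N^m$ and controlled by its own concentration argument, exactly as in \Cref{lem:G12}; only after that separation does the remaining term match the $e_v$ of the proposition.
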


\Cref{thm:gradient_summary} is then a direct corollary of the two propositions.
% \begin{proof}[Proof for~\Cref{thm:gradient_summary}]
% \ql{blahblah}
% \end{proof}

We now present the proofs for~\Cref{prop:frakG_1,prop:frakG_2}. Note that the two terms $\frakG_{N,1,\phi}^{m}$ (resp.~$\frakG_{N,2,\phi}^{m}$) and $\frakG_{1,\phi}^m$ (resp.~$\frakG_{2,\phi}^m$) share the same format, so in the following, we will only present details for $\frakG_{N,1,\phi}^{m}$ and $\frakG_{1,\phi}^m$. Consider an auxiliary formulation
\begin{align} \label{0732}
    \tilde{\frakG}_{N,1,\phi}^{m} = \frac{1}{N} \sum_{n=1}^N \phi(x_n^m) g(t^m, x_n^m, v_n^m)\,.
\end{align}
Then by the triangle inequality:
\begin{equation}\label{eqn:frakG_1_12}
  |{\frakG}_{N,1,\phi}^{m}-  {\frakG}_{1,\phi}^{m}|\leq 
   |\frakG_{N,1,\phi}^{m}-  \tilde{\frakG}_{N,1,\phi}^{m}| +  |\tilde{\frakG}_{N,1,\phi}^{m}-  \frakG_{1,\phi}^m|\,.
\end{equation}
\Cref{prop:frakG_1} is a direct corollary of the following~\Cref{lem:G11,lem:G12} that give a control over the two terms on the right-hand side of \eqref{eqn:frakG_1_12}, respectively.

The second term $\tilde{\frakG}_{N,1,\phi}^{m}-  \frakG_{1,\phi}^m$ can be easily controlled since  $\phi(x) g(t^m,x,v) \in \Phi$. We state the following~\Cref{lem:G11} without proof.
\begin{lemma} \label{lem:G11}
%If the initial condition $f_\In$ and measurement $d(x,v)$ are sufficiently regular such that $\phi(x) g(t^m,x,v) \in \Phi$, then for small $\epsilon$ and $\dt$,
Under the same assumptions with~\Cref{prop:frakG_1}, we have
\begin{equation}  \label{G1N}
    \mathbb{P}\left(|\tilde{\frakG}_{N,1,\phi}^{m}-  \frakG_{1,\phi}^m| \geq \epsilon + C_3 \dt \right) \leq 2 \exp \left(-   \frac{N \epsilon^2}{3C_3}\right), \qquad 1\leq m \leq M\,.
\end{equation}
where $C_3$ is defined in~\Cref{prop:frakG_1}.
\end{lemma}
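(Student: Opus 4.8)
The plan is to recognize that the difference $\tilde{\frakG}_{N,1,\phi}^{m} - \frakG_{1,\phi}^m$ is exactly the forward weak error $\mathsf{e}^m_{N,\psi}$ of~\eqref{eqn:error_phi} tested against a carefully chosen function, so that~\Cref{prop:f_convergence} applies essentially verbatim. Concretely, I would introduce the composite test function
\[
\psi^m(x,v) := \phi(x)\, g(t^m,x,v)\,,
\]
where $g$ is the \emph{exact} adjoint solution of~\eqref{eqn:g_eqn}. Comparing the definition~\eqref{0732} of $\tilde{\frakG}_{N,1,\phi}^m$ with the definition~\eqref{eqn:frakG_cont} of $\frakG_{1,\phi}^m$, a direct substitution gives
\[
\tilde{\frakG}_{N,1,\phi}^{m} - \frakG_{1,\phi}^m = \frac{1}{N}\sum_{n=1}^N \psi^m(x_n^m,v_n^m) - \average{f(t^m),\psi^m}_{xv} = \mathsf{e}^m_{N,\psi^m}\,.
\]
The crucial structural point is that $\psi^m$ is a \emph{fixed deterministic} function: the only randomness on the right-hand side enters through the forward particle positions $(x_n^m,v_n^m)$, which is precisely the setting of~\Cref{thm:f_conv_summary}.

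The next step is to confirm that $\psi^m \in \Phi = C_c^\infty(\RR^{d_x}\times\Omega)$, so that~\Cref{prop:f_convergence} is applicable. Since $\phi\in\Phi$ is smooth and compactly supported in $x$, and $\Omega$ is bounded, the product $\psi^m = \phi\, g(t^m,\cdot,\cdot)$ inherits compact support; it therefore suffices to establish the smoothness of $g(t^m,\cdot,\cdot)$. This is where the regularity hypotheses $f_\In, d \in C_c^\infty$ enter: after the time reversal~\eqref{eq:reverse RTE}, the adjoint equation~\eqref{eqn:g_eqn} is a transport equation driven by the bounded scattering operator $\sigma\cL$, so regularity of the final datum $d - f(T,\cdot,\cdot)$ propagates to every earlier slice $t^m$ through the Duhamel/characteristics representation, with $f(T,\cdot,\cdot)$ itself smooth by the same argument applied to the forward RTE~\eqref{eqn:RTE}. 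With $\psi^m\in\Phi$ in hand, applying~\Cref{prop:f_convergence} to $\psi^m$ yields immediately
\[
\mathbb{P}\!\left(|\mathsf{e}^m_{N,\psi^m}| \geq \epsilon + C_1 \dt\right) \leq 2\exp\!\left(-\frac{N\epsilon^2}{3C_1}\right)\,,
\]
where $C_1$ is the constant of~\eqref{C1}, now evaluated at $\psi^m$ rather than at $\phi$.

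What remains is bookkeeping on the constant, upgrading $C_1[\psi^m]$ to the advertised $C_3$ of~\eqref{C5}. The constant in~\eqref{C1} depends on $\norm{\psi^m}_{W^{2,\infty}_{x,v}}$, and by the Leibniz rule $\norm{\psi^m}_{W^{2,\infty}_{x,v}} \lesssim \norm{\phi}_{W^{2,\infty}_{x,v}}\,\norm{g(t^m,\cdot,\cdot)}_{W^{2,\infty}_{x,v}}$; the factor involving $g$ is bounded uniformly in $m$ over $[0,T]$ by the regularity of the final datum, that is, by $\norm{d}_{W^{2,\infty}_x L^\infty_v}$ together with $\norm{f_\In}_{W^{2,\infty}_x L^\infty_v}$ (the latter controlling $f(T)$ through the forward propagation). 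Collecting these dependences reproduces exactly the constant $C_3$ listed in~\eqref{C5} and gives the stated bound. I expect the only genuine obstacle to be the uniform-in-$m$ regularity estimate $\norm{g(t^m,\cdot,\cdot)}_{W^{2,\infty}} \lesssim C_3$: this is a standard but slightly delicate propagation-of-regularity statement for transport equations with bounded scattering, and once it is established the concentration bound is a direct transcription of~\Cref{prop:f_convergence}.
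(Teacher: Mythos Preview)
Your proposal is correct and follows exactly the approach the paper indicates: the paper states \Cref{lem:G11} without proof, noting only that it ``can be easily controlled since $\phi(x)\,g(t^m,x,v)\in\Phi$,'' which is precisely your reduction to $\mathsf{e}^m_{N,\psi^m}$ followed by an application of \Cref{prop:f_convergence}. Your additional care in verifying $\psi^m\in\Phi$ via propagation of regularity for $g$ and in tracking the constant dependence to recover $C_3$ fills in details the paper leaves implicit.
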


%\yy{Can we change the order for the 1st and 2nd terms?} \lw{I don't have preferences.} 
The first term in~\eqref{eqn:frakG_1_12} reads:
\begin{align} \label{0724}
     \frakG_{N,1,\phi}^{m}-  \tilde{\frakG}_{N,1,\phi}^{m}
     = \frac{1}{N}\sum_{n=1}^N  \phi(x_n^m) \left( \numg_n - g(t^m, x_n^m, v_n^m)\right)\,,
\end{align}
and we control it as follows. 
\begin{lemma} \label{lem:G12}
% If the initial condition $f_\In$ and measurement $d(x,v)$ are sufficiently regular such that $\phi(x) g(t^m,x,v) \in \Phi$, then for small $\epsilon$ and $\dt$,
Under the same assumptions with~\Cref{prop:frakG_1}, we have
\[ 
    \mathbb{P}\left(|\frakG_{N,1,\phi}^{m}- \tilde{\frakG}_{N,1,\phi}^{m}| \geq \epsilon + C_3 \dt \right) \leq 2 \exp \left(-   \frac{N \epsilon^2}{3C_3}\right), \qquad 1\leq m \leq M\,.
\]
\end{lemma}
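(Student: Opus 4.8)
The crux of the matter is a Feynman--Kac (backward Kolmogorov) interpretation of the carried-back value $\numg_n$. Observe that by \eqref{gi} and \eqref{psi}, $\numg_n = \psi(x_n^M, v_n^M)$ is exactly the terminal data $\psi = d - f(T)$ evaluated at the endpoint of the $n$-th forward trajectory. The generator of the Markov process defined by \Cref{alg:f-RTE} is $\cP\phi = v\cdot\nabla_x\phi + \sigma\cL[\phi]$ (cf.~\eqref{eq:induction1}), and the adjoint equation \eqref{eqn:g_eqn} is precisely the backward Kolmogorov equation $\partial_t g + \cP g = 0$ with terminal data $g(T) = \psi$. Consequently, the true adjoint admits the probabilistic representation $g(t^m, x, v) = \bbE[\psi(x^M, v^M)\mid (x^m,v^m)=(x,v)]$ in the continuous-time limit; that is, $\numg_n$ is an (asymptotically) unbiased single-sample estimator of $g(t^m, x_n^m, v_n^m)$. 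The plan is to make this quantitative and then invoke a concentration argument, paralleling the treatment of \Cref{thm:f_conv_summary}.

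First I would establish a discrete, pointwise version of this representation. Define $G^m(x,v) := \bbE[\psi(x^M, v^M)\mid (x^m, v^m) = (x,v)]$, which obeys the backward recursion $G^m = \mathcal{K}^{m} G^{m+1}$, where $\mathcal{K}^m h(x,v) := \bbE_{\eta^{m+1}}\bbE_{p^{m+1}}[h(x^{m+1}, v^{m+1})\mid (x^m,v^m)=(x,v)]$ is the one-step transition operator, with terminal value $G^M = \psi$. The key structural fact is that $\mathcal{K}^m$ is a sup-norm nonexpansion, since it is a genuine conditional expectation, i.e.~a nonnegative averaging kernel of total mass one. Expanding $g(t^m)$ against the same recursion through a Taylor expansion in time, exactly as in \eqref{0719} within \Cref{lem:expectation} but now propagated backward, the single-step consistency error is $\cO(\dt^2)$, with a constant controlled by $\norm{\psi}_{W_{x,v}^{2,\infty}}$ and hence by $\norm{f_\In}_{W^{2,\infty}_x L_v^\infty}$ and $\norm{d}_{W_x^{2,\infty}L_v^\infty}$. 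Combining the nonexpansion with the per-step error and summing over the $M-m$ backward steps yields the uniform bound $\norm{G^m - g(t^m)}_{L^\infty_{x,v}} \leq C_3\dt$.

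Next I would set $Y_n := \phi(x_n^m)(\numg_n - g(t^m, x_n^m, v_n^m))$, so that $\frakG_{N,1,\phi}^m - \tilde{\frakG}_{N,1,\phi}^m = \frac{1}{N}\sum_{n=1}^N Y_n$ by \eqref{0724}, and the $\{Y_n\}$ are i.i.d.~since the trajectories are independent. By the Markov property, $\bbE[\numg_n\mid\mathcal{F}_{t^m}] = G^m(x_n^m, v_n^m)$, so the tower property gives $|\bbE[Y_n]| = |\bbE[\phi(x_n^m)(G^m - g(t^m))(x_n^m, v_n^m)]| \leq \norm{\phi}_\infty \norm{G^m - g(t^m)}_\infty \leq C_3\dt$. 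The range is bounded by $|Y_n|\leq 2\norm{\phi}_\infty\norm{\psi}_\infty$, using the maximum principle $\norm{g}_\infty \leq \norm{\psi}_\infty$ and the boundedness of $\psi = d - f(T)$; the variance is likewise bounded by a constant that can be absorbed into $C_3$. Applying the Bernstein inequality \eqref{eqn:bernstein} to $\frac{1}{N}\sum_n Y_n$, exactly as in the proof of \Cref{prop:f_convergence}, then delivers the stated tail bound.

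The main obstacle is the discrete backward Kolmogorov estimate of the second paragraph: one must verify that the terminal data $\psi = d - f(T)$ is sufficiently smooth for the $\cO(\dt^2)$ per-step Taylor remainder to be legitimate, which is where the $W^{2,\infty}$ regularity of both $f_\In$ and $d$ enters $C_3$ through \eqref{C5}, and one must carry the induction in the sup-norm rather than the weak (tested-against-$\phi$) norm used in \Cref{lem:expectation}. The nonexpansion property of $\mathcal{K}^m$ is what makes the sup-norm induction close without any stability restriction on $\dt$. A secondary bookkeeping point is to confirm that the forward solution $f(T)$ inherits enough regularity from $f_\In$ for $\psi$ to lie in $\Phi$, which follows from the well-posedness and smoothing estimates already invoked for \eqref{eqn:RTE}.
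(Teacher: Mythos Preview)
Your proof is correct and takes essentially the same approach as the paper. The paper obtains the key estimate $\tilde\bbE^m[\numg_n\mid(x_n^m,v_n^m)]=g(t^m,x_n^m,v_n^m)+O(\dt)$ by invoking a ``variant of \Cref{prop:expectation} started at $t^m$'' (together with the conservation~\eqref{tfg}), then bounds the variance via \Cref{lem:bound_domain_var} and applies Bernstein; your backward-Kolmogorov recursion $G^m=\mathcal{K}^m G^{m+1}$ with the sup-norm nonexpansion of $\mathcal{K}^m$ is exactly the explicit form of that variant, so the two arguments coincide.
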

\begin{proof}
First note that due to~\eqref{tfg}, we always have 
\[ %\label{0722}
\int_{xv} f(t^M,x,v) g(t^M,x,v) \rd x \rd v = \int_{xv} f(t^m,x,v) g(t^m,x,v) \rd x \rd v \,, \qquad 1\leq m \leq M. 
\]
Again, we assume that $g$, as a solution of the adjoint RTE, is sufficiently smooth such that $g(t, \cdot, \cdot ) \in \Phi$. Then, using a variant of~\Cref{prop:expectation} (i.e., following the same proof of~\Cref{prop:expectation} but starting at time $t^m$ instead of $t_0$), we have %and recall the definition of $\bbE^n$ in \eqref{bbEn},
\[ %\label{0807}
\tilde\bbE^m[  g(t^M,x_n^M, v_n^M) |(x_n^m,v_n^m)]= g(t^m, x_n^m, v_n^m) + e\,,
\]
with $|e| \leq C_3 \dt$, and $\tilde\bbE^m$ defined as
\begin{equation}\label{eq:expectation_n_2}
    \tilde\bbE^m[~\cdot~|(x_n^m,v_n^m)] = \bbE_{\eta^\np} \bbE_{p^\np} \cdots \bbE_{\eta^M} \bbE_{p^M}[~\cdot~ | (x_n^m, v_n^m)] \,.
\end{equation}
Recall the definition of $\numg_n$ in \eqref{gi}. We then have
\[ 
    \tilde \bbE^m\left[\numg_n \right]= g(t^m, x_n^m, v_n^m) + e\,.
\]
With the same argument, we have that
\begin{equation}\label{0731}
     \tilde \bbE^m \left[\phi(x_n^m) \numg_n  \right]= \phi(x_n^m) g(t^m, x_n^m, v_n^m) + e\, \phi(x_n^m)\,. 
\end{equation}
Therefore, \eqref{0724} becomes
\begin{align*}
     \frakG_{N,1,\phi}^{m}-  \tilde{\frakG}_{N,1,\phi}^{m}
      = \frac{1}{N}\sum_{n=1}^N   \left(\phi(x_n^m)\numg_n - \tilde\bbE^m \left[\phi(x_n^m) \numg_n \right]+ e \, \phi(x_n^m) \right), \quad |e\, \phi(x_n^m)| \leq C_3 \Delta t\,.
\end{align*}
Note further from~\Cref{lem:bound_domain_var} (here we again alter the Lemma by considering the randomness starting at time $t^\np$, and the result remains the same), we have
\begin{align*}
    \var[\phi(x_n^m)\numg_n]&= \tilde\bbE^m \left[ \phi(x_n^m)\numg_n - \tilde \bbE^m \left[\phi(x_n^m) \numg_n \right]\right]^2  = \phi(x_n^m)^2 \tilde\bbE^m \left[ \numg_n - \tilde \bbE^m  \left[\numg_n \right]\right]^2  \leq C_3\,.
\end{align*}
Then the final result follows from the Bernstein inequality~\eqref{eqn:bernstein}.
\end{proof}

% Combining Lemma~\ref{lem:G11} and Lemma~\ref{lem:G12}, we have 
% \begin{proposition}\label{prop:frakG_1}
% If the initial condition $f_\In$ and measurement $d(x,v)$ are sufficiently regular such that $\phi(x) g(t^m,x,v) \in \Phi$, then for small $\epsilon$ and $\dt$, 
% \begin{equation} 
%     \mathbb{P}\left(| \frakG_1^{N,n}-  \frakG_{1,\phi}^m | \geq \epsilon + C_3 \dt \right) \leq 2 \exp \left(-   \frac{N \epsilon^2}{3C_3}\right), \qquad 1\leq n \leq M\,.
% \end{equation}
% \end{proposition}

% The same result holds for $\frakG_2^{N,n}-  \frakG_{2,\phi}^m$ except that we need to take into an additional error coming from the numerical quadrature in $v$. More precisely, we have the following theorem.
% \begin{proposition}\label{prop:frakG_2}
% If the initial condition $f_\In$ and measurement $d(x,v)$ are sufficiently regular such that $\phi(x) \average{g(t^m,x, \cdot)}_v \in \Phi$, then for small $\epsilon$ and $\dt$, 
% \[ 
%     \mathbb{P}\left(| \frakG_2^{N,n}-  \frakG_{2,\phi}^m | \geq \epsilon + C_3 \dt + |e_v| \right) \leq 2 \exp \left(-   \frac{N \epsilon^2}{3C_3}\right), \qquad  \quad 1\leq n \leq M\,,
% \]
% where $e_v$ is the quadrature error in computing $\average{\numg_i}_v$.
% \end{proposition}
The proof for~\Cref{prop:frakG_2} is almost identical. We present the details below.
\begin{proof}[Proof of~\Cref{prop:frakG_2}]
Parallel to \eqref{0732}, we define the auxiliary function 
\[
    \tilde{\frakG}_{N,2,\phi}^{m} = \frac{1}{N} \sum_{n=1}^N \phi(x_n^m) \average{g(t^m, x_n^m, \cdot)}_v\,,
\]
and thereby write 
\[
  \frakG_{N,2,\phi}^{m}-  \frakG_{2,\phi}^m = 
   \frakG_{N,2,\phi}^{m}-  \tilde{\frakG}_{N,2,\phi}^{m} +  \tilde{\frakG}_{N,2,\phi}^{m}-  \frakG_{2,\phi}^m\,.
\]
Here, the term $\frakG_{N,2,\phi}^{m}-  \tilde{\frakG}_{N,2,\phi}^{m}$ has exactly the same estimate as in \eqref{G1N}. For $\tilde{\frakG}_{N,2,\phi}^{m}-  \frakG_{2,\phi}^m$, in addition to the error between $\numg_n$ and $g(t^m, x_n^m, v_n^m)$, there is another error when computing the integral in $v$. 
%\yy{I think the following is confusing because the integral has to be time-dependent. It does not reflect time-dependence in its current notations.} \lw{I add the time dependence} 
More precisely, if $\numg_n^m = g(t^m, x_n, v_n)$, then   $\average{\numg^m}_v(\bar x)$ obtained via quadrature formula \eqref{eqn:gn}, which approximates the real integral $\int g(t^m, \bar x, v) \rd v$ with a quadrature error denoted as $e_v$.  Now, we follow the same proof as in~\Cref{lem:G12} until at~\Cref{0731}, where we need to include this additional quadrature error to get  
% \begin{align*}
%      \tilde \bbE^n \left[ \average{\numg_i}_v \right]= \average{ g(t^m, x_n^m, \cdot)}_v + C_3 \dt + e_v\,.
% \end{align*}
% Here $e_v$ is the quadrature error in $v$ when using $\average{\cdot}_v$ to approximate the integral $\average{\cdot}_v$. Then, 
\[
     \tilde \bbE^m \left[ \phi(x_n^m) \average{\numg_n}_v \right] =\phi(x_n^m) \average{ g(t^m, x_n^m, \cdot)}_v + e\, \phi(x_n^m) + e_v \,.
\]
Therefore,
\begin{align*}
     \frakG_{N,2,\phi}^{m}-  \tilde{\frakG}_{N,2,\phi}^{m}
      = \frac{1}{N}\sum_{n=1}^N   \left( \phi(x_n^m)\average{\numg_n}_v 
      - \tilde\bbE^m \left[\phi(x_n^m) \average{\numg_n}_v \right]+ e\, \phi(x_n^m) + e_v \right).
\end{align*}
Same arguments as in~\Cref{lem:G12}
show that
$\var[\phi(x_n^m) \average{\numg_n}_v] \leq C_3$, and the boundedness of $\phi(x_n^m) \average{\numg_n}_v$ is a direct consequence of the fact that both $\phi$ and  $\numg$ are bounded. 
Then by the Bernstein inequality, we have 
\begin{eqnarray*}
\mathbb{P}\left( | \frakG_{N,2,\phi}^{m}-  \tilde{\frakG}_{N,2,\phi}^{m}-C_3\Delta t - e_v |   \geq \epsilon  \right) \leq  2 \exp  \left( - \frac{N \epsilon^2}{2 C_3 +  2\epsilon   (2 \norm{\phi}_{L_{x,v}^\infty}+ C_3 \dt)/3}\right).
\end{eqnarray*}
Then if $\epsilon$ and $\dt$ are small enough such that $2\epsilon   (2\norm{\phi}_{L_{x,v}^\infty}+ C_3 \dt)/3 < C_3$, we have
\begin{eqnarray*}
 \mathbb{P}\left( |\frakG_{N,2,\phi}^{m}-  \tilde{\frakG}_{N,2,\phi}^{m}| \geq  \epsilon + C_3  \dt + |e_v| \right) 
 \leq \mathbb{P}\left( | \frakG_{N,2,\phi}^{m}-  \tilde{\frakG}_{N,2,\phi}^{m}-C_3\Delta t - e_v |   \geq \epsilon  \right)
 \leq  2 \exp  \left( - \frac{N \epsilon^2}{3C_3}\right).
\end{eqnarray*}
% \ql{we did not prove the boundedness of the random variable. Would that be okay?}
% \lw{I add one sentence before applying Bernstein. }
\end{proof}

\section{Discretize-Then-Optimize Framework}\label{sec:DTO}
In this section, we consider the so-called Discretize-Then-Optimize (DTO) framework to compute the gradient of an RTE-constrained optimization problem. We regard the Monte Carlo method in~\Cref{alg:f-RTE} as our discretization of the forward RTE~\eqref{eqn:RTE}. 

Consider the objective functional
\begin{equation}\label{eq:J2}
    J = \iint r(x,v) f(T,x,v) \rd x \rd v\,,
\end{equation}
where the mean-field quantity $J$ of the final-time RTE solution $f(T,x,v)$ is our measured data. The choice of $J$ here is only for the convenience of derivation, and the same method proposed in this section shall apply to general objective functionals. To carry on with the notation in \Cref{subsec:fn}, the value in \eqref{eq:J2} could be approximated by the Monte Carlo quadrature
\begin{equation}\label{eq:J2_MC}
    J \approx \frac{1}{N} \sum_{n=1}^N r(x_n^M, v_n^M) =: \mathcal J  \,.
\end{equation}
Since we have already established the convergence theory of the Monte Carlo method for RTE in~\Cref{subsec:fn}, here we assume  that~\eqref{eq:f_n_approx} holds and
$$
(x_n^m, v_n^m) \sim f(t^m,x,v),\quad \forall n=1,\ldots,N, \quad m= 0,\ldots M.
$$
% up to the random and time discretization errors. 

%We denote by $\mathbb{E}_i^{k+1}$ the expectation over all the randomness in sampling $v_n^{k+1}$ given $(x_n^k, v_n^k)$ (while $x_n^{k+1} = x_n^k + \Delta t v_n^k$ is deterministic); see~\Cref{alg:f-RTE} for details. 
Given any test function $\phi(v)$ in the admissible set~\eqref{eq:test_fcts}, we have
\begin{equation} \label{eq:probab}
    \bbE_{\eta^{m+1}} \bbE_{p^{m+1}} [\phi(v) | (x_n^m,v_n^m) ] = \alpha_n^{m+1}  \phi(v_n^m) + (1-\alpha_n^{m+1} ) \frac{1}{|\Omega|}\int_\Omega \phi(\eta)d\eta, \quad m = 0,\ldots, M-1,
\end{equation}
where $\alpha_n^{m+1} = \exp(-\sigma(x_n^{m+1}) \Delta t) = \exp(-\sigma(x_n^{m} + \Delta t v_n^m) \Delta t) $. As a result, the final objective functional can be expressed as sums of conditional  expectations. We refer the reader to~\cite{yang2022adjoint} for more details regarding this technique. Next, we use the $\tilde{\bbE}^m$ notation from~\eqref{eq:expectation_n_2} for $m=0,\ldots,M-1$, and~\eqref{eq:J2} can be expressed as
\begin{equation}\label{eq:J_conditional}
    J =  \frac{1}{N} \sum_{n=1}^N \mathbb{E}_{(x_n^{m},v_n^{m})\sim f(t^m,x,v ) } \left[ \tilde{\bbE}^{m} [r(x_n^M,v_n^M)  ]  \right], \quad 0\leq m \leq M-1,
\end{equation}
as a result of the law of total expectations. Note that the above formula holds for any $m$.
% \begin{eqnarray}
% J &=&  \frac{1}{N} \sum_{i=1}^N \mathbb{E}_{(x_n^{M-1},v_n^{M-1})\sim f(t_{M-1},x,v ) } \left[ \tilde{\bbE}^{M-1} [r(x_n^M,v_n^M)  ]  \right] \nonumber
% \\
% &=& \frac{1}{N} \sum_{i=1}^N    \mathbb{E}_{(x_n^{M-2},v_n^{M-2})\sim f(t_{M-2},x,v) }   \left[ \mathbb{E}_i^{M-1} \mathbb{E}_i^{M} [r(x_n^M,v_n^M)  \big |  (x_n^{M-2}, v_n^{M-2} ) ] \right]   \nonumber
% \\
% &=& \frac{1}{N} \sum_{i=1}^N  \mathbb{E}_{(x_n^{k},v_n^{k})\sim f(t_{k},x,v ) }  \left[   \mathbb{E}_i^{k+1} \ldots \mathbb{E}_i^{M} [r(x_n^M,v_n^M)  \big |  (x_n^{k}, v_n^{k} ) ] \right] \nonumber
% \\
% &=&\frac{1}{N} \sum_{i=1}^N  \mathbb{E}_{(x_n^{0},v_n^{0})\sim f(0,x,v)}  \left[ \mathbb{E}_i^{1} \mathbb{E}_i^{2} \ldots \mathbb{E}_i^{M} [r(x_n^M,v_n^M) \big |  (x_n^{0} ,v_n^{0}) ]  \right]. \label{eq:J_conditional}
% \end{eqnarray}
% The above equations hold as a result of the law of total expectations.
We can further write $J$ into $J = \sum_{n=1}^N J_n$ following~\eqref{eq:J_conditional}, where
\begin{eqnarray}
 J_n &=&  \frac{1}{N} \mathbb{E}_{(x_n^{m},v_n^{m})\sim f(t^m,x,v ) } \left[ \mathcal{R}_n^m(x_n^m, v_n^m)  \right] \, , \label{eq:Ji_conditional} \\
%  \mathcal{R}_i^n &=&  \mathbb{E}_i^{n+1} \ldots \mathbb{E}_i^{M} \left[ r(x_n^M,v_n^M)\big |  (x_n^{m}, v_n^{n} ) \right].  
\mathcal{R}_n^m(x_n^m, v_n^m)  &=&  \tilde{\bbE}^{m} [r(x_n^M,v_n^M)  ] \, . \label{eq:Ri_def}
\end{eqnarray}

The dependence of $\mathcal{R}_n^m(x_n^m, v_n^m)$ on the coefficient function $\sigma(x)$ is through the evaluations of $\{\sigma(x_n^{m+1})\}$ where $x_n^{m+1} = x_n^m + \Delta t\, v_n^m$, which are used in the acceptance-rejection probabilities in each $\mathbb{E}_{p^{m+1}}$; see~\eqref{eq:probab}.  Note that $\mathcal{R}_n^m$ is conditioned on $(x_n^m, v_n^m)$, so it can be seen as a function of $(x_n^m, v_n^m)$ and consequently a function of  $\sigma(x_n^{m+1})$ for a given function $\sigma(x)$.  Thus, using the score function~\cite{Blei2017,mohamed2019monte,yang2022adjoint}, we can express the derivative of $\mathcal{R}_n^m$ with respect to each $\sigma(x_n^{m+1})$ as
\begin{equation}\label{eq:dRidsigma}
    \frac{\partial \mathcal{R}_n^m}{\partial \sigma(x_n^{m+1})} =  \tilde{\bbE}^{m} \left[ \frac{\partial \log \kappa(x_n^{m+1} ) }{\partial \sigma(x_n^{m+1}) }   r(x_n^M,v_n^M)  \right]  ,
\end{equation}
where the probability for the rejection sampling and its score function are
\[
\kappa(x_n^{m+1} )  = 
    \begin{cases}
   \alpha_n^{m+1} ,  & \text{if\ }v_n^{m+1} = v_n^m  \\
    1- \alpha_n^{m+1} , & \text{otherwise}
    \end{cases} \qquad    
 \frac{\partial \log \kappa(x_n^{m+1} ) }{\partial \sigma(x_n^{m+1}) }   = 
    \begin{cases}
   -\Delta t ,  & \text{if\ }v_n^{m+1} = v_n^m  \\
   \Delta t \frac{ \alpha_n^{m+1}}{1-\alpha_n^{m+1}} , & \text{otherwise}
    \end{cases} 
\]
Using the same trajectories from the MC solver for the forward RTE (see~\Cref{alg:f-RTE}), we obtain samples $\{ \widehat {\mathcal{R}}_n^m\}_{m=0}^{M-1}$ based on~\eqref{eq:Ri_def}, and the Monte Carlo gradient $\{  \widehat {\mathcal{G}}_n^m\}_{m=0}^{M-1}$ based on~\eqref{eq:dRidsigma}, respectively, where
\begin{equation}\label{eq:dRidsigma_discrete}
\widehat {\mathcal{R}}_n^m =  r(x_n^M, v_n^M),\qquad   \widehat {\mathcal{G}}_n^m  = 
     \begin{cases}
     -   r(x_n^M,v_n^M) \Delta t , & \text{if\ }  v_n^{m+1} = v_n^m , \\
      r(x_n^M,v_n^M) \dfrac{ \alpha_n^{m+1}  \Delta t }{1 - \alpha_n^{m+1} }, &\text{otherwise},
     \end{cases}
\end{equation}
for all $m = 0,\ldots,M-1$.

Based on~\eqref{eq:Ji_conditional} and~\eqref{eq:Ri_def}, we can also treat $\{\frac{1}{N} \widehat {\mathcal{R}}_n^m\}_{m=0}^{M-1}$ as samples of $J_n$. Given the same RTE particle trajectories obtained from~\Cref{alg:f-RTE}, $\{(x_n^m, v_n^m)\}_{m=0}^M$, the following holds for any $m = 0,\ldots, M-1$:
\begin{eqnarray*}
J &=& \sum_{n=1}^N J_n\quad  \stackrel{\text{sample~\eqref{eq:Ji_conditional}}}{\approx} \quad \frac{1}{N}\sum_{n=1}^N  {\mathcal{R}}_n^m \quad  \stackrel{\text{sample~\eqref{eq:Ri_def}}}\approx \quad   \frac{1}{N}\sum_{n=1}^N \widehat {\mathcal{R}}_n^m,\\
  &\stackrel{\text{based on~\eqref{eq:J2_MC}}}\approx& \mathcal J  = \frac{1}{N}\sum_{n=1}^N r(x_n^M, v_n^M).
\end{eqnarray*}
Thus, the Monte Carlo gradient  of  $\mathcal{J}$ with respect to $\sigma(x_n^{m+1})$ can be approximated by
\begin{equation}\label{eq:DTO_gradient_old}
\frac{\partial \mathcal J }{\partial \sigma(x_n^{m+1}) }\approx \frac{1}{N} \sum_{n=1}^N \frac{\partial  {\mathcal{R}}_{n}^m }{\partial \sigma(x_n^{m+1}) }  = \frac{1}{N} \frac{\partial \mathcal{R}_n^m }{\partial \sigma(x_n^{m+1}) } \approx  \frac{1}{N}\widehat{\mathcal{G}}_n^m,\quad \forall m,
\end{equation}
since $\mathcal{R}_{i}^{m}$ does not depend on $\sigma(x_n^{m+1})$ if $i\neq n$, $\forall m$. Combining \eqref{eq:DTO_gradient_old} with~\eqref{eq:dRidsigma_discrete}, we obtain a Monte Carlo gradient formula for $\frac{\partial \mathcal J }{\partial \sigma(x_n^{m+1})}$.

However, it is worth noting that 
\begin{equation}\label{eq:two gradients}
 \frac{\partial \mathcal{J}}{\partial \sigma(x_n^{m+1})}\neq \frac{\delta J}{\delta \sigma}(x_n^{m+1}).
\end{equation}
The left-hand side gradient treats $\sigma(x_n^{m+1})$ as a single parameter, and therefore the effective parameters are a collection of $MN$ number of scalars, $\{\sigma(x_n^{m+1})\}$, where $n=0,\ldots,N$ and $m = 0,\ldots M-1$. On the other hand, the right-hand side is a functional derivative with respect to the parameter function $\sigma(x)$ (the same with the OTD derivative~\eqref{eqn:frechet_derivative}) evaluated at $x= x_n^{m+1}$. We use the following example to show how to relate both sides of~\eqref{eq:two gradients}.

Consider a separate mesh grid $\{\bar{x}_j\}$ in the spatial domain, and $Q_j$ is a small neighborhood of $\bar{x}_j$ for each $j$. Consider a perturbed parameter function $\sigma(x) + \delta \sigma(x)$, where $\delta \sigma(x) = \mathds{1}_{x \in Q_j}  \epsilon$ for some small constant $\epsilon$. We then have
\begin{equation}\label{eq:perturb1}
J(\sigma  +\delta \sigma ) - J(\sigma) \approx \int_x \frac{\delta J}{\delta \sigma}(x) \delta \sigma(x) \rd{x} = \epsilon  \int_{Q_j} \frac{\delta J}{\delta \sigma}(x) \rd{x}.
\end{equation}
On the other hand, with $\mathcal{J}(\sigma)$ denoting the value~\eqref{eq:J2_MC} calculated with a given parameter function $\sigma$, we have the following based on~\eqref{eq:DTO_gradient_old}:
\begin{equation}\label{eq:perturb2}
\mathcal{J}(\sigma  +\delta \sigma ) -  \mathcal{J}(\sigma) \approx  \sum_{n=1}^N \sum_{m=0}^{M-1}  \frac{\partial \mathcal{J} }{\partial \sigma(x_n^{m+1})} \delta \sigma(x_n^{m+1})  \approx 
\epsilon\,  \frac{1}{N} \sum_{n=1}^N \sum_{m=0}^{M-1}  \mathds{1}_{x_n^{m+1} \in Q_j} \,  \widehat{\mathcal{G}}_n^m.
\end{equation}
Combining the last terms in~\eqref{eq:perturb1}-\eqref{eq:perturb2} and assuming the approximated gradient function $\frac{\delta J}{\delta \sigma}$ is piecewise constant on $\{Q_j\}$, we have 
\[
 \frac{\delta J}{\delta \sigma}(\bar x_j)\, |Q_j| \approx \int_{Q_j} \frac{\delta J}{\delta \sigma}(x) \rd{x} \approx \frac{1}{N} \sum_{n=1}^N \sum_{m=0}^{M-1}  \mathds{1}_{x_n^{m+1} \in Q_j} \, \widehat{\mathcal{G}}_i^n.
\]
% We may approximate 
% \begin{eqnarray}
% \frac{\partial J_i }{\partial \sigma(\bf x_j)}  \approx
% \frac{1}{|B_j|}  \sum_{k=1}^{M} \int_{B_j} \frac{\partial J_i}{\partial \sigma(x_n^{k})}  d\mu(x_n^{k})  
% % &=&  \frac{1}{|B_j|} \sum_{k=0}^{M-1}  \int  \int_{B_j} \mathds{1}_{x_n^k \in B_j} \left(  \frac{1}{N} \mathbb{E}_i^{1} \ldots \mathbb{E}_i^{M}  \left[ \frac{\partial \log p(x_n^{k+1} ) }{\partial \sigma(x_n^{k+1}) }   r(x_n^M,v_n^M) \Big|  (v_n^{0},x_n^{0} ) \right]  \right)f(x^0_i, v^0_i, 0) dx^0_i dv^0_i \nonumber \\
% \approx   \frac{1}{|B_j|} \sum_{k=1}^{M} \mathds{1}_{x_n^{k} \in B_j} \frac{\partial J_i }{\partial \sigma(x_n^{k})},
% \end{eqnarray}
% where the last term is obtained after sampling. 
Finally, we may approximate the gradient using values of~\eqref{eq:dRidsigma_discrete} by
\begin{eqnarray}
 \frac{\delta J}{\delta \sigma}(\bar x_j)  & \approx& \frac{1}{|Q_j|}  \frac{1}{N}  \sum_{n=1}^N \sum_{m=0}^{M-1}  \mathds{1}_{x_n^{m+1} \in Q_j}   \widehat{\mathcal{G}}_n^m\nonumber
 \\
 &=&\frac{1}{|Q_j|} \frac{ \Delta t}{N}\sum_{n=1}^N\sum_{m=1}^{M} \ \mathds{1}_{x_n^{m} \in Q_j} \  r(x_n^M, v_n^M) \ \xi_n^m,\quad \text{where\ } \xi_n^m = \begin{cases}
 -1 , & \text{if\ }  v_n^{m} = v_n^{m-1}, \\
 \frac{\alpha_n^m}{1 - \alpha_n^m } , &\text{otherwise}.
 \end{cases} \label{eq:DTO_grad}
\end{eqnarray}
Similar to the particle-based OTD approach presented in~\Cref{alg:P-OTD}, sorting is also needed in computing \eqref{eq:DTO_grad} for all particles in the long time horizon, and leads to $\mathcal{O}(NM (\log N + \log M))$ complexity. Note that this complexity can be reduced to $\mathcal{O}(NM (\log N))$ if one sorts the particle at each time step instead of doing it all at once in the end, which reduces it to the same complexity as the OTD approach.

We summarize steps of this particle-based  DTO approach for gradient calculation in~\Cref{alg:P-DTO}. 
\begin{algorithm}
\caption{The Particle-Based DTO Approach for Gradient Computation\label{alg:P-DTO}}
\begin{algorithmic}[1]
\State  Given cell centers $\{\bar{x}_j\}$ where we want to evaluate the gradient~\eqref{eqn:frechet_derivative}.
\State Implement~\Cref{alg:f-RTE} to solve the forward RTE~\eqref{eqn:RTE} and store the trajectories $\{(x_n^m,v_n^m)\}$ in memory where $m=0,\ldots,M$ and $n=1,\ldots,N$.  
\State Compute the objective function~\eqref{eq:J2_MC} based on $\numf^M$ and evaluate $\widehat{\mathcal{G}}_n^m$ following~\eqref{eq:dRidsigma_discrete} for all $m$ and $n$.
\State Evaluate the gradient at $x=\bar{x}_j$ following~\eqref{eq:DTO_grad}.
\end{algorithmic}
\end{algorithm}

\section{Numerical Examples}\label{sec:tests}
In this section, we present a few numerical tests to illustrate gradient computed by particle methods following the OTD and DTO approaches presented in~\Cref{sec:OTD} and~\Cref{sec:DTO}. We will refer to them as P-OTD and P-DTO in this section. As a reference, we will use a forward Euler scheme along with an upwind spatial discretization for both forward and adjoint equations. Details are provided in~\Cref{sec:FVM}. 
We remark that the gradient calculation based on the finite-volume method (FVM) also belongs to the OTD approach for which we use a consistent finite-volume upwind scheme (adjoint with respect to the FVM for solving~\eqref{eqn:RTE}) to discretize the adjoint equation~\eqref{eqn:g_eqn}. Thus, it also coincides with what one would get from the DTO approach, with the discretization being the FVM. 

\subsection{Inverse Problem}\label{sec:test_ip}
First, we consider a setup based on inverse data matching problems similar to the one described in~\Cref{subsec:setup}. We measure the spatial-domain density at the final time $T$, and the reference probability density function is $d(x)$. For simplicity, we choose the $L^2$-based objective functional
\begin{equation}\label{eq:J1}
    J_1(\sigma) = \frac{1}{2}\int_D |\rho_T(x) - d(x)|^2 \rd{x},\quad \rho_T(x) = \frac{1}{|\Omega|} \int_\Omega f(T,x,v)\rd{v},
\end{equation}
where other proper metrics and divergence for the probability space could also be considered. 

It is worth noting that~\eqref{eq:J1} is different from~\eqref{eqn:min}. Recall in~\Cref{sec:OTD} where we use the method of Lagrange multipliers to derive the equation for $g$ shown in~\eqref{eqn:g_eqn}. The final condition for $g$ will change with respect to the functional $J$ evaluated at the final-time RTE solution $f(T,x,v)$, while the back-propagation rule for $g$ (i.e., the PDE itself) is independent of the choice of $J$. Hence, the final-time condition of $g$ for the objective function $J_1$ should be
\[
g(T,x,v)  = -\frac{\delta J_1}{\delta f(T,x,v)} =|\Omega|^{-1} \left(d(x) - \rho_T(x)\right),
\]
which is constant in $v$.

\begin{figure}
\centering
\subfloat[P-OTD and FVM gradients]{    \includegraphics[width=.49\linewidth]{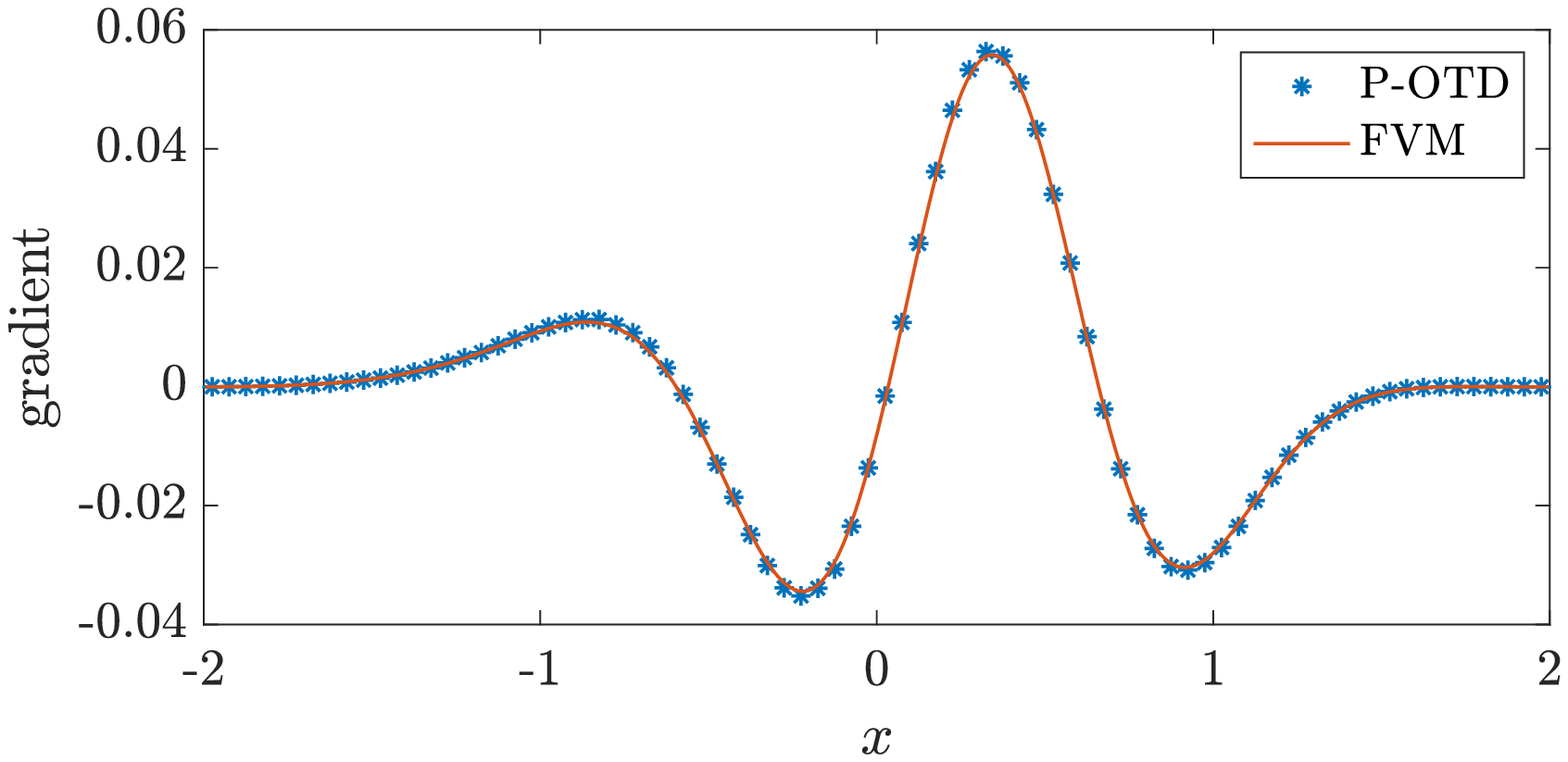}\label{fig:test-1D-1-comp}}
\subfloat[2-norm error between the two gradients]{    \includegraphics[width=.49\linewidth]{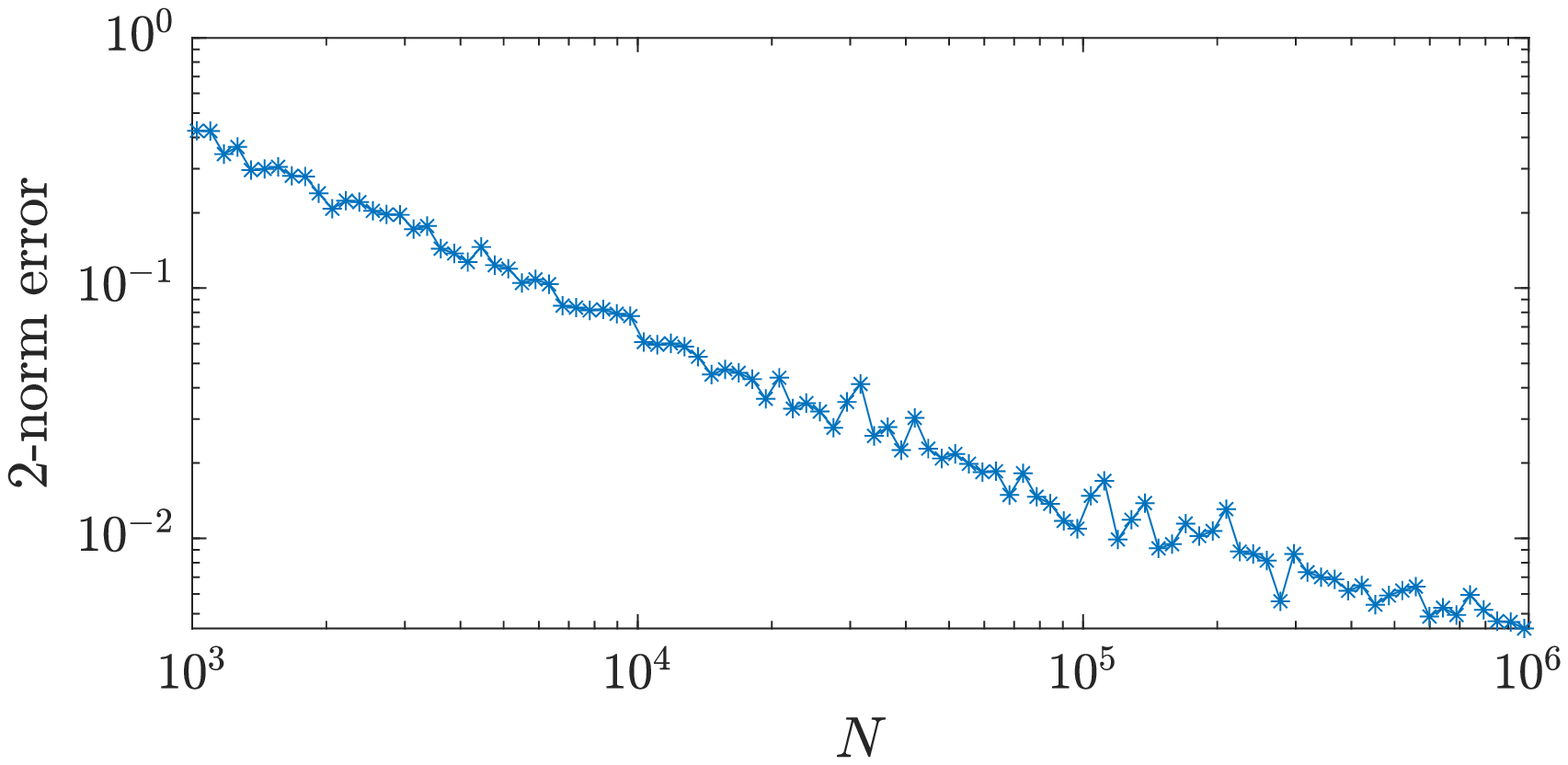}\label{fig:test-1D-1-std}}
\caption{Comparison between the gradients calculated by FVM and P-OTD for the objective function~\eqref{eq:J1} in a 1D setting. (a): Illustration of the two gradients using $N=10^6$ particles in P-OTD. (b): The 2-norm error between the two gradients with respect to the number of particles $N$ in P-OTD.}%\label{fig:test-1D-1}
\end{figure}

In a 1D setting, we consider the spatial domain $D = [-2,2]$ and the velocity domain $\Omega = [-1,1]$, with the periodic boundary condition on $D$. We run the forward RTE~\eqref{eqn:RTE} on the time interval $[0,T]$ where $T=2$. The step size $\Delta t = 0.01$ in both FVM and P-OTD. The initial distribution is 
\[
f_\In(x,v) = 2 \pi^{-\frac{1}{2}} \exp(-4 x^2)\,,
\]
which is constant in $v$, and the measurement density is
\[
d(x) =  \sqrt{5}\pi^{-\frac{1}{2}} \exp(-5|x-0.6|^2)\,.
\]
Note that $\iint f_\In(x,v) dx dv = 2$ and $\int d(x) dx = 1$. We evaluate the gradient at $\sigma(x) = 2$ for any $x\in D$. In~\Cref{fig:test-1D-1-comp}, we illustrate the gradients computed by FVM and P-OTD (using $N=10^6$ number of particles), respectively, while in~\Cref{fig:test-1D-1-std}, we show in a log-log plot the $2$-norm error between gradients computed by the two methods as the number of particles $N$ used in P-OTD increases. The error decay demonstrates the expected Monte Carlo error of $\mathcal{O}(1/\sqrt{N})$.

In a 2D setting, we consider spatial domain $D = [-1, 1]^2$ and velocity domain $\Omega = \mathbb{S}^1$, again with the periodic boundary condition on $D$. We parameterize the velocity using the polar coordinate, $v = [\cos \theta, \sin \theta]^\top$, $\theta \in [-\pi, \pi]$. The initial distribution 
\[
f_\In(x,v) = 4 \pi^{-1}\exp(-4|x|^2)\,,
\]
which is again constant in $v$, and the measurement density is
\[
d(x) =  5\pi^{-1} \exp(-5|x_1-0.3|^2-5|x_2+0.3|^2)\,,\quad x = [x_1,x_2]^\top\,.
\]
Note that $\iint f_\In(x,v) dx dv = 2\pi$ and $\int d(x) dx = 1$. We evaluate the gradient at $\sigma(x) = 2,\,\forall x\in D$. The time step $\Delta t = 0.01$ and the final time $T=0.5$. The two gradients are shown in~\Cref{fig:test-2D-1}. We used $N=10^6$ particles in the P-OTD method and plotted the averaged value from $100$ i.i.d.~runs in~\Cref{fig:test-2D-1-P-OTD} to further reduce the random error by a factor of $10$.

\begin{figure}
\centering
\subfloat[FVM gradient]{\includegraphics[width=.4\linewidth]{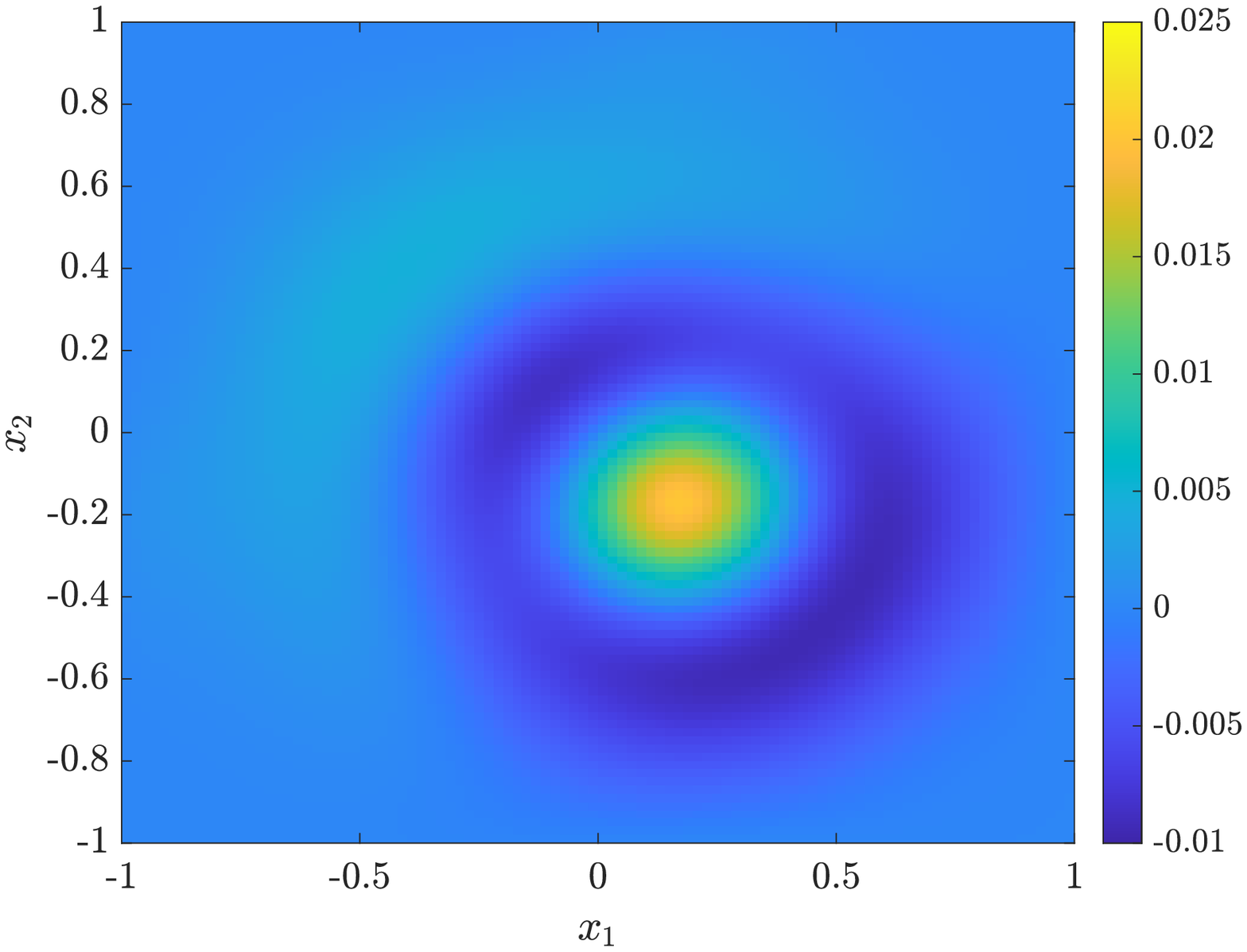}\label{fig:test-2D-1-FVM}}
\hspace{0.08\textwidth}
\subfloat[P-OTD gradient]{\includegraphics[width=.4\linewidth]{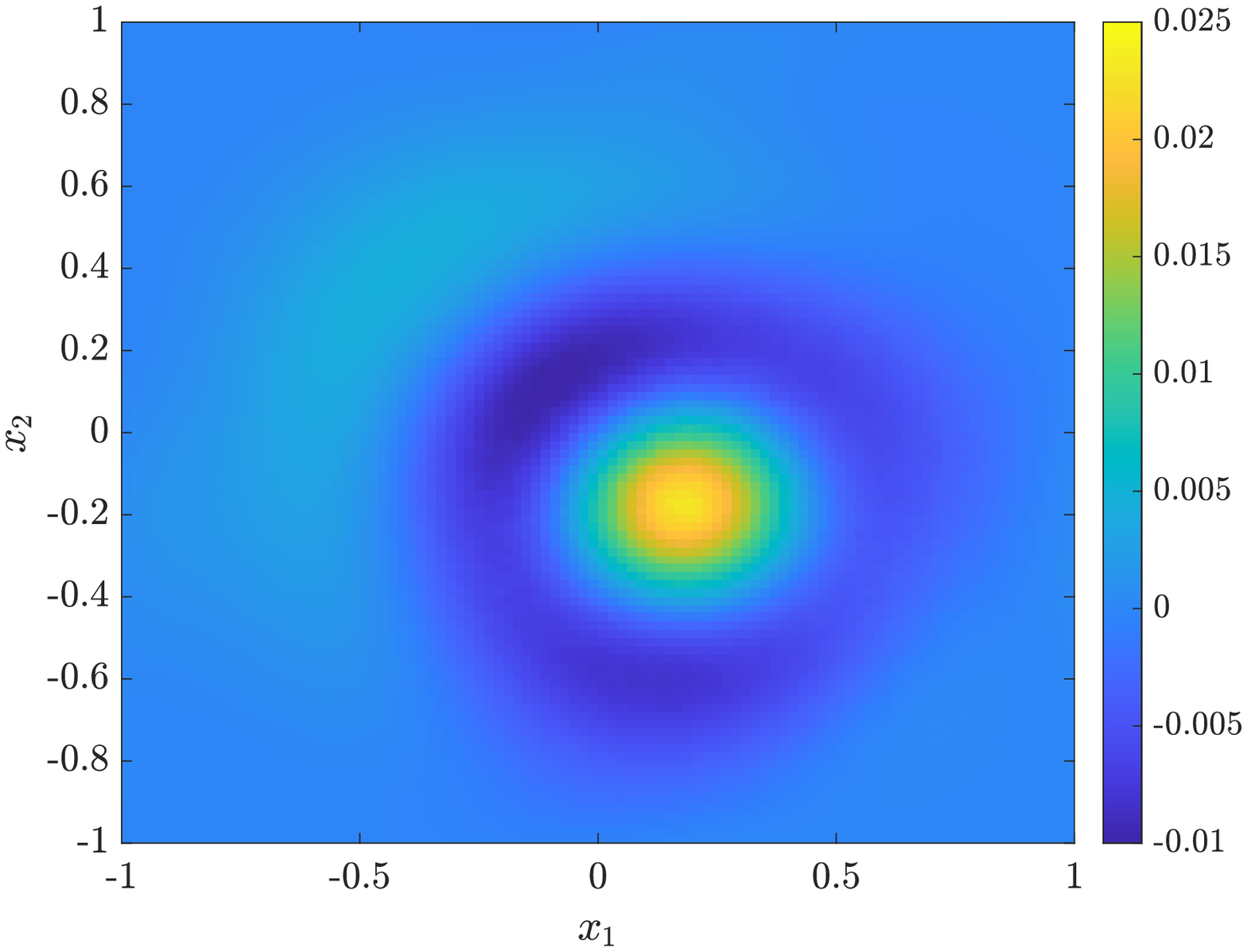}\label{fig:test-2D-1-P-OTD}}
\caption{Comparison between the gradients calculated by FVM (a) and P-OTD (b) for the objective function~\eqref{eq:J1} in a 2D setting described in~\Cref{sec:test_ip}. The number of particles in P-OTD is $N=10^6$.}
    \label{fig:test-2D-1}
\end{figure}

\subsection{Optimal Control}
In this subsection, we focus on a different objective functional~\eqref{eq:J2} which we will refer to as $J_2$.
% \begin{equation}\label{eq:J2}
%   J_2(\sigma)= \iint r(x,v) f(T,x,v) dx dv,
% \end{equation}
It is often used in optimal control or optimal design applications. Here, we measure the macroscopic quantity at the final time $T$ for function 
\[
r(x,v) = s(v) I_{E}(x),
\]
where $I_{E}(x)$ is an smooth approximation to the indicator function $\mathds{1}_E(x) = \mathds{1}_{x\in E}$ for a chosen measurement set $E\subset D$; see~\Cref{fig:IE} for illustrations of $I_{E}(x)$ in 1D and 2D settings. Based on $J_2$, the final condition of the adjoint variable $g$ should be 
$$g(T,x,v) = -\frac{\delta J_2}{\delta f(T,x,v)} = -r(x,v).
$$
The initial conditions for the RTE remain the same as the examples in~\Cref{sec:test_ip}. 

We use three methods to compute the gradient of $J_2$ at $\sigma(x) = 2$: FVM, P-OTD and P-DTO. The first two methods follow the OTD approach since they discretize the forward RTE~\eqref{eqn:RTE} and the continuous adjoint equation~\eqref{eqn:g_eqn}, whose solutions are plugged into~\eqref{eq:OTD_grad} for gradient calculation. On the other hand, the P-DTO method derived in~\Cref{sec:DTO} follows the DTO approach and it does not solve the adjoint equation~\eqref{eqn:g_eqn}. Using the solution based on~\Cref{alg:f-RTE} to the forward RTE~\eqref{eqn:RTE} and the history of rejection samplings therein, the gradient can be approximated by formula~\eqref{eq:DTO_grad}.

In 1D, we set $s(v) = v^2$, $T=0.5$ and $\Delta t = 0.005$. \Cref{fig:test-1D-2} illustrates the comparison among the three methods. In the 2D case, we consider the spatial domain $D = [-1.5, 1.5]^2$ and the velocity domain $\Omega = \mathbb{S}^1$, and set $s(v) = |v_1|^2$ where $v = [v_1,v_2]^\top$. The total simulation time $T = 0.2$ while $\Delta t = 0.005$. We show the gradients calculated from the three methods in~\Cref{fig:test-2D-2}. We further reduce the variances in the P-OTD gradient by taking its averaged value after $100$ i.i.d.~runs. We also average the P-DTO gradient based on $300$ i.i.d.~runs.

\begin{figure}
\centering
\subfloat[1D setting]{    \includegraphics[width=.45\linewidth]{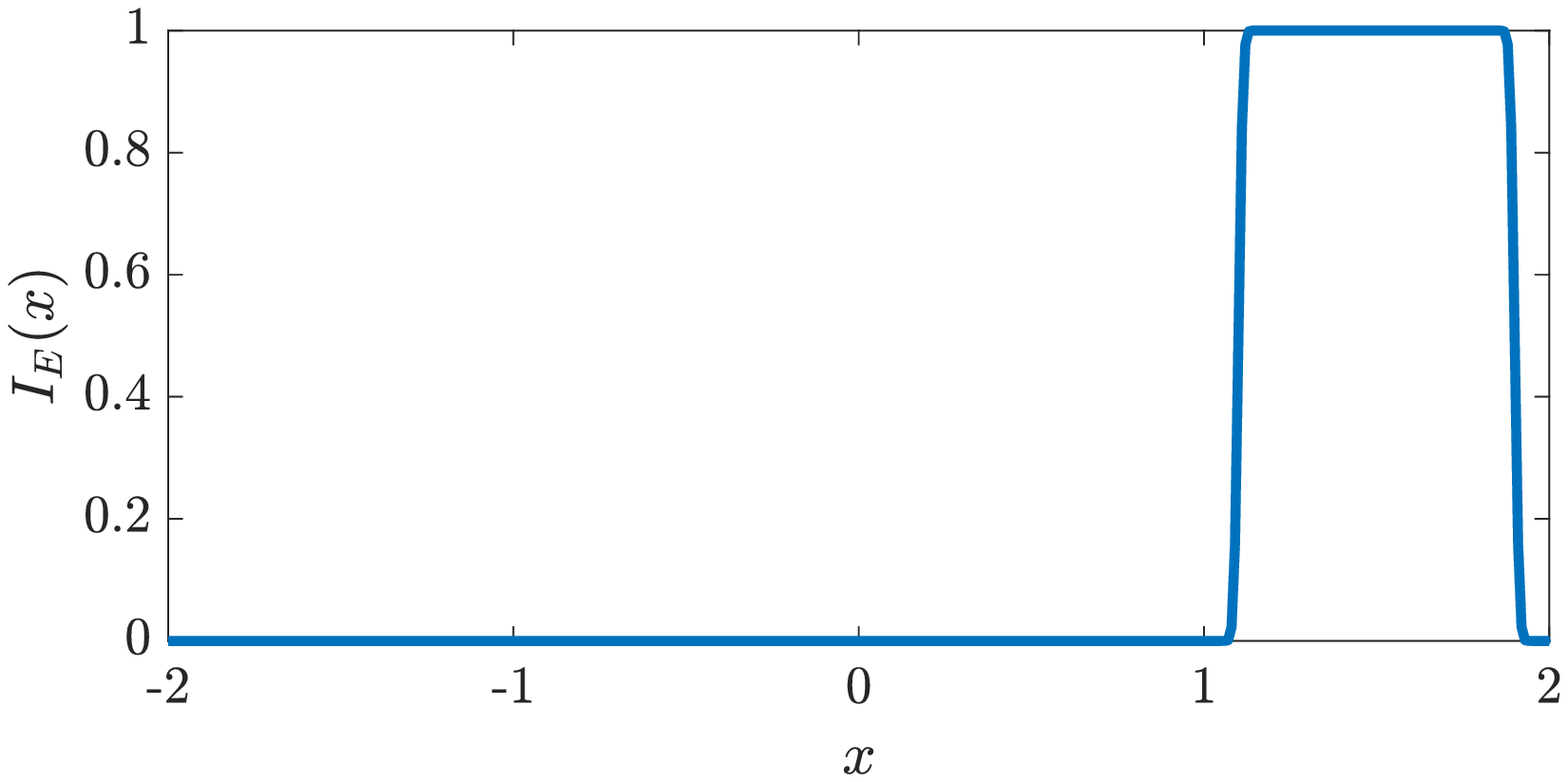}\label{fig:IE-1d}}
\hspace{0.06\linewidth}
\subfloat[2D setting]{    \includegraphics[width=.45\linewidth]{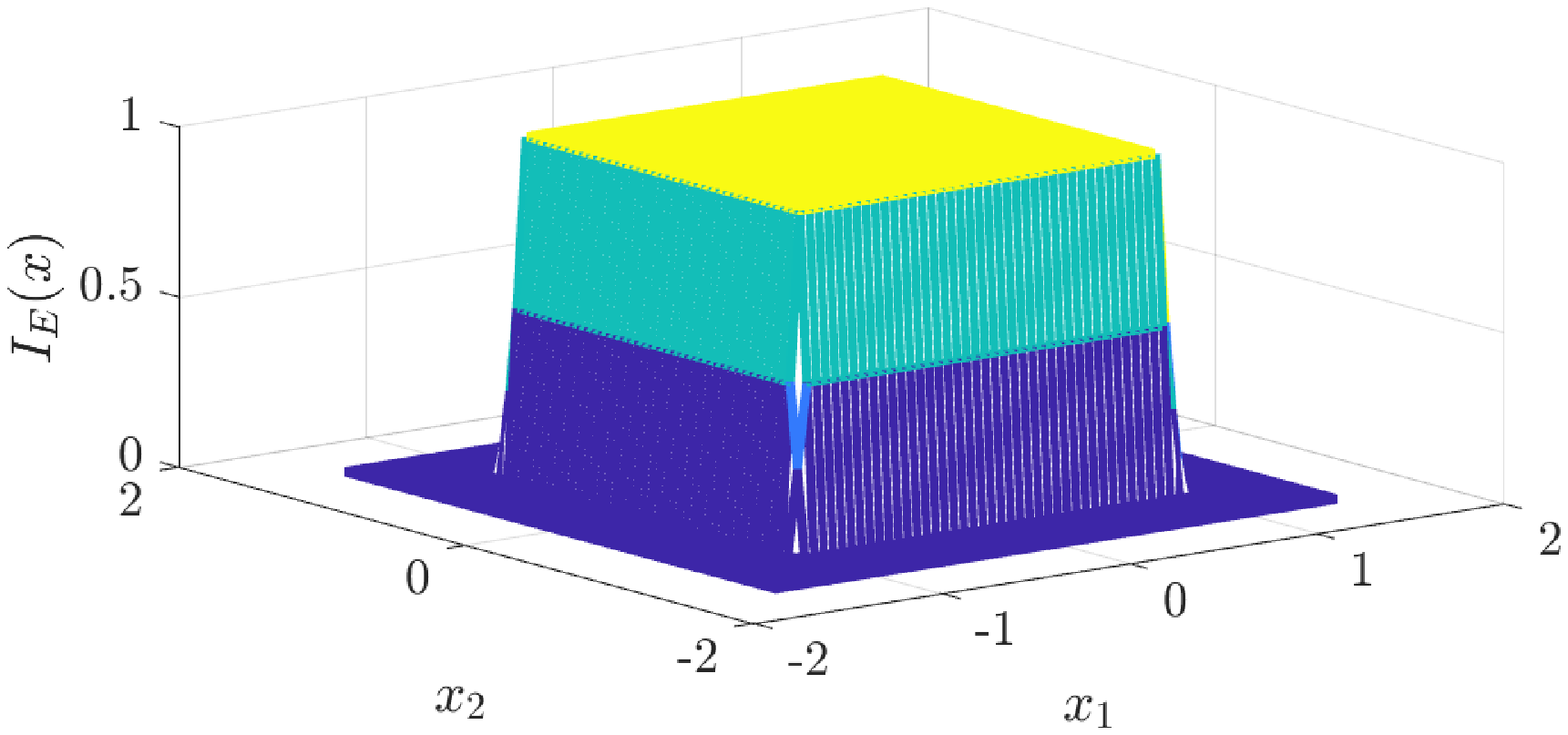}\label{fig:IE-2d}}
\caption{Illustration of approximated indicator function $I_E(x)$ in 1D (a) and 2D (b) settings.\label{fig:IE}}
\end{figure}

\begin{figure}
\centering
\subfloat[FVM vs. P-OTD]{    \includegraphics[width=.45\linewidth]{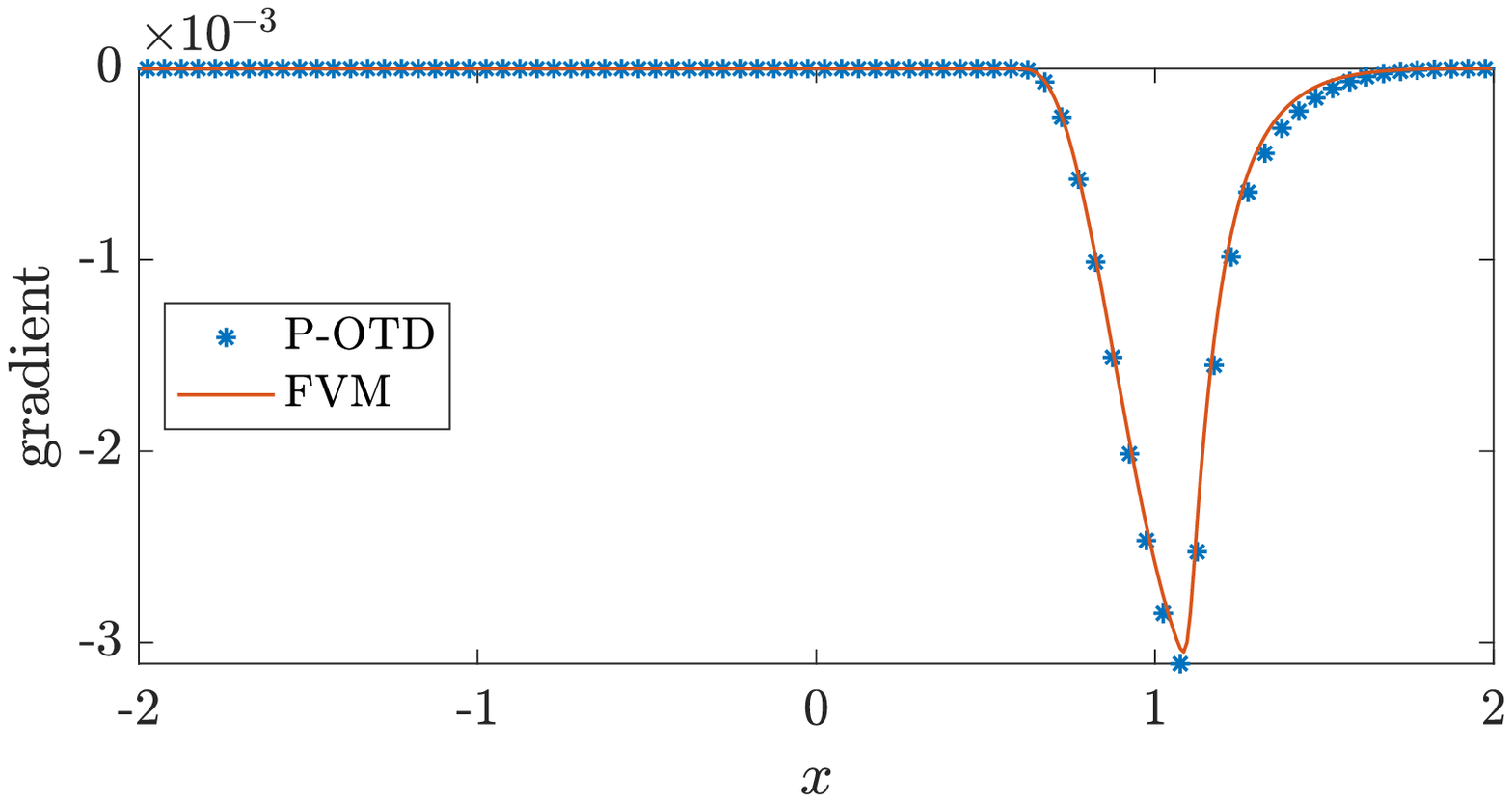}\label{fig:test-1D-2-1}}
\hspace{0.06\linewidth}
\subfloat[FVM vs. P-DTO]{    \includegraphics[width=.45\linewidth]{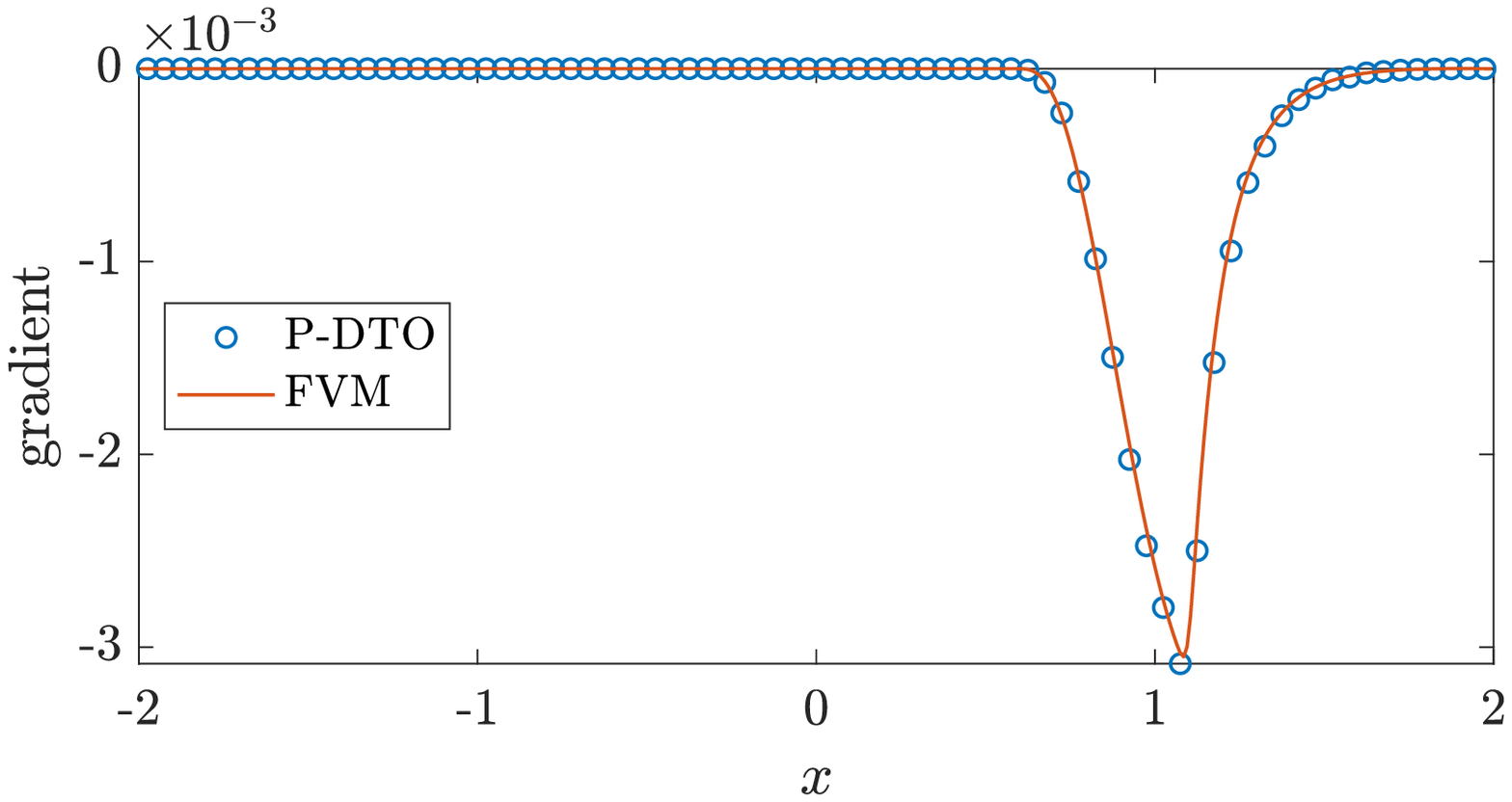}\label{fig:test-1D-2-2}}
\caption{Gradient comparisons among FVM, P-OTD and P-DTO methods in a 1D setting for the objective function $J_2$. The number of particles in both particle methods is $N=10^6$.\label{fig:test-1D-2}}
\end{figure}

\begin{figure}
\centering
\subfloat[P-OTD]{    \includegraphics[width=.32\linewidth]{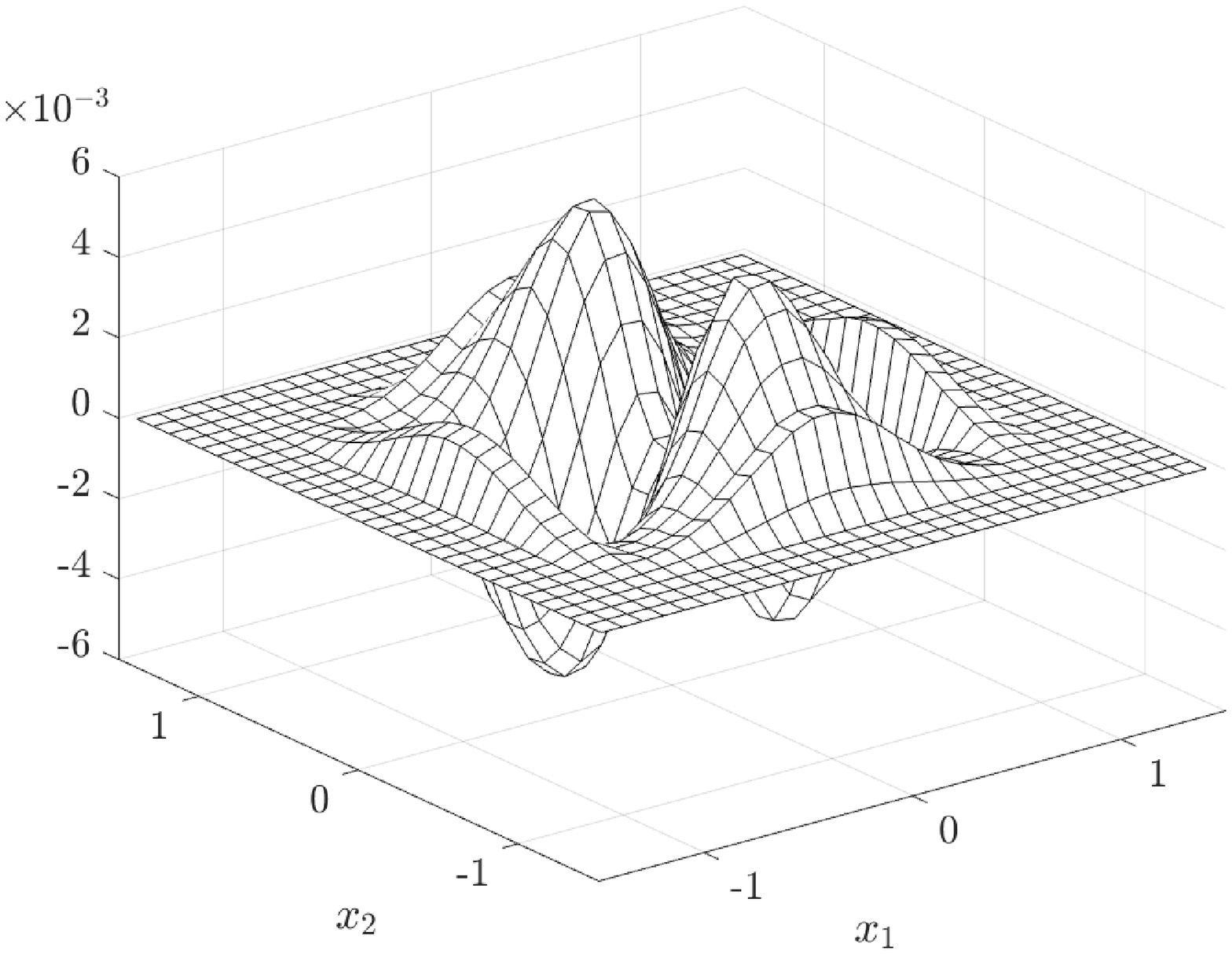}\label{fig:test-2D-2-P-OTD}}
\subfloat[P-OTD]{    \includegraphics[width=.32\linewidth]{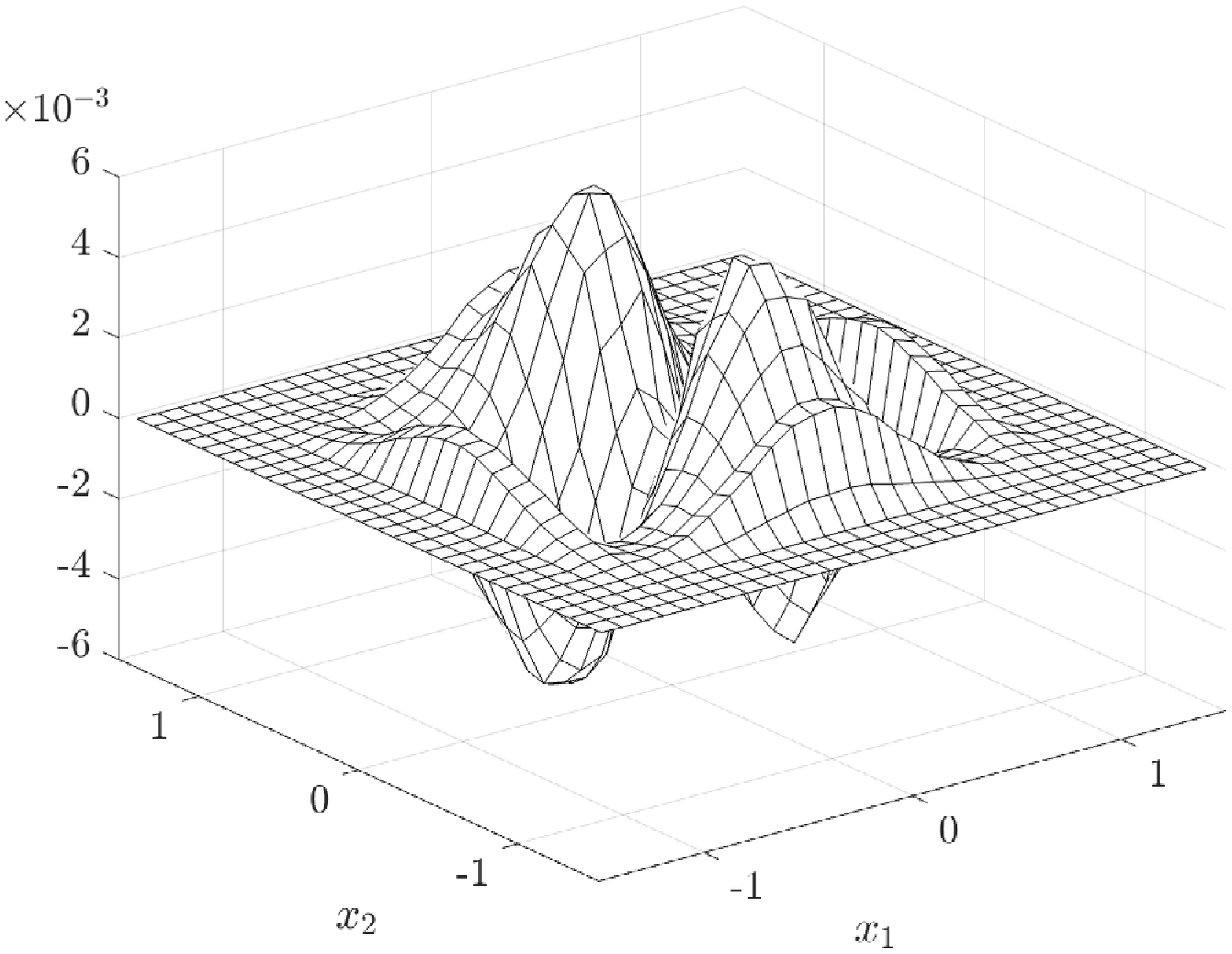}\label{fig:test-2D-2-P-DTO}}
\subfloat[FVM]{\includegraphics[width=.32\linewidth]{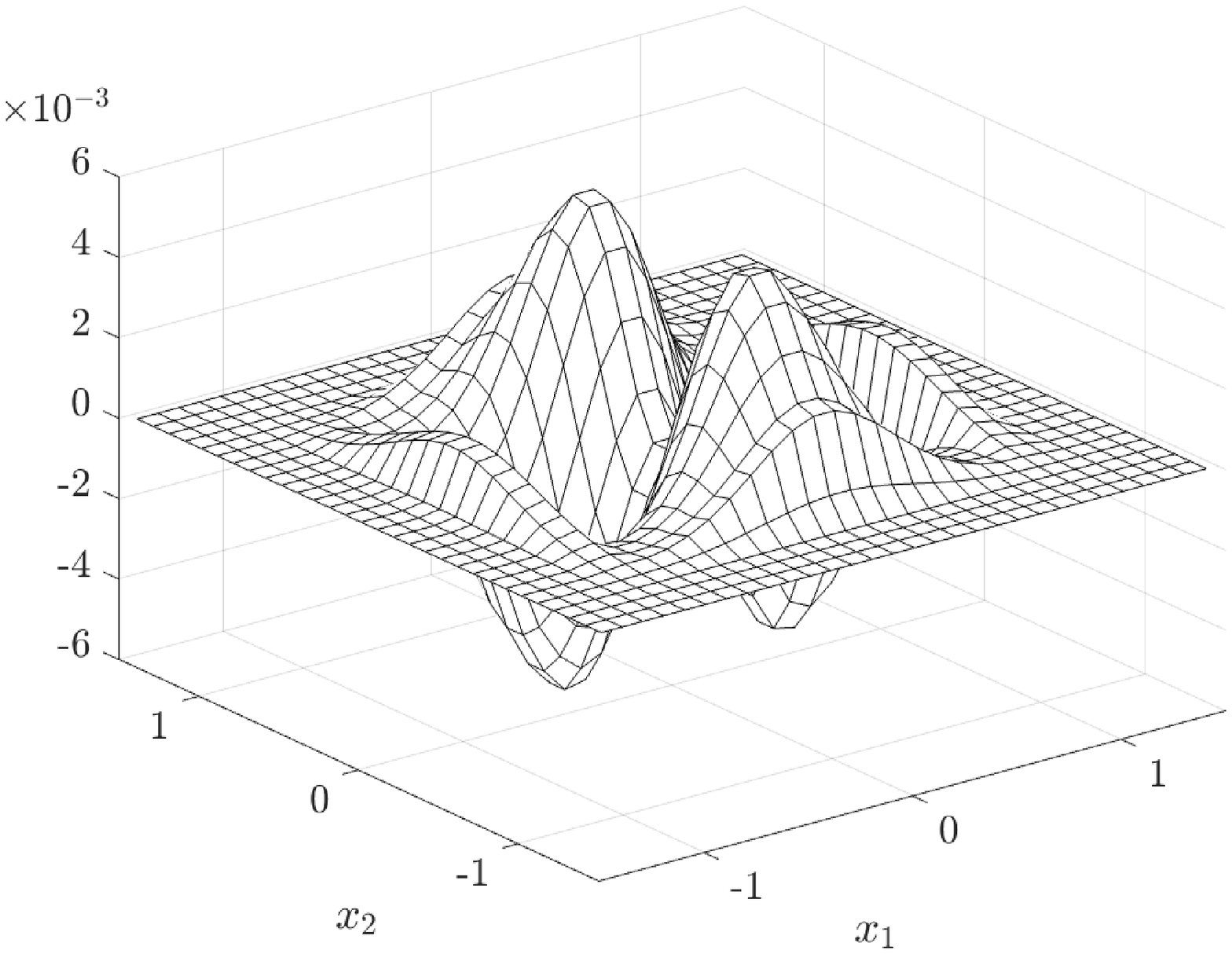}\label{fig:test-2D-2-FVM}}
\caption{Gradient comparisons among FVM, P-OTD and P-DTO methods in a 2D setting for the objective function $J_2$. The number of particles in both particle methods is $N=10^6$.\label{fig:test-2D-2}}
\end{figure}

%\section{Conclusions} \label{sec:conclusions}

\bibliography{references}
\bibliographystyle{siamplain}

\appendix
\section{The Finite Volume Scheme}\label{sec:FVM}
We summarize the finite volume scheme used to compute the reference solution. As an illustration, we only consider the 1D case. Denote
\[
\fij^m \approx f(t^m, x_i, v_j)\,, \qquad 0\leq m \leq M, \quad 1\leq j \leq N_v\,,
\]
as the numerical approximation, and let  $\dx$, $\dt$ and  $\dv $ be the corresponding mesh size in $x$, $t$ and $v$, respectively. We consider the time domain $[0,T]$, the spatial domain $D = [-2,2]$ and the velocity domain $\Omega = [-1,1]$. Then $M \dt = T$, and $\dv N_v = |\Omega| = 2$, where $T$ is the final time. We have the following discretization for \eqref{eqn:RTE}:
\begin{align*}
    \fij^\np - \fij^m + \frac{\dt}{\dx} v_j^{+}(\fij^m - \fijm^m) 
    + \frac{\dt}{\dx} v_j^{-}(\fijp^m - \fij^m) = \sigma_i \dt \left(|\Omega|^{-1}\average{\fij^m} - \fij^m \right)
    \,,
\end{align*}
for $0\leq m \leq M -1$ and $1\leq j \leq N_v$, 
with initial condition 
\begin{align*}
    \fij^0 = f_\In(x_i, v_j)\,.
\end{align*}
Here, $v_j = -1+ (j-1/2) \dv$ representing the cell center, and $v^+ = \max\{v,0\}$, $v^- = \min\{v,0\}$. The average in $v$ is computed by a simple midpoint rule: $\average{\fij^n} = \sum_{j=1}^{N_v} \fij^n \dv$. 

Writing down the discrete version of the objective function (taking \eqref{eq:J1} as an example)
\begin{align*}
J = & \frac{\dx }{2} \sum_i \left(d_i - |\Omega|^{-1} \average{f^M_{i,j}}\right)^2 + \dx  \dv \sum_{i} \sum_{m=1}^{M-1} \sum_{j=1}^{N_v}  g_{i,j}^{m+1}\\
& \left(  \fij^\np - \fij^m + \frac{\dt}{\dx} v_j^{+}(\fij^m - \fijm^m) 
+ 
 \frac{\dt}{\dx} v_j^{-}(\fijp^m - \fij^m) - \sigma_i \dt \left(\frac{\average{\fij^m}}{|\Omega|} - \fij^m \right) \right), 
\end{align*}
then the optimality condition leads to the following discretization for \eqref{eqn:g_eqn}:
\begin{align*}
    \gij^m - \gij^\np + \frac{\dt}{\dx} v_j^{+}(\gij^m - \gijp^m) 
    + \frac{\dt}{\dx} v_j^{-}(\gijm^m - \gij^m) = \sigma_i \dt (|\Omega|^{-1}\average{\gij^\np} - \gij^\np)
    \,,
\end{align*}
for $M-1\geq m \geq 0, 1\leq j \leq N_v$, with final condition depending on the objective function:
\begin{align*}
    \gij^M = |\Omega|^{-1} \left( d_i - |\Omega|^{-1} \average{\fij^M}\right) \,,
\end{align*}
where $|\Omega|$ is the Lebesgue measure of the velocity domain. We use periodic boundary condition in $x$ throughout the whole calculation. Then it is straightforward to calculate the gradient  as 
% \begin{align*}
%     \frac{dJ}{d\sigma_i} = \sum_{n=1}^{M-1} \sum_{j=1}^{N_v} \fij^n \left(  \gij^\np - |\Omega|^{-1} \average{\gij^\np}\right)  \dv \dt\,.
% \end{align*}
\begin{align*}
    \frac{\delta J}{\delta \sigma_i}  = \dx \dv \dt \sum_{m=1}^{M-1} \sum_{j=1}^{N_v} \fij^m \left( \gij^\np - |\Omega|^{-1} \average{\gij^\np} \right) \,.
\end{align*}
Similar to~\eqref{eq:two gradients}, $\frac{\delta J}{\delta \sigma_i}$ above is not the same as $ \frac{\delta J}{\delta \sigma} (x_i)$, but we can relate these two following similar procedures in~\eqref{eq:perturb1}-\eqref{eq:perturb2}. Assuming that the gradient $\frac{\delta J}{\delta \sigma} (x)$ is piecewise constant over the spatial cells $\{[x_i - \dx/2,x_i + \dx/2 )\}$, we have that
\[
\frac{\delta J}{\delta \sigma} (x_i) \approx \frac{1}{\dx}   \frac{\delta J}{\delta \sigma_i}  = \dv \dt \sum_{m=1}^{M-1} \sum_{j=1}^{N_v} \fij^m \left( \gij^\np - |\Omega|^{-1} \average{\gij^\np} \right) \,.
\]

\end{document}